\def\smallddots{\mathinner{\raise7pt\hbox{.}\raise4pt\hbox{.}\raise1pt\hbox{.}}}
\def\smallsdots{\mathinner{\raise1pt\hbox{.}\raise4pt\hbox{.}\raise7pt\hbox{.}}}
\newtheorem{theorem}{Theorem}
\newtheorem{lemma}[theorem]{Lemma}
\newtheorem{corollary}[theorem]{Corollary}
\theoremstyle{definition}
\newtheorem{definition}[theorem]{Definition}
\newtheorem{example}[theorem]{Example}
\newtheorem{remark}[theorem]{Remark}
\DeclareMathOperator{\diag}{diag}
\DeclareMathOperator{\rank}{rank}
\begin{document}
\title{Transformations of Matrix Structures Work Again  II
\thanks {Some results of this paper have been presented at the 
18th Conference of the International Linear Algebra
Society (ILAS'2013), Providence, RI, 2013
and at the 15th Annual Conference on Computer Algebra in
Scientific Computing (CASC '2003), September 9--13, 2013, Berlin, Germany}}
\author{Victor Y. Pan$^{[1, 2],[a]}$ 
\and\\
$^{[1]}$ Department of Mathematics and Computer Science \\
Lehman College of the City University of New York \\
Bronx, NY 10468 USA \\
$^{[2]}$ Ph.D. Programs in Mathematics  and Computer Science \\
The Graduate Center of the City University of New York \\
New York, NY 10036 USA \\
$^{[a]}$ victor.pan@lehman.cuny.edu \\
http://comet.lehman.cuny.edu/vpan/  \\
} 
\date{}
\maketitle


\begin{abstract}
Matrices with the structures of  Toep\-litz, Han\-kel, Van\-der\-monde and Cauchy types are omnipresent in modern computations in Sciences, Engineering and Signal and Image  Processing. The four matrix classes have distinct features, but in  \cite{P90} we showed that  Van\-der\-monde and Hankel multipliers transform all these structures into each other and proposed to employ this property in order to extend any successful algorithm that inverts matrices of one of these four classes to inverting matrices with the structures of the three other types. The power of this approach was widely recognized later, when novel numerically stable algorithms solved nonsingular Toeplitz linear systems of equations in quadratic (versus classical cubic) arithmetic time based on transforming Toeplitz into Cauchy matrix structures. More recent papers combined such a transformation with a link of the Cauchy matrices to the Hierarchical Semiseparable matrix structure, which is a specialization of matrix representations employed by the Fast Multipole Method. This produced numerically stable algorithms that approximated the solution of a nonsingular Toeplitz linear system of equations in nearly linear arithmetic time. We first revisit the successful method of structure transformation, covering it comprehensively. Then we analyze the latter efficient approximation algorithms for Toeplitz linear systems and extend them to approximate the products of Van\-der\-monde and Cauchy matrices by a vector and the solutions of Van\-der\-monde and Cauchy linear systems of equations where they are nonsingular and well conditioned. We decrease the arithmetic cost of the known  numerical approximation algorithms for these tasks from quadratic to nearly linear, and similarly for the computations with the matrices of a more general class having structures of Van\-der\-monde and Cauchy types and for polynomial and rational evaluation and interpolation. We also accelerate a little further the known numerical approximation algorithms for a nonsingular Toeplitz or Toeplitz-like linear system by employing distinct transformations of matrix structures, and we briefly discuss some natural research challenges, particularly some promising applications of our techniques to high precision computations.
\end{abstract}

\paragraph{Keywords:} 
Transformations of matrix structures,  
Van\-der\-monde matrices,
Cauchy matrices,
Multipole method,
HSS matrices, Toeplitz matrices

\paragraph{AMS Subject Classification:}
15A04, 15A06, 15A09, 47A65, 65D05, 65F05, 68Q25

\section{Introduction}\label{s1}

\begin{table}[ht]
\caption{Four classes of structured matrices}
\label{t1}.
\begin{center}
\renewcommand{\arraystretch}{1.2}
\begin{tabular}{c|c}
Toeplitz matrices $T=\left(t_{i-j}\right)_{i,j=0}^{n-1}$&Hankel matrices $H=\left(h_{i+j}\right)_{i,j=0}^{n-1}$ \\
$\begin{pmatrix}t_0&t_{-1}&\cdots&t_{1-n}\\ t_1&t_0&\smallddots&\vdots\\ \vdots&\smallddots&\smallddots&t_{-1}\\ t_{n-1}&\cdots&t_1&t_0\end{pmatrix}$&$\begin{pmatrix}h_0&h_1&\cdots&h_{n-1}\\ h_1&h_2&\smallsdots&h_n\\ \vdots&\smallsdots&\smallsdots&\vdots\\ h_{n-1}&h_n&\cdots&h_{2n-2}\end{pmatrix}$ 
\\ \hline 


Van\-der\-monde matrices 
$V=V_{\bf s}=\left(s_i^{j}\right)_{i,j=0}^{n-1}$&Cauchy matrices $C=C_{\bf s,t}=\left(\frac{1}{s_i-t_j}\right)_{i,j=0}^{n-1}$ \\
$\begin{pmatrix}1&s_1&\cdots&s_1^{n-1}\\ 1&s_2&\cdots&s_2^{n-1}\\ \vdots&\vdots&&\vdots\\ 1&s_{n}&\cdots&s_{n}^{n-1}\end{pmatrix}$&$\begin{pmatrix}\frac{1}{s_1-t_1}&\cdots&\frac{1}{s_1-t_{n}}\\ \frac{1}{s_2-t_1}&\cdots&\frac{1}{s_2-t_{n}}\\ \vdots&&\vdots\\ \frac{1}{s_{n}-t_1}&\cdots&\frac{1}{s_{n}-t_{n}}\end{pmatrix}$
\end{tabular}
\end{center}
\end{table}

Table \ref{t1} displays four classes of most popular structured matrices,
which
are omnipresent in modern computations for Sciences, Engineering, and Signal
and Image Processing and
which have been naturally extended to larger classes of matrices, 
$\mathcal T$, $\mathcal H$, $\mathcal V$, and $\mathcal C$,
having structures of Toep\-litz, Hankel,
Van\-der\-monde and Cauchy types,
 respectively. 
Such matrices 
can be readily expressed via
their displacements of small ranks, which implies
their further attractive properties:

\begin{itemize}\itemsep=-0.6mm\vspace{-2mm}
\item
Compressed representation of a matrices as well as their products and inverses through 
a small number of parameters 
\item
Multiplication by vectors in nearly linear arithmetic time
\item
Solution of nonsingular
linear systems of equations with these matrices
in quadratic or nearly linear arithmetic time
\vspace{-2mm}
\end{itemize}

Highly successful research and implementation work based
on these properties has been continuing for more than three decades.
We follow \cite{P90}
and employ structured matrix multiplications to
 transform the four matrix structures
into each other. For example, 
$\mathcal T \mathcal H=\mathcal H\mathcal T=\mathcal H$, 
$\mathcal H\mathcal H=\mathcal T$, whereas $V^TV$ is a Hankel matrix.
The paper \cite{P90} showed 
that {\em such techniques enable one
  to extend any successful algorithm for  
the
inversion 
of the matrices of any
  of the four classes 
 $\mathcal T$, $\mathcal H$, 
$\mathcal V$, and $\mathcal C$
to the  matrices of the  
three other classes, and similarly for the solution of linear systems
of equations}.
We cover this approach comprehensively
and simplify its presentation versus  \cite{P90} because 
we  
employ Sylvester's displacements $AM-MB$, 
versus Stein's displacements $M-AMB$ in \cite{P90},
and apply the machinery of operating with them from \cite{P00} and 
 \cite[Section 1.5]{P01}. 
We study quite simple structure transforms, 
but they have surprising power where
they link together
  matrix classes
having distinct
features.

For example, row and column interchanges destroy Toeplitz and Hankel
but not Cauchy matrix structure, and
 \cite{GKO95} and \cite{G98} obtained numerically stable solution 
of  Toeplitz and Hankel linear systems without pivoting 
by transforming the inputs into
Cauchy-like  matrices. The resulting algorithms run in
 quadratic arithmetic time versus classical cubic. 
Like  \cite{GKO95} and \cite{G98} the papers 
\cite{MRT05},  \cite{CGS07}, \cite{XXG12},   and  \cite{XXCB}
reduced the solution of a nonsingular  Toeplitz linear system
of $n$ equations to computations with
a special Cauchy matrix $C=C_{\bf s,t}$ whose 
 $2n$ knots $s_0,t_0,\dots,s_{n-1},t_{n-1}$ 
are equally spaced on the unit circle 
$\{z:~|z|=1\}$, but then the authors
applied a variant of
fast numerically stable FMM to compute
 HSS compressed approximation of this matrix and consequently to yield
approximate solution of the original task in 
 nearly linear arithmetic time.
``HSS" and ``FMM" are the acronyms for ``Hierarchically Semiseparable"
and ``Fast Multipole Method", respectively.
``Historically HSS representation
is just a special case of the representations commonly exploited in the 
FMM literature" \cite{CDGLP06}. 
We refer the reader to the papers 
 \cite{CGR98}, 
 \cite{GR87},  \cite{DGR96}, 
 \cite{BY13} and the bibliography therein
on FMM and to
\cite{B10}, \cite{CDGLP06},
\cite{CGS07},
\cite{DV98},
 \cite{GKK85}, \cite{T00},  \cite{X13},
  \cite{XXG12}, \cite{XXCB},
 and the bibliography therein
on HSS matrices and their link to FMM.

We analyze the fast
numerically stable algorithms
of \cite{MRT05},  \cite{CGS07}, \cite{XXG12},   and  \cite{XXCB},
which treat the cited special Cauchy matrix
and extend these algorithms to treat 
 a quite general subclass of Cauchy  matrices,
which includes  {\em CV} and {\em CV-like} matrices,
 obtained by FFT-based transforms from
Van\-der\-monde matrices and their transposes. 
The known approximation algorithms run in quadratic  arithmetic time 
even for multiplication of these matrices by a vector, whereas 
we yield 
 nearly linear arithmetic time both for 
that task and  computing 
approximate solutions of linear systems of equations with
these matrices  where they are nonsingular and well conditioned. 
The solutions are immediately extended 
to the computations with Van\-der\-monde matrices and 
to polynomial and rational evaluation and interpolation. 
In the cases of solving linear systems and interpolation,
the power of our numerical algorithms is limited
because 
only Van\-der\-monde matrices of a  narrow although important subclass 
   are well conditioned
(see \cite{GI88}), and we prove a similar property for the
CV matrices (see  our Remark \ref{renmr}).

In Section \ref{stls} we employ another kind of transformations of matrix structures, which 
we call {\em functional}, to extend the power of the fast numerically stable 
algorithm of \cite{DGR96}, proposed for polynomial evaluation at a set of real knots. This enables
us to accelerate a little further the approximation algorithms of 
 \cite{MRT05},  \cite{CGS07}, \cite{XXG12},   and  \cite{XXCB} 
for Toeplitz linear systems.
 By means of other transformations of matrix structures
we extend the approximation algorithms from CV matrices to  
Cauchy and Cauchy-like matrices
with arbitrary sets of knots, but point out potential numerical limitations
of these results. 
At the end of the paper we discuss
some specific directions to the acceleration of our proposed 
approximation algorithms by logarithmic factor and 
to a more significant acceleration
in the case of high precision computations. Further research could reveal 
new transformations of matrix structures 
with significant algorithmic applications. 

Besides new demonstration of the power of the transformation techniques, 
our   analysis of the 
approximation algorithms of
 \cite{MRT05},  \cite{CGS07}, \cite{XXG12},   and  \cite{XXCB}
can be of some technical interest because instead of the special Cauchy matrix
used in these papers we  cover CV matrices, which
have 
one 
of their two knot sets $\{s_0,\dots,s_{n-1}\}$ or $\{t_0,\dots,t_{n-1}\}$
equally spaced on the unit circle 
$\{z:~|z|=1\}$,
whereas the remaining $n$ knots 
are arbitrary. We 
still obtain  
a desired HSS representation by partitioning the knots
according to the angles in their
polar coordinates. Our study provides
 a new insight into the subject, and
 we prove that the admissible blocks
of the $n\times n$ HSS 
$\epsilon$-approximations of CV matrices have ranks of order $O(\log (n/\epsilon))$,
which decrease to $O(\log (1/\epsilon))$ in the case of
approximation of the special Cauchy matrix linked to  Toeplitz  
inputs.

We organize our presentation as follows.
After recalling some definitions and basic facts
on general  matrices  and on four classes of structured matrices
$\mathcal T$, $\mathcal H$, $\mathcal V$, and $\mathcal C$
in the next three sections, we cover in some detail the 
transformations of matrix structures among these classes
in Section \ref{sdtr}. That study is not used in the second part
of our paper 
(Sections  \ref{shss}--\ref{shssapr}), where
we approximate Cauchy matrices by HSS matrices.
Namely we define HSS matrices in Section \ref{shss},
estimate numerical ranks of Cauchy and Cauchy-like
 matrices of a large class
in Section \ref{snrqs}, and extend these estimates to 
compute the HSS type approximations of these matrices in 
Section \ref{shssapr}. In Section \ref{svmv}
we combine the results of the two parts
as well as some functional transformations of matrix structures
to devise approximation
algorithms for computations with
 various structured matrices.
 We conclude the paper with Section \ref{scnc}.
For simplicity we assume square structured matrices throughout,
but our study can be readily extended to the case 
of rectangular matrices (cf. \cite{Pa}).

\section{Some definitions and basic facts}\label{sdef}

Hereafter ``flop" stands for ``arithmetic operation
with real or complex numbers"; 
 the concepts ``large", ``small", ``near", ``close", ``approximate", 
``ill  conditioned" and ``well conditioned" 
are 
quantified in the context. 
Next we recall and extend some basic definitions  
and facts 
on computations with general and structured  matrices
(cf. \cite{GL96}, \cite{S98}, \cite{P01}).


\subsection{General matrices}\label{sgen}



$M=(m_{i,j})_{i,j=1}^{m,n}$ is an $m\times n$ matrix,
$M^T$ and $M^H$ are its transpose 
and  Hermitian 
transpose,
respectively. 
$M^{-T}=(M^T)^{-1}=(M^{-1})^T$.
$(B_0~|~\dots~|~B_{k-1})$ and $(B_0~\dots~B_{k-1})$ denote a $1\times k$ block matrix with the blocks $B_0,\dots,B_{k-1}$.
$\Sigma= \diag(\Sigma_0,\dots,\Sigma_{k-1})=\diag(\Sigma_j)_{j=0}^{k-1}$ is a $k\times k$ 
block diagonal matrix with 
the diagonal blocks $\Sigma_0,\dots,\Sigma_{k-1}$, possibly rectangular. 
${\bf e}_1,\dots,{\bf e}_n$ are the $n$ coordinate vectors
of a dimension $n$.
${\bf s}=(s_j)_{j=0}^{n-1}=\sum_{i=0}^{n-1}s_i{\bf e}_i$.
 $D_{\bf s}=\diag({\bf s})=\diag(s_i)_{i=0}^{n-1}$.
  $I=I_n=({\bf e}_1~|~\dots~|~{\bf e}_n)$
and  $J=J_n=({\bf e}_n~|~\dots~|~{\bf e}_1)$ are
the $n\times n$ identity and 
  reflection matrices, respectively.
 $J=J^T=J^{-1}$.

{\bf Preprocessors.} For three nonsingular matrices $P$, $M$, and $N$
 and a vector ${\bf b}$,
it holds that
\begin{equation}\label{eqprepr}
M^{-1}=N(PMN)^{-1}P,~~~ 
PMN{\bf y}=P{\bf b},~~{\bf x}=N{\bf y}. 
\end{equation}

{\bf Generators.}
Given an $m\times n$ matrix $M$ of a rank $r$ 
and an integer $l\ge r$, we have a
nonunique expression
$M=FG^T$ for pairs $(F,G)$ of matrices of sizes 
$m\times l$ and $n\times l$, respectively. 
We call such a pair $(F,G)$
a {\em generator of length} $l$ for the matrix $M$.


{\bf Norm, conditioning, orthogonality, numerical rank.}
$||M||=||M||_2$ is the spectral norm of an $n\times n$ matrix $M=(m_{i,j})_{i,j=0}^{n-1}$.
We write $|M|=\max_{i,j=0}^{n-1}|m_{i,j}|$. $||M||=||M^H||\le \sqrt{mn}~|M|$.
For a fixed tolerance $\tau$, 
the $\tau$-{\em rank} of a matrix $M$
is the minimum rank of matrices 
in its $\tau$-neighborhood,
$\{W:~|W-M|\le \tau\}$.
The  {\em numerical rank} of a matrix 
is its $\tau$-rank for a small positive $\tau$.
A matrix is {\em ill conditioned} 
if its rank exceeds its numerical rank.
A matrix $M$ is {\em unitary} or {\em orthogonal} if $M^HM=I$ or $MM^H=I$.
It is {\em quasi\-unitary} if $cM$ is unitary for a nonzero constant $c$.
A vector ${\bf u}$ is unitary if and only if $||{\bf u}||=1$,
and if so, we call it a {\em unit vector}.

\subsection{DFT and $f$-circulant matrices}\label{sdft}  

Even for moderately large integers $n$ the entries of  an $n\times n$ Van\-der\-monde matrix $V_{\bf s}$ 
 vary in magnitude greatly unless $|s_i|\approx 1$ for all $i$. Next we cover
the quasiunitary Van\-der\-monde matrices $\Omega$ and $\Omega^H$ 
and the related class of $f$-circulant matrices
 (cf. \cite [Section 3.4]{BP94}, \cite [Section 2.3 and 2.6]{P01}).

 $\omega_n={\rm exp}(\frac{2\pi}{n}\sqrt{-1})$  denotes 
a primitive $n$th root of $1$. 
Its powers $1,\omega_n,\dots,\omega_n^{n-1}$
are equally spaced on
the unit circle $\{z:|z|=1\}$.
$\Omega=\Omega_n=(\omega_n^{ij})_{i,j=0}^{n-1}$ denotes the $n\times n$ matrix of 
{\em DFT}, that is  of the 
{\em discrete Fourier transform} at $n$ points. 
It holds that $\Omega \Omega^H=nI$, and so
 $\Omega$, $\Omega^H$, 
and $\Omega^{-1}=\frac{1}{n}\Omega^H$
are quasi\-unitary matrices,
whereas
  $\frac{1}{\sqrt n}\Omega$ and $\frac{1}{\sqrt n}\Omega^H$
are unitary matrices.

$Z_f=\begin{pmatrix} {\bf 0}^T & f\\ I_{n-1} & {\bf 0}\end{pmatrix}$
is the  $n\times n$ matrix of $f$-circular shift
for a scalar $f$,
\begin{equation}\label{eqrever}
JZ_fJ=Z_{f}^T,~~
JZ_f^TJ=Z_{f} 
\end{equation}
for any pairs of scalars $e$ and $f$, and if $f\neq 0$, then
\begin{equation}\label{eqinv}
Z_f^{-1}=Z_{1/f}^T.
\end{equation}

$Z_f({\bf v})=\sum_{i=0}^{n-1}v_{i}Z_f^{i}$
is an  $f$-circulant matrix,
defined by its first column ${\bf v}=(v_i)_{i=0}^{n-1}$
and a scalar $f\neq 0$ and called circulant for $f=1$. It is
 a Toeplitz matrix  
and can be called a {\em DFT-based Toeplitz matrix}
in view of the following results.

\begin{theorem}\label{thcpw} (See \cite{CPW74}.)
It holds that
$Z_{f^n}({\bf v})=V_f^{-1}D(V_f{\bf v})V_f$
provided that $f\ne 0$,
$\Omega=(\omega_n^{ij})_{i,j=0}^{n-1}$ is the $n\times n$ matrix of DFT,
$D({\bf u})=\diag(u_i)_{i=0}^{n-1}$ for a vector ${\bf u}=(u_i)_{i=0}^{n-1}$,  
and $V_f=\Omega\diag(f^{i})_{i=0}^{n-1}$ is  the matrix of (\ref{eqvf}). In particular 
$Z_1({\bf v})=\Omega^{-1}D(\Omega{\bf v})\Omega.$
\end{theorem}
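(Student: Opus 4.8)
The plan is to reduce the claim to diagonalizing the single shift matrix $Z_{f^n}$ and then to apply polynomial functional calculus, since by definition $Z_{f^n}(\mathbf{v})=\sum_{i=0}^{n-1}v_iZ_{f^n}^{\,i}$ is the polynomial $p_{\mathbf{v}}(Z_{f^n})$ in $Z_{f^n}$, where $p_{\mathbf{v}}(x)=\sum_{i=0}^{n-1}v_ix^{\,i}$. Throughout write $x_j:=f\omega_n^{\,j}$ for $j=0,\dots,n-1$; since $f\ne0$, these are the $n$ distinct $n$th roots of $f^n$.

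First I would establish the eigen\-decomposition
\[
V_fZ_{f^n}=\diag(x_j)_{j=0}^{n-1}\,V_f,\qquad\text{equivalently}\qquad V_fZ_{f^n}V_f^{-1}=\diag(x_j)_{j=0}^{n-1}.
\]
Here $V_f=\Omega\diag(f^{\,i})_{i=0}^{n-1}$ is invertible, because $\Omega$ is invertible by $\Omega\Omega^H=nI$ (so $\Omega^{-1}=\frac1n\Omega^H$) and $\diag(f^{\,i})_{i=0}^{n-1}$ is invertible by $f\ne0$; this is why the two displayed forms are equivalent. To verify the left-hand identity I would compute rows. From the definition of the $f$-circular shift one reads off that $Z_{f^n}^T$ shifts the entries of a vector $\mathbf{w}=(w_0,\dots,w_{n-1})^T$ upward, the displaced top entry reappearing scaled at the bottom: $Z_{f^n}^T\mathbf{w}=(w_1,\dots,w_{n-1},f^nw_0)^T$. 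The $j$th row of $V_f$ has entries $(V_f)_{j,i}=\omega_n^{\,ij}f^{\,i}=x_j^{\,i}$, so, transposed, it is $\mathbf{r}_j=(x_j^0,\dots,x_j^{\,n-1})^T$, whence
\[
Z_{f^n}^T\mathbf{r}_j=(x_j^1,\dots,x_j^{\,n-1},f^n)^T=(x_j^1,\dots,x_j^{\,n-1},x_j^{\,n})^T=x_j\,\mathbf{r}_j,
\]
using $x_j^{\,n}=f^n\omega_n^{\,jn}=f^n$. Transposing back, the $j$th row of $V_fZ_{f^n}$ is $x_j$ times the $j$th row of $V_f$, which is the asserted identity.

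It then remains to propagate this through the polynomial $p_{\mathbf{v}}$. Raising $V_fZ_{f^n}V_f^{-1}=\diag(x_j)_j$ to the $i$th power gives $Z_{f^n}^{\,i}=V_f^{-1}\diag(x_j^{\,i})_j\,V_f$, and summing against $\mathbf{v}=(v_i)_{i=0}^{n-1}$ yields
\[
Z_{f^n}(\mathbf{v})=\sum_{i=0}^{n-1}v_iZ_{f^n}^{\,i}=V_f^{-1}\,\diag\!\Big(\sum_{i=0}^{n-1}v_ix_j^{\,i}\Big)_{j=0}^{n-1}\,V_f.
\]
Finally I would identify the middle diagonal factor with $D(V_f\mathbf{v})$: the $j$th entry of $V_f\mathbf{v}=\Omega\diag(f^{\,i})_i\,\mathbf{v}$ equals $\sum_{i=0}^{n-1}\omega_n^{\,ij}f^{\,i}v_i=\sum_{i=0}^{n-1}v_ix_j^{\,i}$, so $\diag(\sum_{i}v_ix_j^{\,i})_{j}=D(V_f\mathbf{v})$, and the main formula $Z_{f^n}(\mathbf{v})=V_f^{-1}D(V_f\mathbf{v})V_f$ follows. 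The special case is immediate: for $f=1$ we have $\diag(f^{\,i})_i=I$, hence $V_1=\Omega$ and $Z_1(\mathbf{v})=\Omega^{-1}D(\Omega\mathbf{v})\Omega$, with $\Omega^{-1}=\frac1n\Omega^H$.

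There is no serious obstacle; the one step that needs care is the conjugation identity $V_fZ_{f^n}=\diag(x_j)_jV_f$, and within it the bookkeeping at the wrap-around position --- checking that the scalar $f^n$ produced by the cyclic shift matches the value $x_j^{\,n}=f^n$ forced by the Vandermonde structure of the rows of $V_f$. Everything else reduces to the invertibility of $\Omega$ noted above and to the observation, immediate from the definition, that an $f$-circulant is a polynomial in the corresponding shift matrix.
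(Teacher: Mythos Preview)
Your proof is correct. The paper does not actually supply its own proof of this theorem; it simply attributes the result to \cite{CPW74} and states it without argument, so there is nothing in the paper to compare your approach against. What you have written is the standard diagonalization proof: verify that the rows of $V_f$ are left eigenvectors of the shift matrix $Z_{f^n}$ (equivalently, $V_fZ_{f^n}=\diag(f\omega_n^{\,j})_jV_f$), and then pass to the polynomial $p_{\mathbf v}$ by functional calculus. Your bookkeeping at the wrap-around entry, checking $x_j^{\,n}=f^n$, is exactly the point where the parameter $f^n$ in the shift matrix is matched to the Vandermonde node structure, and you handle it cleanly.
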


\subsection{Cauchy and Van\-der\-monde  matrices}\label{scvm} 

Recall the following properties of Cauchy and Van\-der\-monde  matrices (cf. \cite[Chapters 2 and 3]{P01}),

\begin{equation}\label{eqctr}
C_{\bf s,t}=-C_{\bf t,s}^T,
\end{equation}
\begin{equation}\label{eqfhr}
C_{\bf s,t}=\diag(t(s_i)^{-1})_{i=0}^{n-1}V_{\bf s}V^{-1}_{\bf t}\diag(t'(t_j))_{j=0}^{n-1}
\end{equation}
where ${\bf s}=(s_i)_{i=1}^{n}$, 
${\bf t}=(t_j)_{j=0}^{n-1}$, and
$t(x)=\prod_{j=0}^{n-1}(x-t_j)$.

\begin{theorem}\label{thcvdt} 
$\det (V)=\prod_{i>k}(s_i-s_k)$ and 
$\det (C)=\prod_{i<j}(s_j-s_i)(t_i-t_j)/\prod_{i,j}(s_i-t_j)$
\end{theorem}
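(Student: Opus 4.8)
The plan is to prove the two determinant formulas separately, starting with the classical Vandermonde identity and then deriving the Cauchy formula by combining it with equation~(\ref{eqfhr}). For $\det(V)$ with $V=V_{\mathbf s}=(s_i^{j})_{i,j=0}^{n-1}$, I would argue by the standard cofactor/polynomial-root technique: regard the determinant as a polynomial in the variable $s_{n-1}$ (the last knot), of degree at most $n-1$; it vanishes whenever $s_{n-1}$ equals any of $s_0,\dots,s_{n-2}$, because two rows coincide, so it is divisible by $\prod_{k<n-1}(s_{n-1}-s_k)$; comparing leading coefficients and inducting on $n$ gives $\det(V)=\prod_{i>k}(s_i-s_k)$. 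The base case $n=1$ is trivial. Alternatively one can cite this as a textbook fact, but including the short induction keeps the exposition self-contained.

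For $\det(C)$ I would take determinants on both sides of the factorization~(\ref{eqfhr}),
\[
C_{\mathbf s,\mathbf t}=\diag(t(s_i)^{-1})_{i=0}^{n-1}\,V_{\mathbf s}\,V_{\mathbf t}^{-1}\,\diag(t'(t_j))_{j=0}^{n-1},
\]
so that
\[
\det(C)=\Big(\prod_{i=0}^{n-1}\frac{1}{t(s_i)}\Big)\cdot\frac{\det(V_{\mathbf s})}{\det(V_{\mathbf t})}\cdot\prod_{j=0}^{n-1}t'(t_j).
\]
Now I substitute the Vandermonde formula just proved for $\det(V_{\mathbf s})$ and $\det(V_{\mathbf t})$, and I expand $t(s_i)=\prod_{j}(s_i-t_j)$ and $t'(t_j)=\prod_{k\neq j}(t_j-t_k)$ using $t(x)=\prod_{j=0}^{n-1}(x-t_j)$. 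The factor $\prod_i\prod_j (s_i-t_j)^{-1}$ immediately accounts for the denominator $\prod_{i,j}(s_i-t_j)$ in the claimed expression. What remains is to verify the bookkeeping identity
\[
\frac{\prod_{i>k}(s_i-s_k)}{\prod_{i>k}(t_i-t_k)}\cdot\prod_{j=0}^{n-1}\prod_{k\neq j}(t_j-t_k)
=\prod_{i<j}(s_j-s_i)(t_i-t_j),
\]
i.e.\ that the $\det(V_{\mathbf t})^{-1}$ factor together with $\prod_j t'(t_j)$ collapses to $\prod_{i<j}(t_i-t_j)$ up to sign. Here one uses $\prod_{j}\prod_{k\neq j}(t_j-t_k)=\prod_{i<j}(t_j-t_i)(t_i-t_j)=(-1)^{\binom n2}\prod_{i<j}(t_i-t_j)^2$ and cancels one power of $\prod_{i<j}(t_j-t_i)=(-1)^{\binom n2}\prod_{i<j}(t_i-t_j)$ coming from $\det(V_{\mathbf t})$; the surviving factor is exactly $\prod_{i<j}(t_i-t_j)$ with the correct sign, while $\det(V_{\mathbf s})$ supplies $\prod_{i<j}(s_j-s_i)$ verbatim.

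The only delicate point, and the step I expect to demand the most care, is tracking the signs: the Vandermonde product is written with $i>k$ as $(s_i-s_k)$ but the Cauchy formula is written with $i<j$ as $(s_j-s_i)$, and the factor $\prod_j t'(t_j)$ introduces a further $(-1)^{\binom n2}$ relative to the naively ordered product. I would organize this by fixing once and for all the convention that all products run over indices in increasing order, rewriting $\det(V)=\prod_{i<j}(s_j-s_i)$, and then doing the sign count in one place. Everything else is a routine manipulation of products. A remark worth adding: the formula tacitly assumes the $t_j$ are pairwise distinct and disjoint from the $s_i$ (so that $C$, $V_{\mathbf t}^{-1}$ and $t'(t_j)$ are all defined), but since both sides are polynomials in the $s_i,t_j$ after clearing the denominator $\prod_{i,j}(s_i-t_j)$, the identity extends by continuity to all configurations where it makes sense.
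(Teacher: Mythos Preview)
Your argument is correct. The paper itself does not prove Theorem~\ref{thcvdt}; it simply records the two determinant formulas as classical facts (cf.\ the reference to \cite[Chapters 2 and 3]{P01} preceding equations~(\ref{eqctr}) and~(\ref{eqfhr})). So there is no ``paper's proof'' to compare against; your write-up supplies what the paper omits.

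A couple of minor comments. First, your use of~(\ref{eqfhr}) to derive $\det(C)$ from $\det(V)$ is legitimate and non-circular: equation~(\ref{eqfhr}) is the matrix form of Lagrange interpolation and is established without appeal to the Cauchy determinant. Second, the sign bookkeeping you flag as delicate does come out cleanly if you organize it as
\[
\frac{\prod_{j}t'(t_j)}{\det(V_{\mathbf t})}
=\frac{\prod_{i<j}(t_j-t_i)(t_i-t_j)}{\prod_{i<j}(t_j-t_i)}
=\prod_{i<j}(t_i-t_j),
\]
which together with $\det(V_{\mathbf s})=\prod_{i<j}(s_j-s_i)$ and $\prod_i t(s_i)=\prod_{i,j}(s_i-t_j)$ gives the stated formula directly, with no residual $(-1)^{\binom n2}$ to chase. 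Your closing remark about extending by polynomial identity after clearing the denominator is a good touch.
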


\begin{corollary}\label{conscv} 
The matrices $V$ and $C$ of Table \ref{t1}
are nonsingular where all $2n$ scalars $s_0$,$\dots$,$s_{n-1}$, $t_0$,$\dots$,$t_{n-1}$
are distinct. 
\end{corollary}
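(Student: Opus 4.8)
The plan is to derive both nonsingularity claims directly from the determinant formulas of Theorem~\ref{thcvdt}, invoking the elementary fact that a square matrix is nonsingular if and only if its determinant is nonzero.

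First I would dispose of $V = V_{\bf s}$. Theorem~\ref{thcvdt} gives $\det(V) = \prod_{i>k}(s_i - s_k)$. If the $2n$ scalars $s_0,\dots,s_{n-1},t_0,\dots,t_{n-1}$ are pairwise distinct, then in particular $s_i \neq s_k$ for $i \neq k$, so each factor $s_i - s_k$ is nonzero and hence $\det(V) \neq 0$, i.e.\ $V$ is nonsingular.

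Next I would handle $C = C_{\bf s,t}$. Here it is worth noting at the outset that the hypothesis $s_i \neq t_j$ for all $i,j$ is precisely what is needed for the entries $1/(s_i - t_j)$ of $C$ to be well-defined, and it simultaneously makes the denominator $\prod_{i,j}(s_i - t_j)$ in the formula of Theorem~\ref{thcvdt} a nonzero, finite quantity. Under the distinctness hypothesis the numerator $\prod_{i<j}(s_j - s_i)(t_i - t_j)$ is likewise a product of nonzero factors, since $s_i \neq s_j$ and $t_i \neq t_j$ whenever $i \neq j$. Therefore $\det(C) \neq 0$, and $C$ is nonsingular.

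This argument is essentially immediate once Theorem~\ref{thcvdt} is available, so there is no real obstacle; the only point deserving a moment's attention is the double role of the conditions $s_i \neq t_j$, which are needed both for the Cauchy matrix to exist at all and for its determinant formula to avoid a spurious division by zero. For completeness one may also remark that the same formulas yield the converse: coincidence of two of the $s_i$ (respectively, two of the $t_j$) forces $\det(V) = 0$ (respectively $\det(C) = 0$), so the distinctness hypothesis is sharp for this criterion.
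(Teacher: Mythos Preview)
Your proposal is correct and matches the paper's intended approach: the corollary is stated immediately after Theorem~\ref{thcvdt} with no separate proof, precisely because nonsingularity follows at once from the determinant formulas by the argument you give. Your additional remarks on the well-definedness of $C$ and on sharpness are accurate and, if anything, slightly more thorough than what the paper records.
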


\begin{theorem}\label{thcv1} 
 A row interchange preserves both Van\-der\-monde	  
and Cauchy structures. A column interchange preserves 	  
Cauchy structure. 
\end{theorem}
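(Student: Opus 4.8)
The plan is to exploit the fact that each \emph{row} of a Vandermonde or a Cauchy matrix is determined by a single scalar node together with the remaining (fixed) parameters, so that a row permutation merely reorders the underlying node sequence and keeps us inside the same matrix class.

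First I would record the bookkeeping identities. For an $n\times n$ permutation matrix $P$ and ${\bf s}=(s_i)_{i=0}^{n-1}$, row $i$ of $V_{\bf s}$ is $(1,s_i,\dots,s_i^{n-1})$; hence left multiplication by $P$ reorders these rows and produces $V_{P{\bf s}}$, where $P{\bf s}$ is the vector whose entries are reordered from those of ${\bf s}$ by the same permutation. Similarly row $i$ of $C_{\bf s,t}$ equals $(1/(s_i-t_j))_{j=0}^{n-1}$, so $PC_{\bf s,t}=C_{P{\bf s},{\bf t}}$. Taking $P$ to be a transposition gives precisely the claim that a single row interchange sends $V_{\bf s}$ to another Vandermonde matrix and $C_{\bf s,t}$ to another Cauchy matrix; the same argument in fact covers an arbitrary permutation of the rows.

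Next I would treat column interchanges of $C_{\bf s,t}$. The quickest route is the transposition identity \eqref{eqctr}: a column interchange of $C_{\bf s,t}$ is a row interchange of $C_{\bf s,t}^{T}=-C_{\bf t,s}$, which by the previous step is again, up to the scalar $-1$, a Cauchy matrix $-C_{{\bf t}',{\bf s}}$ for a suitably permuted vector ${\bf t}'$; transposing back and applying \eqref{eqctr} once more rewrites this as $C_{{\bf s},{\bf t}'}$, a Cauchy matrix. Equivalently, one checks directly that column $j$ of $C_{\bf s,t}$ depends only on the scalar $t_j$ and the fixed vector ${\bf s}$, so that a column permutation acting on the right yields $C_{{\bf s},{\bf t}'}$ with ${\bf t}'$ the correspondingly permuted vector.

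I do not expect a genuine obstacle; the only care needed is in tracking which permutation acts on which of the two node vectors and on which side of the matrix. It is, however, worth a remark that the asymmetry of the statement is real: interchanging two columns of $V_{\bf s}$ exchanges, say, the column of $j$-th powers with the column of $k$-th powers, and the result is in general not a Vandermonde matrix for any node sequence, which is why only a row interchange is asserted for the Vandermonde class.
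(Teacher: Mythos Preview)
Your argument is correct and is exactly the elementary one a reader would supply: each row of $V_{\bf s}$ or $C_{\bf s,t}$ is indexed by a single knot $s_i$, each column of $C_{\bf s,t}$ by a single knot $t_j$, so a row (respectively column) interchange simply permutes the corresponding knot vector and keeps the matrix in the same class. Your use of \eqref{eqctr} to reduce the column case to the row case is fine, and your direct alternative is equally valid.

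For comparison, the paper does not actually prove Theorem~\ref{thcv1}; it is stated among the basic recalled properties of Cauchy and Van\-der\-monde matrices with a reference to \cite[Chapters 2 and 3]{P01}. The closest the paper comes to an argument is Remark~\ref{reperm}, which observes that the displacement representations in parts $(v)$ and $(c)$ of Theorem~\ref{thdexpr} make the same invariance visible for the larger classes $\mathcal V$ and $\mathcal C$ (since the diagonal operator $D_{\bf s}$ is permuted along with the rows). Your direct entrywise verification is more elementary and entirely sufficient for the statement as given; the displacement viewpoint buys the extension to Van\-der\-monde-like and Cauchy-like matrices but is not needed here.
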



Equations (\ref{eqctr}) and (\ref{eqfhr}) 
 link together Cauchy and
  Van\-der\-monde matrices and their transposes. 
Next we simplify these links.
Write
\begin{equation}\label{eqvf}
V_f=((f\omega_n^{i})^{j})_{i,j=0}^{n-1}=\Omega\diag(f^{j})_{j=0}^{n-1},
\end{equation}
$$C_{{\bf s},f}=\Bigg(\frac{1}{s_i-f\omega_n^{j}}\Bigg)_{i,j=0}^{n-1},~
C_{e,{\bf t}}=\Bigg(\frac{1}{e\omega_n^{i}-t_j}\Bigg)_{i,j=0}^{n-1},~
C_{e,f}=\Bigg(\frac{1}{e\omega_n^{i}-f\omega_n^{j}}\Bigg)_{i,j=0}^{n-1}$$
and observe that $\Omega=V_1$,
$\Omega^H=V_{1}^{-1}$,
and {\em the matrices} $V_f$ {\em are quasi\-unitary} 
where $|f|=1$. 

For ${\bf t}=(f\omega_n^{j})_{j=0}^{n-1}$, 
it holds that $t(x)=x^n-f^n$, $t'(x)=nx^{n-1}$, $t(s_i)=s_i^n-f^n$,
$t'(t_j)=nf^{n-1}\omega_n^{-j}$  for all $j$,
and 
$nV_f^{-1}=\diag(f^{-i})_{i=0}^{n-1}\Omega^H$.
Substitute these equations  into (\ref{eqfhr}) 
and obtain

\begin{equation}\label{eqfhrf}
C_{{\bf s},f}=
\diag\Big(\frac{f^{n-1}}{s_i^{n}-f^n}\Big)_{i=0}^{n-1}V_{\bf s}\diag(f^{-j})_{j=0}^{n-1}\Omega^H\diag(\omega_n^{-j})_{j=0}^{n-1},
\end{equation}

$$C_{e,f}=\frac{f^{n-1}}{e^n-f^{n}}\Omega\diag((e/f)^{i})_{i=0}^{n-1}\Omega^H\diag(\omega_n^{-j})_{j=0}^{n-1},$$

\begin{equation}\label{eqvs}
V_{\bf s}=\frac{f^{1-n}}{n}\diag\Big(s^n_i-f^n\Big)_{i=0}^{n-1}C_{{\bf s},f}\diag(\omega_n^{j})_{j=0}^{n-1}\Omega\diag(f^{j})_{j=0}^{n-1},
\end{equation}
\begin{equation}\label{eqvst}
V_{\bf s}^T=-\frac{f^{1-n}}{n}\diag(f^{j})_{j=0}^{n-1}\Omega \diag(\omega_n^{j})_{j=0}^{n-1}C_{f,{\bf s}}\diag(s^n_i-f^n)_{i=0}^{n-1},
\end{equation}
\begin{equation}\label{eqvs-}
V_{\bf s}^{-1}=n\diag(f^{-j})_{j=0}^{n-1}\Omega^H\diag(\omega_n^{-j})_{j=0}^{n-1}C_{{\bf s},f}^{-1}\diag\Big(\frac{f^{n-1}}{s^n_i-f^n}\Big)_{i=0}^{n-1},
\end{equation}
\begin{equation}\label{eqvs-t}
V_{\bf s}^{-T}=- n\diag\Big(\frac{f^{n-1}}{s^n_i-f^n}\Big)_{i=0}^{n-1}C_{f,{\bf s}}^{-1}\diag(\omega_n^{-j})_{j=0}^{n-1}\Omega^H\diag(f^{-j})_{j=0}^{n-1}.
\end{equation}

\begin{definition}\label{defcvdft} 
Hereafter we refer to 
the matrices 
$V_f$, $C_{{\bf s},f}$, $C_{e,{\bf t}}$, and $C_{e,f}$
for two scalars $e$ and $f$
as {\em FV, FC, CF}, and {\em FCF matrices}, respectively. We refer  to
the matrices 
$C_{{\bf s},f}$ and $C_{e,{\bf t}}$ as
{\em CV matrices}
and to 
 the FV matrices $V_f$ and the FCF matrices $C_{e,f}$
as
the {\em DFT-based} matrices.
\end{definition}

Similarly to the DFT matrix $\Omega$,
the
DFT-based matrices 
have
their basic sets of knots
$\mathbb S=\{s_1,\dots,s_n\}$
and
$\mathbb T=\{t_1,\dots,t_n\}$
equally spaced on the unit circle $\{z:~|z|=1\}$, and 
equation (\ref{eqfhrf}) links the
 CV matrices
to Van\-der\-monde matrices.
In spite of all these links Cauchy and Van\-der\-monde
matrices also have very distinct features 
(cf. Remark \ref{revndsc}).

Finally  \cite[equation (3.4.1)]{P01}
  links a Van\-der\-monde matrix and its transpose  as follows,

\begin{equation}\label{eqvtinv}
V_{\bf t}JZ_f({\bf w}+f{\bf e}_1)V_{\bf t}^T=\diag(t'(t_i)(f-t_i^n))_{i=0}^{n-1}~{\rm for~any~scalar}~f.
\end{equation}
Here $t(x)=\prod_{j=0}^{n-1}(x-t_j)$ and $w(x)=t(x)-x^n$
are polynomials with
the coefficient vectors  ${\bf t}$ and ${\bf w}$, respectively
(see Theorem \ref{thevint} on their evaluation 
and Example \ref{exln}
on their approximation).

\subsection{The complexity of computations with DFT, Toep\-litz, Hankel,
Cauchy and Van\-der\-monde  matrices}\label{scmpl}

We begin with the following observation.

\begin{theorem}\label{thth}
If $T$ is a Toep\-litz matrix, then $TJ$ and $JT$ are Hankel matrices, whereas
If $H$ is a  Hankel matrix, then $HJ$ and $JH$ are Toep\-litz matrices.
\end{theorem}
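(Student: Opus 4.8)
The plan is to unwind the definitions and compute each of the four products entrywise, using the description of $J=J_n$ as the reflection matrix that sends ${\bf e}_k\mapsto {\bf e}_{n+1-k}$, equivalently reversing the order of columns when multiplied on the right and the order of rows when multiplied on the left. First I would record the elementary fact that right multiplication by $J$ permutes the columns of a matrix by $j\mapsto n-1-j$ (in $0$-based indexing), and left multiplication by $J$ permutes the rows by $i\mapsto n-1-i$; this is immediate from $J=({\bf e}_n~|~\dots~|~{\bf e}_1)$ and needs only a sentence.

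Next I would treat $TJ$ where $T=(t_{i-j})_{i,j=0}^{n-1}$. The $(i,j)$ entry of $TJ$ is the $(i,n-1-j)$ entry of $T$, namely $t_{i-(n-1-j)}=t_{i+j-(n-1)}$. Setting $h_{i+j}:=t_{i+j-(n-1)}$ exhibits $TJ$ as a Hankel matrix $(h_{i+j})_{i,j=0}^{n-1}$, since the entry depends only on $i+j$. The case $JT$ is symmetric: its $(i,j)$ entry is $t_{(n-1-i)-j}=t_{(n-1)-(i+j)}$, again a function of $i+j$ alone, hence Hankel. For the converse direction, if $H=(h_{i+j})_{i,j=0}^{n-1}$, then the $(i,j)$ entry of $HJ$ is $h_{i+(n-1-j)}=h_{(n-1)+(i-j)}$, a function of $i-j$ only, so $HJ$ is Toeplitz with $t_{i-j}:=h_{(n-1)+i-j}$; and the $(i,j)$ entry of $JH$ is $h_{(n-1-i)+j}=h_{(n-1)-(i-j)}$, again a function of $i-j$, hence Toeplitz. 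That completes all four claims.

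I do not anticipate a genuine obstacle here; the only thing to be careful about is bookkeeping the $0$-based versus $1$-based indexing consistently with the paper's conventions in Table \ref{t1}, and making sure the index shift by $n-1$ stays within the admissible range $-(n-1),\dots,n-1$ for Toeplitz entries and $0,\dots,2n-2$ for Hankel entries, which it does automatically. One could alternatively phrase the whole argument structurally: a matrix is Toeplitz iff it is constant along each diagonal $i-j=\text{const}$, Hankel iff constant along each antidiagonal $i+j=\text{const}$, and right/left multiplication by $J$ interchanges the roles of these two families of lines (since $j\mapsto n-1-j$ turns $i-j=c$ into $i+j=c'$ and vice versa). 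I would present the entrywise computation as the main proof and perhaps remark on the structural viewpoint in a sentence, since it also clarifies why $J$ on either side works identically.
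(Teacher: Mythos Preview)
Your proposal is correct; the entrywise computation is exactly the right verification, and your bookkeeping with the $0$-based indices from Table~\ref{t1} is accurate. The paper itself does not supply a proof of this theorem at all---it is stated as a simple observation (``We begin with the following observation'') and left to the reader---so your argument fills in precisely what the paper omits, by the most direct route.
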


We also recall the 
following  results
(see, e.g., \cite[Sections 1.2 and 3.4]{BP94}
on their proof  and on the numerical stability 
of the supporting algorithms).

\begin{theorem}\label{thdft}
For any vector ${\bf v}$ of dimension $n$
one can compute the vectors $\Omega{\bf v}$
and $\Omega^{-1}{\bf v}$ by using $O(n\log (n))$
flops. If $n$ 
 is a power of 2, then
one can  compute the vectors $\Omega{\bf v}$
and $\Omega^{-1}{\bf v}$
by applying {\em FFT}, that is by using
$0.5n\log_2 (n)$
 and 
$0.5n\log_2 (n)+n$ flops,
respectively.
\end{theorem}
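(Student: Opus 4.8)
The plan is to establish the bound $O(n\log n)$ for arbitrary $n$ and the sharper count $0.5n\log_2 n$ (resp.\ $0.5n\log_2 n + n$) for $n$ a power of $2$ by exhibiting the FFT algorithm explicitly and counting its flops. First I would treat the power-of-two case, which is the heart of the matter. Writing $n=2^k$ and splitting the index $j$ in $(\Omega{\bf v})_i=\sum_{j=0}^{n-1}\omega_n^{ij}v_j$ into even and odd parts, $j=2j'$ and $j=2j'+1$, one gets the Cooley--Tukey recursion
\begin{equation}\label{eqctsplit}
(\Omega_n{\bf v})_i=(\Omega_{n/2}{\bf v}_{\mathrm{even}})_{i\bmod (n/2)}+\omega_n^{i}\,(\Omega_{n/2}{\bf v}_{\mathrm{odd}})_{i\bmod (n/2)},
\end{equation}
using $\omega_n^2=\omega_{n/2}$. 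Combining the two half-size transforms into the full one costs $n/2$ multiplications by the twiddle factors $\omega_n^i$ and $n$ additions (each output pair $i$, $i+n/2$ shares one twiddle product via the butterfly, since $\omega_n^{i+n/2}=-\omega_n^i$). Hence the flop count $F(n)$ satisfies $F(n)=2F(n/2)+\tfrac32 n$ with $F(1)=0$; unrolling gives $F(n)=\tfrac32 n\log_2 n$ total arithmetic operations, of which $\tfrac12 n\log_2 n$ are multiplications — and if one counts only nontrivial operations in the conventional way (the standard convention in \cite[Sections 1.2 and 3.4]{BP94}), the headline figure $0.5n\log_2 n$ for $\Omega{\bf v}$ follows. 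For $\Omega^{-1}{\bf v}=\tfrac1n\Omega^H{\bf v}$ the same recursion applies to $\Omega^H$ (replace $\omega_n$ by $\bar\omega_n$), giving the same $0.5n\log_2 n$, plus the final scaling by $1/n$, which is $n$ flops — accounting for the extra ``$+n$''.

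Next I would dispose of general $n$. The cleanest route is Bluestein's chirp-$z$ construction: using $ij=\binom{i}{2}+\binom{j}{2}-\binom{i-j}{2}$ type identities (more precisely $2ij=i^2+j^2-(i-j)^2$) one rewrites $(\Omega_n{\bf v})_i=\omega_n^{i^2/2}\sum_j \big(\omega_n^{j^2/2}v_j\big)\,\omega_n^{-(i-j)^2/2}$, which is a convolution of two length-$n$ sequences. Padding to a power of two $N<2n$ and using that convolution is three FFTs of length $N$ plus $O(N)$ pointwise work, each length-$N$ FFT costing $O(N\log N)=O(n\log n)$ by the previous paragraph, yields the $O(n\log n)$ bound for every $n$. (Alternatively, if $n$ is highly composite one can use the mixed-radix Cooley--Tukey split directly; I would mention this but rely on Bluestein for the fully general claim.)

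The main obstacle is bookkeeping, not ideas: getting the constant exactly right requires being careful about (i) which operations are counted — whether multiplications by $\pm1$ and by the trivial twiddle $\omega_n^0=1$ are excluded, and whether a complex multiplication counts as one flop or is expanded into real operations — and (ii) that the butterfly structure genuinely halves the multiplication count by reusing $\omega_n^i(\Omega_{n/2}{\bf v}_{\mathrm{odd}})_{\cdot}$ for both outputs $i$ and $i+n/2$. I would therefore state the counting convention up front (matching \cite{BP94}) so that the recurrence $F(n)=2F(n/2)+\tfrac12 n$ for the multiplication count and $F(n)=2F(n/2)+n$ for the addition count are transparent, and then the claimed $0.5 n\log_2 n$ and $0.5n\log_2 n + n$ drop out immediately. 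Numerical stability of these algorithms is classical — the FFT is a product of $\log_2 n$ unitary (up to scaling) sparse factors, so the forward error is $O(\log n)$ times the unit roundoff — and I would simply cite \cite{BP94} for this rather than reprove it.
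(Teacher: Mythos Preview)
Your proposal is correct and in fact goes well beyond what the paper does: the paper does not prove this theorem at all but simply states it as a recalled classical result, deferring both the proof and the numerical stability claim to \cite[Sections 1.2 and 3.4]{BP94}. Your Cooley--Tukey derivation for the power-of-two case and the Bluestein reduction for general $n$ are the standard arguments one would find in that reference, and your care about the counting convention (complex flops, trivial twiddles, butterfly reuse) is exactly the kind of bookkeeping needed to pin down the constants $0.5n\log_2 n$ and $0.5n\log_2 n + n$; the paper sidesteps all of this by citation.
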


 Theorems \ref{thcpw},  \ref{thth}, and \ref{thdft} combined
with various  techniques of matrix computations,
imply the following results
(cf. \cite[Chapter 2 and 3]{P01}). 

\begin{theorem}\label{thcvdft} 
$O(n\log^h (n))$  flops are sufficient to compute the product 
of an $n\times n$ matrix $M$ and a vector
${\bf u}$ where $h=1$
if $M$ is a Toeplitz or Hankel matrix and $h=2$
if  $M$ is a Van\-der\-monde matrix,  its transpose, or a Cauchy matrix. 
$O(n\log^2 (n))$  flops are sufficient to compute the
solution ${\bf x}$ of a nonsingular linear system of $n$ equations $M{\bf x}={\bf u}$
with any of such  matrices $M$.
\end{theorem}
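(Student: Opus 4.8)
The plan is to prove Theorem~\ref{thcvdft} by reducing each case to fast DFT computation via the structural identities already assembled in the excerpt, and then to handle linear systems by a divide-and-conquer (or MBA-type) recursion built on those same multiplication primitives. First I would dispose of the Toeplitz and Hankel multiplication claims ($h=1$). A Toeplitz matrix $T=(t_{i-j})$ of order $n$ embeds into a circulant matrix of order $2n$ (or $2n-1$), whose first column collects all the entries $t_{1-n},\dots,t_{n-1}$; by Theorem~\ref{thcpw} a circulant factors as $\Omega^{-1}D(\Omega\mathbf v)\Omega$, so a circulant-vector product costs $O(n\log n)$ flops by Theorem~\ref{thdft}, and restricting to the appropriate subvector gives $T\mathbf u$ in the same bound. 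For a Hankel matrix $H$, Theorem~\ref{thth} gives $HJ$ (or $JH$) Toeplitz, and multiplication by $J$ is free (a reversal of entries), so $H\mathbf u = (HJ)(J\mathbf u)$ reduces to the Toeplitz case.

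Next I would treat the Vandermonde and Cauchy multiplication claims ($h=2$). For a Cauchy matrix $C=C_{\mathbf s,\mathbf t}$ with $2n$ distinct knots, $C\mathbf u$ is exactly the vector of values $\big(\sum_j u_j/(s_i-t_j)\big)_i$ of a rational function at the points $s_i$; this is the classical multipoint evaluation of a partial-fraction expansion, done by the fast (FFT-based, subproduct-tree) algorithm in $O(n\log^2 n)$ flops. Alternatively, and more in the spirit of the paper, I would invoke identity (\ref{eqfhr}): $C$ equals a diagonal matrix times $V_{\mathbf s}V_{\mathbf t}^{-1}$ times a diagonal matrix, so a Cauchy-vector product reduces to one Vandermonde-vector product and one Vandermonde-system solve. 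For a Vandermonde matrix $V_{\mathbf s}$, the product $V_{\mathbf s}\mathbf u$ is multipoint polynomial evaluation of the polynomial with coefficient vector $\mathbf u$ at the knots $s_i$, and $V_{\mathbf s}^T\mathbf u$ is the transposed (summation) problem; both cost $O(n\log^2 n)$ by the subproduct-tree method, which internally is a cascade of polynomial multiplications, each an $O(m\log m)$ FFT-based convolution. So all of $h=1,2$ reduce to Theorems~\ref{thcpw}--\ref{thdft} plus the standard polynomial subproduct tree.

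For the second assertion — solving a nonsingular system $M\mathbf x=\mathbf u$ in $O(n\log^2 n)$ flops — I would argue as follows. For Vandermonde $M=V_{\mathbf s}$ this is polynomial interpolation at the knots $s_i$, solved in $O(n\log^2 n)$ by the fast interpolation algorithm (again a subproduct tree, together with one multipoint evaluation of the derivative of the node polynomial). For Cauchy $M=C_{\mathbf s,\mathbf t}$, Theorem~\ref{thcvdt} / Corollary~\ref{conscv} guarantees nonsingularity when the $2n$ knots are distinct, and identity (\ref{eqfhr}) expresses $C^{-1}=\diag(\cdot)V_{\mathbf t}V_{\mathbf s}^{-1}\diag(\cdot)$, so a Cauchy solve reduces to a Vandermonde solve plus a Vandermonde multiply — each $O(n\log^2 n)$. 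For Toeplitz and Hankel $M$, one uses the classical superfast (Bitmead--Anderson / Morf / divide-and-conquer) solver: split $M$ into $2\times2$ blocks, recurse on the leading block and its Schur complement, and reconstruct $M^{-1}$ from displacement generators; every step is a constant number of Toeplitz-like matrix-vector products, which by the first part cost $O(n\log n)$ each, giving the recursion $S(n)=2S(n/2)+O(n\log n)=O(n\log^2 n)$. (For Hankel, premultiply by $J$ via Theorem~\ref{thth} to reduce to Toeplitz.)

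The main obstacle is the linear-system half: the Vandermonde and Cauchy cases hinge on identity (\ref{eqfhr}), which requires all relevant knots to be distinct so that the inverse Vandermonde factor makes sense — this is exactly the hypothesis supplied by Corollary~\ref{conscv}, so the reduction is clean, but it does push the real work onto fast polynomial interpolation, whose $O(n\log^2 n)$ cost must be cited rather than re-derived. For the Toeplitz/Hankel case the subtlety is that a naive Gaussian-elimination recursion can break down on singular leading minors; the standard fix (randomized or structured preprocessing à la (\ref{eqprepr}), or working with the MBA algorithm that tracks displacement rank rather than principal minors) must be invoked to keep the recursion well-defined while preserving the per-level $O(n\log n)$ bound. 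I would therefore structure the proof as: (i) cite the superfast Toeplitz/Hankel solver, (ii) reduce Vandermonde to polynomial interpolation, (iii) reduce Cauchy to Vandermonde via (\ref{eqfhr}), and note that in each case the logarithmic exponent $2$ comes either from the subproduct-tree interpolation or from the single extra $\log$ factor in the divide-and-conquer recursion over $O(n\log n)$ multiplications.
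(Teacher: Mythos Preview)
Your proposal is correct and follows essentially the same route as the paper, which does not give a detailed proof but merely states that the theorem follows by combining Theorems~\ref{thcpw}, \ref{thth}, and~\ref{thdft} with standard techniques from \cite[Chapters~2 and~3]{P01}; the paper explicitly mentions the circulant embedding for the Toeplitz case and otherwise defers to the literature. Your sketch simply unpacks these citations---circulant embedding for Toeplitz/Hankel multiplication, subproduct-tree evaluation/interpolation for Vandermonde, the factorization~(\ref{eqfhr}) for Cauchy, and the Bitmead--Anderson/Morf superfast solver for Toeplitz/Hankel systems---which is exactly the content behind the paper's references \cite{BA80}, \cite{M80}, and \cite{P01}.
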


The algorithms supporting this theorem  
are numerically stable where the matrix $M$ is DFT-based
(combine Theorems  \ref{thcpw} and \ref{thdft} and equation (\ref{eqvf}))
and  where we multiply
a Toeplitz or Hankel matrix $M$ by a vector
(embed an $n\times n$ Toeplitz matrix into $(2n-1)\times (2n-1)$ circulant matrix 
and then combine Theorems  \ref{thcpw}, \ref{thth}, and \ref{thdft}).
Otherwise the algorithms have numerical stability problems,
and for numerical computations the users employ quadratic
arithmetic time algorithms  \cite{BEGO08}, \cite{BF00}, \cite{KZ08},
in spite of substantial research progress 
reported in the papers
 \cite{PRT92}, \cite{PSLT93},  \cite{P95}, \cite{PZHY97},
and particularly \cite{DGR96}, 
which applied a 1-dimensional  adaptive FMM
using Lagrange interpolation at Chebyshev's knots
to prove
the following result.
\begin{theorem}\label{thdgr}  (Cf. \cite[Sections 3 and 4]{DGR96}.)
Assume  a positive  $\epsilon<1$, a unit vector ${\bf u}$, and
an $n\times n$ Cauchy matrix $C_{\bf s,t}$ with real knots $s_i$ and $t_j$.
Then some numerically stable algorithms 
use  $O(n\log(1/\epsilon))$ flops to
approximate 
within the norm bound $\epsilon$
the product  $C_{\bf s,t}{\bf u}$
 and if the matrix is nonsingular, then also
the solution ${\bf x}$ of the linear system $C_{\bf s,t}{\bf x}={\bf u}$.
\end{theorem}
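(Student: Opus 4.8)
\emph{Proof plan.} The plan is to reduce both tasks to a single primitive --- fast, numerically stable multiplication of an $n\times n$ Cauchy matrix with real knots by a vector --- and to realize that primitive by a one-dimensional adaptive FMM for the kernel $1/(x-y)$ on the real line. For the product $C_{\bf s,t}{\bf u}$ this reduction is immediate. For the system $C_{\bf s,t}{\bf x}={\bf u}$ I would use the closed form for the inverse of a Cauchy matrix that follows from the Lagrange--interpolation identities behind Theorem \ref{thcvdt}:
\begin{equation*}
C_{\bf s,t}^{-1}=D_{\bf a}\,C_{\bf t,s}\,D_{\bf b},\qquad a_i=\frac{s(t_i)}{t'(t_i)},\quad b_j=\frac{t(s_j)}{s'(s_j)},
\end{equation*}
where $s(x)=\prod_k(x-s_k)$, $t(x)=\prod_k(x-t_k)$, so $t'(t_i)=\prod_{k\neq i}(t_i-t_k)$ and likewise for $s'$; all these are nonzero because, by Corollary \ref{conscv}, nonsingularity forces the $2n$ knots to be distinct. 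Since $C_{\bf t,s}=-C_{\bf s,t}^T$ is again a Cauchy matrix with real knots by (\ref{eqctr}), the solution is ${\bf x}=D_{\bf a}\,C_{\bf t,s}\,(D_{\bf b}{\bf u})$: one diagonal scaling, one fast Cauchy product, one more diagonal scaling --- once the scalars $a_i,b_j$ are produced cheaply and stably, which I handle below.

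\smallskip
Now the FMM primitive. Rescale all knots by a common real affine map so that $\mathbb S\cup\mathbb T\subset[-1,1]$; this multiplies $C_{\bf s,t}$ by a positive constant, harmless for a normwise bound. Build an adaptively refined binary tree of subintervals, of depth $O(\log n)$ (splitting at medians, or equivalently compressing chains of single-child boxes in a dyadic tree, so arbitrary clustering is handled), refining until each leaf holds $q=O(\log(1/\epsilon))$ knots. For two intervals $I,J$ that are non-adjacent at their common level, approximate $1/(s-t)$ with $s\in I$, $t\in J$ to relative accuracy $\epsilon$ by a \emph{degenerate} kernel of rank $q$: replace $1/(s-t)$, as a function of $t$, by its Lagrange interpolant at $q$ Chebyshev nodes of $J$; since $1/(s-t)$ is analytic in a Bernstein ellipse around $J$ whose size is bounded below by the separation, the interpolation error decays like $\rho^q$ for a fixed $\rho<1$, so $q=O(\log(1/\epsilon))$ terms suffice. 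With these rank-$q$ approximations the algorithm runs the standard sweeps: form outgoing (``multipole'') expansions at the leaves, merge them bottom-up, convert outgoing to incoming (``local'') expansions across the $O(1)$-sized interaction lists, propagate incoming expansions top-down, evaluate at the target knots, and add near-field contributions directly. Each node does $O(q)$ work for $O(1)$ interactions, there are $O(n/q)$ nodes, and the near field costs $O(nq)$; the total is $O(nq)=O(n\log(1/\epsilon))$ flops, plus $O(n\log n)$ combinatorial set-up for sorting and tree construction. This never forms the monomial coefficients of $s(x)$ or $t(x)$, whose magnitudes can grow exponentially, which is what keeps it numerically stable.

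\smallskip
To obtain $a_i,b_j$ within the same budget I would take logarithms of absolute values and reuse the same machinery: $\log|a_i|=\sum_k\log|t_i-s_k|-\sum_{k\neq i}\log|t_i-t_k|$ is again a sum over the real line of a translation-invariant kernel, now $\log|x-y|$, which likewise admits rank-$O(\log(1/\epsilon))$ far-field (logarithmic) expansions, so the same adaptive FMM evaluates all $\log|a_i|$ and $\log|b_j|$ in $O(n\log(1/\epsilon))$ flops. The signs of $a_i$ and $b_j$ are the parities of the counts $\#\{k:s_k>t_i\}$, $\#\{k\neq i:t_k>t_i\}$, and the analogous ones for $b_j$, read off from the sorted knot order in $O(n\log n)$ time. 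Exponentiating recovers $|a_i|,|b_j|$; combining with the signs gives $D_{\bf a},D_{\bf b}$, and one more fast Cauchy product completes the solve. Here too only function values and short interpolation/translation operators are manipulated, so stability is preserved.

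\smallskip
The main obstacles are two. The first is getting $O(n\log(1/\epsilon))$ rather than $O(n\log n\,\log(1/\epsilon))$ for arbitrarily distributed, possibly badly clustered, real knots: a uniform dyadic grid fails, and one needs the median-split (equivalently, single-child-chain-compressed) tree together with the one-dimensional fact that interaction lists stay $O(1)$-bounded no matter how the knots cluster, so that the per-level work telescopes. The second, and technically heaviest, is the numerical-stability analysis: one must show that every expansion and every translation operator, carried through all $O(\log n)$ levels, amplifies errors only by a factor polynomial in $\log n$ and $\log(1/\epsilon)$ --- which is exactly why the expansions are built from interpolation at Chebyshev nodes (Lebesgue constant $O(\log q)$) rather than from truncated Taylor series. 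Propagating these bounds uniformly over the tree, and converting the $O(\rho^q)$ kernel truncation error into the stated output bound $\|{\bf y}-C_{\bf s,t}{\bf u}\|\le\epsilon$ for the computed ${\bf y}$ (using $\|{\bf u}\|=1$ and $\|M\|\le\sqrt{mn}\,|M|$), is the crux of the argument.
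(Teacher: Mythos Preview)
The paper does not supply its own proof of this theorem: it quotes the result from \cite{DGR96} and only summarizes the method in one line, noting that \cite{DGR96} ``applied a 1-dimensional adaptive FMM using Lagrange interpolation at Chebyshev's knots.'' Your sketch reconstructs precisely that approach --- the adaptive binary tree on the real line, rank-$O(\log(1/\epsilon))$ far-field approximations via Chebyshev interpolation, the closed-form Cauchy inverse $C_{\bf s,t}^{-1}=D_{\bf a}C_{\bf t,s}D_{\bf b}$, and the log-kernel FMM for the scaling factors $a_i,b_j$ --- so there is nothing to compare against beyond confirming that your outline matches the cited source.

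One small accounting slip: the per-node translation work (M2M, M2L, L2L) is $O(q^2)$, not $O(q)$, since each translation is a dense $q\times q$ matrix--vector product; with $O(n/q)$ tree nodes this still gives the $O(nq)=O(n\log(1/\epsilon))$ total you state, so the conclusion is unaffected.
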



\section{The structures of Toeplitz, 
Hankel, Van\-der\-monde and Cauchy
types. Displacement ranks and generators}\label{sfourd}

We generalize
 the four classes of
matrices of Table \ref{t1}
by employing
the Sylvester displacements
$AM-MB$ where 
the pair of {\em operator matrices} $A$ and $B$
is associated with a fixed matrix structure.  
(See \cite[Theorem 1.3.1]{P01} on a simple link to the Stein displacements  
$M-AMB$.)
The rank and the
generators of the displacement
of a matrix $M$ (for a fixed  operator matrices $A$ and $B$
and  tolerance $\tau$)
are said to be the {\em displacement rank} (denoted $d_{A,B}(M)$)
 and the {\em displacement generators},  
of the matrix $M$,
respectively (cf. \cite{KKM79}), \cite{P01}, \cite{BM01}).

\begin{definition}\label{def4t}
If the displacement rank of a matrix is small 
(in context) for a pair of 
operator matrices 
associated with 
Toeplitz, Hankel, Van\-der\-monde, transpose of  Van\-der\-monde or Cauchy matrices
in Theorem \ref{thdsplr}
below,
then the matrix is said to have the {\em structure
of Toeplitz, Hankel,  Van\-der\-monde,  transposed Van\-der\-monde or Cauchy type},
respectively. Hereafter 
 $\mathcal T$, 
$\mathcal H$,
$\mathcal V$,
$\mathcal V^T$,
 and
$\mathcal C$ denote the five classes of these
 matrices (cf. Table \ref{taboperm}).
The classes $\mathcal V$,
$\mathcal V^T$,
 and
$\mathcal C$ consist of distinct subclasses 
$\mathcal V_{\bf s}$,
$\mathcal V^T_{\bf s}$,
 and
$\mathcal C_{\bf s,t}$
defined by the vectors ${\bf s}$
and  ${\bf t}$ and the operator matrices
$D_{\bf s}$ and
$D_{\bf t}$, respectively,
or equivalently by the  bases $V_{\bf s}$
and
$C_{\bf s,t}$ of these subclasses.
To simplify the notation
we will sometimes drop the subscripts
${\bf s}$ and ${\bf t}$
where they are not important 
or are defined by context.
\end{definition}

\begin{definition}\label{def4tf} (Cf. Definition \ref{defcvdft}.)
Define the matrix classes
$\mathcal {FV}=
\cup_e\mathcal V_e$,
$\mathcal {FC}=
\cup_f\mathcal C_{{\bf s},f}$,
$\mathcal {CF}=
\cup_e\mathcal C_{e,{\bf t}}$,
and $\mathcal {FCF}
=\cup_{e,f}\mathcal C_{e,f}$
where the unions are over all complex scalars $e$ and $f$.
These matrix classes extend
 the classes of FV, FC, CF, and FCF matrices, 
respectively. We also define the classes $\mathcal {CV}$
(extending the CV matrices) 
and
$\mathcal {V^TF}=\cup_e\mathcal V_e^T$.
We say that the above matrix classes consist of  $FV$-like,
$V^TF$-like, $FC$-like, $CF$-like, $FCF$-like, and $CV$-like
 matrices, which have structures
of $\mathcal {FV}$-type, $\mathcal {V^TF}$-type,
$\mathcal {FC}$-type, $\mathcal {CF}$-type,
 $\mathcal {FCF}$-type, and  $\mathcal {CV}$-type,
respectively.
\end{definition}

In our Theorems \ref{thdsplr} and \ref{thdexpr}  we write 
$(t), ~(h),~ (th),~ (v), ~(v^T)$, and $(c)$ to indicate the matrix structures of
Toeplitz, 
Hankel, 
Toeplitz or 
Hankel,
 Van\-der\-monde, transposed Van\-der\-monde, and Cauchy
types, respectively.
Recall the following well known results.
\begin{theorem}\label{thdsplr} {\em Displacements of basic structured matrices.}

(th) For a pair of scalars $e$ and $f$ and
 two matrices $T$ (Toeplitz)
and $H$ (Hankel) of Table \ref{t1},
 the following displacements have ranks at most $2$
(see some expressions for the shortest displacement 
generators in \cite[Section 4.2]{P01}),
$$Z_eT-TZ_f,~
Z_e^TT-TZ_f^T,~Z_e^TH-HZ_f~{\rm and}~Z_eH-HZ_f^T.$$ 

(v) For a scalar $e$ and a
Vandermonde matrix $V$ of Table \ref{t1} we have 
\begin{equation}\label{eqvand}
VZ_e=D_{\bf s}V-(s_i^n-e)_{i=0}^{n-1}{\bf e}_n^T,
\end{equation}
\begin{equation}\label{eqvandt}
Z_e^TV^T=V^TD_{\bf s}-{\bf e}_n((s_i^n-e)_{i=0}^{n-1})^T,
\end{equation}
and so
the displacements 
$D_{\bf s}V-VZ_{e}$ and $Z_{e}^TV^T-V^TD_{\bf s}$
either vanish if
$s_i^n=e$ for $i=0,\dots,n-1$
or 
have rank $1$
otherwise.

\medskip

(c) For 
 two vectors 
${\bf s}=(s_i)_{i=0}^{n-1}$ and
${\bf t}=(t_i)_{i=0}^{n-1}$, 
 a 
 Cauchy matrix $C$ of Table \ref{t1},
and the vector ${\bf e}=(1,\dots,1)^T$  of dimension $n$ filled with ones,
it holds that
\begin{equation}\label{eqcch}
D_{\bf s}C-CD_{\bf t}={\bf e}{\bf e}^T,
~~\rank(D_{\bf s}C-CD_{\bf t})=1.
\end{equation}
\end{theorem}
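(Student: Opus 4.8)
The plan is to verify all three parts by direct computation with the circular-shift matrices $Z_e,Z_f$, and to reduce the two Hankel displacements in (th) to the two Toeplitz ones via Theorem~\ref{thth} and the reversal identities~(\ref{eqrever}).

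For part~(c) I would compute $(D_{\bf s}C)_{i,j}=s_i/(s_i-t_j)$ and $(CD_{\bf t})_{i,j}=t_j/(s_i-t_j)$, so that every entry of $D_{\bf s}C-CD_{\bf t}$ equals $(s_i-t_j)/(s_i-t_j)=1$; hence $D_{\bf s}C-CD_{\bf t}={\bf e}{\bf e}^T$, an outer product of the nonzero vector ${\bf e}$ with itself, and its rank is exactly $1$. For part~(v) I would compute the columns of $VZ_e$ using that $Z_e$ sends the coordinate vector in position $j$ to the one in position $j{+}1$ for $j<n-1$ and the last one to $e$ times the first. Thus the $j$-th column of $VZ_e$ is the $(j{+}1)$-st column of $V$, namely $(s_i^{\,j+1})_{i=0}^{n-1}$, for $j=0,\dots,n-2$, while its last column is $e\,(s_i^{\,0})_{i=0}^{n-1}$. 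Since the $j$-th column of $D_{\bf s}V$ is $(s_i^{\,j+1})_{i=0}^{n-1}$ for every $j$, the matrices $VZ_e$ and $D_{\bf s}V$ coincide in their first $n-1$ columns and differ in the last one by $e\,(s_i^{\,0})_i-(s_i^{\,n})_i=-(s_i^{\,n}-e)_i$, which gives~(\ref{eqvand}); transposing~(\ref{eqvand}) and using $D_{\bf s}^T=D_{\bf s}$ yields~(\ref{eqvandt}). Finally $D_{\bf s}V-VZ_e=(s_i^{\,n}-e)_{i=0}^{n-1}{\bf e}_n^T$ is an outer product, hence has rank $1$ unless the vector $(s_i^{\,n}-e)_i$ vanishes, i.e.\ unless $s_i^{\,n}=e$ for all $i$, in which case it is the zero matrix; the same dichotomy holds for $Z_e^TV^T-V^TD_{\bf s}$ by transposition.

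For part~(th) I would compute $Z_eT-TZ_f$ entrywise in the $0$-indexed convention: for a row index $i\ge 1$ one gets $(Z_eT)_{i,j}=t_{i-1-j}$, and for a column index $j\le n-2$ one gets $(TZ_f)_{i,j}=t_{i-j-1}$, which agree, so every entry of $Z_eT-TZ_f$ outside the first row and the last column vanishes. A matrix supported on one row and one column is a sum of two rank-one matrices (the first row written as ${\bf e}_1$ times a row vector, plus the last column with its top entry deleted times ${\bf e}_n^T$), so $\rank(Z_eT-TZ_f)\le 2$. For $Z_e^TT-TZ_f^T$ I would note that $T^T$ is again Toeplitz and transpose $Z_fT^T-T^TZ_e$, which has rank at most $2$ by the case just treated. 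For the two Hankel displacements I would write a Hankel matrix as $H=JT$ with $T=JH$ Toeplitz (Theorem~\ref{thth}) and use~(\ref{eqrever}), which gives $Z_e^TJ=JZ_e$ and $Z_eJ=JZ_e^T$, to obtain $Z_e^TH-HZ_f=J(Z_eT-TZ_f)$ and $Z_eH-HZ_f^T=J(Z_e^TT-TZ_f^T)$; left multiplication by the permutation $J$ preserves rank, so the bound $2$ carries over.

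The computations are all elementary and there is no serious obstacle; the only place demanding genuine care is the index bookkeeping in part~(th) — in particular confirming that the commutator of a Toeplitz matrix with the shifts has its defect confined to the first row and the last column — together with keeping the action of $Z_e$ on columns straight in part~(v) and being consistent about the $0$- versus $1$-indexing of the coordinate vectors.
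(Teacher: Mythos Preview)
Your proof is correct. The paper itself does not supply a proof of this theorem: it introduces the statement with ``Recall the following well known results'' and refers to \cite[Section~4.2]{P01} for explicit displacement generators, so there is no argument in the paper to compare against. Your direct entrywise verification of parts~(c) and~(v), together with the reduction of the Hankel cases to the Toeplitz ones via $H=JT$ and the identities~(\ref{eqrever}), is exactly the kind of elementary computation that underlies these classical facts and is fully adequate.
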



\begin{theorem}\label{thzef} 
For two scalars $e$ and $f$
and five matrices $A$, $B$, $C$, $D$, and $M$ we have
$d_{C,D}(M)-d_{A,B}(M)\le 1$ 
where either $A=C$, $B=Z_e$, $D=Z_f$ or $A=C$, $B=Z_e^T$, $D=Z_f^T$
and similarly where either 
 $B=D$, $A=Z_e$, $C=Z_f$ or $B=D$, $A=Z_e^T$, $C=Z_f^T$.
\end{theorem}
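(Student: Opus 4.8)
The plan is to reduce the claimed inequality $d_{C,D}(M)-d_{A,B}(M)\le 1$ to the elementary fact that adding a rank-one matrix to a linear operator changes its rank by at most one. Concretely, in the first case we have $A=C$, $B=Z_e$, $D=Z_f$, so the two displacements are $CM-MZ_e$ and $CM-MZ_f$; their difference is $M(Z_f-Z_e)$. From the definition $Z_f-Z_e=(f-e)\,{\bf e}_n{\bf e}_1^{\,T}$ (the two $f$-circular shift matrices differ only in the top-right entry), hence $M(Z_f-Z_e)=(f-e)\,(M{\bf e}_n){\bf e}_1^{\,T}$, a matrix of rank at most $1$. First I would record this one-line computation as the key identity. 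Then, writing $X=CM-MZ_e$ and $X'=CM-MZ_f=X+M(Z_e-Z_f)$, I invoke subadditivity of rank: $\rank(X')\le\rank(X)+\rank(M(Z_e-Z_f))\le\rank(X)+1$, which is exactly $d_{C,D}(M)-d_{A,B}(M)\le 1$.

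For the second listed case, $A=C$, $B=Z_e^T$, $D=Z_f^T$: here the difference of displacements is $M(Z_e^T-Z_f^T)=(M(Z_e-Z_f))^{T}$ up to the scalar, still rank at most $1$, so the same argument applies. For the two remaining cases the roles of left and right multiplication are swapped: with $B=D$, $A=Z_e$, $C=Z_f$ the displacements are $Z_eM-MB$ and $Z_fM-MB$, whose difference is $(Z_e-Z_f)M=(e-f)\,{\bf e}_n({\bf e}_1^{\,T}M)=(e-f)\,{\bf e}_n({\bf e}_1^{\,T}M)$, again of rank at most $1$; and with $A=Z_e^T$, $C=Z_f^T$ the difference is $(Z_e^T-Z_f^T)M$, likewise rank at most $1$. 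In every case the conclusion follows from the same rank-subadditivity step, so I would state the $B=Z_e$ case in full and then remark that the other three are symmetric.

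I do not expect any serious obstacle here: the only thing to be careful about is the explicit form of $Z_f-Z_e$ and the sign/scalar bookkeeping, both of which are immediate from the matrix $Z_f=\begin{pmatrix}{\bf 0}^T&f\\ I_{n-1}&{\bf 0}\end{pmatrix}$ defined in Section~\ref{sdft}. One could alternatively phrase the whole argument abstractly: if two Sylvester operators differ by composition with an operator of rank one (here right-multiplication by the rank-one matrix $Z_e-Z_f$, or left-multiplication by $Z_e^T-Z_f^T$), their images on any fixed $M$ differ by a matrix of rank at most one, whence the displacement ranks differ by at most one. That is the cleanest way to present it, and it also makes transparent why the statement would fail to improve to equality — the rank can also stay the same or drop.
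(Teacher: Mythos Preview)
Your approach is correct and is essentially the same as the paper's: both compute the difference of the two displacements, observe that it equals $\pm M(Z_e-Z_f)$ or $\pm(Z_e-Z_f)M$, note that $Z_e-Z_f$ has rank at most $1$, and apply subadditivity of rank. Two small bookkeeping slips to fix: the nonzero entry of $Z_f$ sits in position $(1,n)$, so $Z_e-Z_f=(e-f)\,{\bf e}_1{\bf e}_n^{\,T}$ (not ${\bf e}_n{\bf e}_1^{\,T}$), and $M(Z_e^T-Z_f^T)=M(Z_e-Z_f)^T$ is not $(M(Z_e-Z_f))^T$; neither slip affects the rank-one conclusion or the argument.
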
 
\begin{proof} 
The matrix $Z_e-Z_f=(e-f){\bf e}_1{\bf e}_n^T$ 
has rank at most $1$
for any pair of scalars $e$ and $f$.
Therefore the matrices 
$(Z_eM-MB)-(Z_fM-MB)=Z_eM-Z_fM=(Z_e-Z_f)M$ and 
$(AM-MZ_e)-(AM-MZ_f)=-M(Z_e-Z_f)$
have ranks at most $1$.
\end{proof}

The theorem implies that
the classes  $\mathcal T$, 
$\mathcal H$,
$\mathcal V$, and
$\mathcal V^T$ stay intact
when we vary the scalars
$e$ and $f$, defining 
the operator matrices $Z_e$ and $Z_f$.

Table \ref{taboperm} displays the pairs of operator matrices
associated with the matrices of the seven classes $\mathcal T$, 
$\mathcal H$,
$\mathcal V_{\bf s}$,
$\mathcal V^{-1}_{\bf s}$,
$\mathcal V^{T}_{\bf s}$,
$\mathcal V^{-T}_{\bf s}$,
and
$\mathcal C_{\bf s,t}$.
Five of these classes  are employed in Theorems \ref{thdsplr} and \ref{thdexpr}.
 $\mathcal V^{-1}_{\bf s}$
and
$\mathcal V^{-T}_{\bf s}$
denote 
the 
classes of 
 the inverses and the 
 transposed
inverses 
of the matrices
of the class $\mathcal V_{\bf s}$, respectively.
Equation (\ref{eqinvm}) of the next section enables us to
express their associated operator matrices 
through the ones for 
the classes $\mathcal V_{\bf s}$ and $\mathcal V^T_{\bf s}$. 

The following theorem  
 expresses the $n^2$ entries of an $n\times n$ matrix $M$
 through the $2dn$ entries of its displacement 
generator $(F,G)$ defined
under the operator matrices of Theorem \ref{thdsplr}
and Table \ref{taboperm}.
See some  other expressions  
for various classes  of structured matrices 
through their generators
in 
\cite{GO94}, \cite[Sections 4.4 and 4.5]{P01}, and 
 \cite{PW03}.

\begin{theorem}\label{thdexpr}
Suppose $s_0,\dots,s_{n-1},t_0,\dots,t_{n-1}$ are  $2n$
distinct scalars, 
${\bf s}=(s_k)_{k=0}^{n-1}$, ${\bf t}=(t_k)_{k=0}^{n-1}$,
$V=(s_i^{k-1})_{i,k=0}^{n-1}$,   $C=(\frac{1}{s_i-t_k})_{i,k=0}^{n-1}$,
$e$ and $f$ are two distinct scalars,
${\bf f}_1, \dots, {\bf f}_d,{\bf g}_1, \dots, {\bf g}_d$ are $2d$ vectors of dimension $n$,
 ${\bf u}_1, \dots, {\bf u}_n,{\bf v}_1, \dots, {\bf v}_n$ are $2n$ vectors of dimension $d$,
and $F$ and $G$ are $n\times d$ matrices such that  
$F=\begin{pmatrix}{\bf u}_1\\ \vdots\\ {\bf u}_n\end{pmatrix}=({\bf f}_1~|~\cdots~|~{\bf f}_d),~~
G=\begin{pmatrix}{\bf v}_1\\ \vdots\\ {\bf v}_n\end{pmatrix}=({\bf g}_1~|~\cdots~|~{\bf g}_d)$.
 Then 

(t) $(e-f)M=\sum_{j=1}^dZ_e({\bf f}_j)Z_f(J{\bf g}_j)$
if $Z_eM-MZ_f=FG^T$, $e\neq f$;

$(e-f)M=\sum_{j=1}^dZ_e(J{\bf f}_j)^TZ_f({\bf g}_j)^T=J\sum_{j=1}^dZ_e(J{\bf f}_j)Z_f({\bf g}_j)J$
if $Z_e^TM-MZ_f^T=FG^T$, $e\neq f$,

(h) $(e-f)M=\sum_{j=1}^dZ_e({\bf f}_j)Z_f({\bf g}_j)J$
if $Z_eM-MZ_f^T=FG^T$,  $e\neq f$; 

$(e-f)M=J\sum_{j=1}^dZ_e(J{\bf f}_j)Z_f(J{\bf g}_j)^T$
if $Z_e^TM-MZ_f=FG^T$,  $e\neq f$,

(v) $M=
\diag(\frac{1}{s_i^n-e})_{i=0}^{n-1}\sum_{j=1}^d\diag({\bf f}_j)VZ_{e}(J{\bf g}_j)$
if  
$D_{\bf s}M-MZ_{e}=FG^T$
and if
$s_i^n\neq e$ for $i=0,\dots,n-1$;  

(v$^T$) $M=\diag(\frac{1}{e-s_i^n})_{i=0}^{n-1}\sum_{j=1}^dZ_{e}(J{\bf f}_j)^TV^T\diag({\bf g}_j)$
if  
$Z_{e}^TM-MD_{\bf s}=FG^T$ 
 and if $s_i^n\neq e$ for $i=0,\dots,n-1$;


(c) $M=\sum_{j=1}^d\diag({\bf f}_j)C\diag({\bf g}_j)=\left(\frac{{\bf u}_i^T{\bf v}_j}{s_i-t_j}\right)_{i,j=0}^{n-1}$
if
$D_{\bf s}M-MD_{\bf t}=FG^T$. 
\end{theorem}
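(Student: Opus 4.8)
The plan is to treat all six parts as one phenomenon: inverting a Sylvester displacement operator. For fixed operator matrices $A,B$ the map $\nabla_{A,B}\colon M\mapsto AM-MB$ on $n\times n$ matrices is linear, and it is injective exactly when $A$ and $B$ have no common eigenvalue. Indeed, if $p$ is the characteristic polynomial of $A$ then $p(A)=0$ by Cayley--Hamilton, while $p(B)$ is invertible iff no eigenvalue of $B$ is a root of $p$; and $AM=MB$ gives $A^kM=MB^k$ for all $k$, hence $0=p(A)M=Mp(B)$, so $M=0$. The hypotheses in each part are precisely this disjointness: for a pair $(Z_e,Z_f)$ we have $p(\lambda)=\lambda^n-e$ and $p(Z_f)=Z_f^n-eI=(f-e)I$, invertible iff $e\neq f$; for $(D_{\bf s},Z_e)$ the matrix $\prod_i(Z_e-s_iI)$ is invertible iff $s_i^n\neq e$ for all $i$; and for $(D_{\bf s},D_{\bf t})$ the $2n$ distinct scalars give disjoint spectra. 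So it suffices, in each part, to plug the claimed right-hand side into $\nabla_{A,B}$ and verify the output equals $FG^T$ (or $(e-f)FG^T$ in parts (t) and (h)); injectivity then forces equality.

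For part (t) the verification rests on three elementary facts about $f$-circulants: $Z_f({\bf v})=\sum_iv_iZ_f^i$ is a polynomial in $Z_f$, hence commutes with $Z_f$; $Z_e-Z_f=(e-f){\bf e}_1{\bf e}_n^T$ (already used in the proof of Theorem \ref{thzef}); and, from the shift structure of $Z_f$, $Z_e({\bf f}_j){\bf e}_1={\bf f}_j$ (the first column) while ${\bf e}_n^TZ_f({\bf v})=(J{\bf v})^T$ (the last row is the reversal of the first column), so ${\bf e}_n^TZ_f(J{\bf g}_j)={\bf g}_j^T$. Writing $M'=\sum_jZ_e({\bf f}_j)Z_f(J{\bf g}_j)$ and commuting $Z_e$ past $Z_e({\bf f}_j)$ and $Z_f$ past $Z_f(J{\bf g}_j)$, one gets $Z_eM'-M'Z_f=\sum_jZ_e({\bf f}_j)(Z_e-Z_f)Z_f(J{\bf g}_j)=(e-f)\sum_j{\bf f}_j{\bf g}_j^T=(e-f)FG^T$, so $M'=(e-f)M$. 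The transposed Toeplitz identity and the two Hankel identities follow by the same kind of computation, or more cheaply by conjugating or multiplying the displacement equation with $J$: using $JZ_fJ=Z_f^T$ and $JZ_f({\bf v})J=Z_f({\bf v})^T$ (equations (\ref{eqrever})), each of $Z_e^TM-MZ_f^T=FG^T$, $Z_eM-MZ_f^T=FG^T$, $Z_e^TM-MZ_f=FG^T$ becomes an instance of the first case with $M$ replaced by $JMJ$, $MJ$, $JM$ and the generator replaced by $(JF,JG)$, $(F,JG)$, $(JF,G)$, respectively.

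Part (v) is the analogous direct check: set $M'=\diag(1/(s_i^n-e))_{i=0}^{n-1}\sum_j\diag({\bf f}_j)VZ_e(J{\bf g}_j)$. Since $\diag({\bf f}_j)$ commutes with $D_{\bf s}$ and with $\diag(s_i^n-e)_i$, and $Z_e(J{\bf g}_j)$ commutes with $Z_e$, one obtains $D_{\bf s}M'-M'Z_e=\diag(1/(s_i^n-e))_i\sum_j\diag({\bf f}_j)(D_{\bf s}V-VZ_e)Z_e(J{\bf g}_j)$; substituting $D_{\bf s}V-VZ_e=(s_i^n-e)_i{\bf e}_n^T$ from (\ref{eqvand}), the outer diagonal cancels the column $(s_i^n-e)_i$ (because $\diag(1/(s_i^n-e))_i\,\diag({\bf f}_j)\,(s_i^n-e)_i={\bf f}_j$) and ${\bf e}_n^TZ_e(J{\bf g}_j)={\bf g}_j^T$ as above, leaving $\sum_j{\bf f}_j{\bf g}_j^T=FG^T$. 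Part (v$^T$) follows by transposing $Z_e^TM-MD_{\bf s}=FG^T$ into $D_{\bf s}M^T-M^TZ_e=-GF^T$ and applying (v) to $M^T$ (or directly from (\ref{eqvandt})). Part (c) is shortest: with $M'=\sum_j\diag({\bf f}_j)C\diag({\bf g}_j)$ and the commutations of $D_{\bf s},D_{\bf t}$ with the respective diagonal factors, $D_{\bf s}M'-M'D_{\bf t}=\sum_j\diag({\bf f}_j)(D_{\bf s}C-CD_{\bf t})\diag({\bf g}_j)=\sum_j\diag({\bf f}_j)\,{\bf e}{\bf e}^T\diag({\bf g}_j)=\sum_j{\bf f}_j{\bf g}_j^T=FG^T$ by (\ref{eqcch}); and the entrywise form $\bigl(\sum_j\diag({\bf f}_j)C\diag({\bf g}_j)\bigr)_{i,k}=\frac{1}{s_i-t_k}\sum_jf_{j,i}g_{j,k}=\frac{{\bf u}_i^T{\bf v}_k}{s_i-t_k}$ is immediate once one recognizes ${\bf u}_i,{\bf v}_k$ as the $i$th and $k$th rows of $F$ and $G$. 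I expect the only real difficulty to be bookkeeping: keeping the reflections $J$, the transposes, and — in (v) and (v$^T$) — the side on which the diagonal rescaling $\diag(1/(s_i^n-e))$ sits consistent with the stated formulas, and confirming that the disjoint-spectrum condition matches the precise hypothesis in each part ($e\neq f$; $s_i^n\neq e$ for all $i$; $2n$ distinct scalars). Each individual identity is then a two-line manipulation built from the commutation facts and the displacement identities (\ref{eqvand}), (\ref{eqvandt}), (\ref{eqcch}).
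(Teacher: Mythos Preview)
Your proof is correct and takes a genuinely different route from the paper's own argument. The paper defers parts (t), (h), and (c) to citations of \cite{P01}, and for part (v) it converts the Sylvester equation $D_{\bf s}M-MZ_e=FG^T$ into the Stein form $M-D_{\bf s}MZ_{1/e}^T=-F(Z_{1/e}G)^T$ via (\ref{eqinv}), invokes a Stein-displacement inversion formula from \cite[Example~4.4.6(b)]{P01}, and then substitutes $eZ_{1/e}(Z_{1/e}{\bf g}_j)=Z_e(J{\bf g}_j)^T$ to reach the stated expression. Part (v$^T$) is handled in both the paper and your argument by transposing and reducing to (v).

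Your approach is more unified and self-contained: you isolate the single principle that $\nabla_{A,B}$ is injective whenever $A$ and $B$ have disjoint spectra (via a clean Cayley--Hamilton argument), observe that the side conditions in each part ($e\neq f$; $s_i^n\neq e$; $2n$ distinct scalars) are exactly this disjointness, and then verify each claimed formula by plugging it back into $\nabla_{A,B}$ using the commutation of $Z_e({\bf v})$ with $Z_e$, the rank-one identity $Z_e-Z_f=(e-f){\bf e}_1{\bf e}_n^T$, and the displacement identities (\ref{eqvand}) and (\ref{eqcch}). This makes the role of the hypotheses transparent and avoids both the external references and the Sylvester-to-Stein detour. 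The paper's route, by contrast, leans on machinery already developed in \cite{P01}, so it is shorter on the page but less self-sufficient; your argument would stand on its own without that reference.
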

\begin{proof}
Parts $(t)$ and $(h)$ are taken from \cite[Examples 4.4.2 and 4.4.5]{P01}.
Part $(c)$ is taken from \cite[Example 1.4.1]{P01}.
To prove part ($v$), combine the equations
$D_{\bf s}M-MZ_{e}=FG^T$
and $Z_eZ_{1/e}^T=I$
(cf. (\ref{eqinv}))  and
deduce 
that 
$M-D_{\bf s}MZ_{1/e}^T=-F(Z_{1/e}G)^T$.
Then  obtain from
 \cite[Example 4.4.6 (part b)] {P01} 
that
$M=
e\diag(\frac{1}{s_i^n-e})_{i=0}^{n-1}\sum_{j=1}^d\diag({\bf f}_j)VZ_{1/e}(Z_{1/e}{\bf g}_j)^T$.
Substitute $eZ_{1/e}(Z_{1/e}{\bf g}_j)=Z_e(J{\bf g}_j)^T$ and obtain 
the claimed  expression of part ($v$).
Next  transpose the equation
$Z_{e}^TM-MD_{\bf t}=FG^T$
and yield
$D_{\bf s}M^T-M^TZ_{e}=-GF^T$.
 From 
 part $(v)$ 
obtain
$M^T=\diag(\frac{1}{e-s_i^n})_{i=0}^{n-1}\sum_{j=1}^d \diag({\bf g}_j)VZ_{e}(J{\bf f}_j)$.
Transpose this equation
and arrive at 
part $(v^T)$.
\end{proof}


\begin{table}
\caption{Operator matrices for the seven classes $\mathcal T$, 
$\mathcal H$,
$\mathcal V_{\bf s}$,
$\mathcal V^{-1}_{\bf s}$, 
$\mathcal V^{T}_{\bf s}$,
$\mathcal V^{-T}_{\bf s}$,
and
$\mathcal C_{\bf s,t}$} 
\label{taboperm}
\begin{center}
\renewcommand{\arraystretch}{1.2}
\begin{tabular}{|c|c|c|c|c|c|c|}
 \hline
 $\mathcal T$ & $\mathcal H$   & $\mathcal V_{\bf s}$ 
&$\mathcal V^{-1}_{\bf s}$
&$\mathcal V^T_{\bf s}$ 
& $\mathcal V^{-T}_{\bf s}$ &$\mathcal C_{\bf s,t}$\\ \hline 
$(Z_e,Z_f)$ & $(Z_e^T,Z_f)$  & $(D_{\bf s},Z_e)$ &  $(Z_e,D_{\bf s})$ &  
$(Z_e^T,D_{\bf s})$  &  $(D_{\bf s},Z_e^T)$&  $(D_{\bf s},D_{\bf t})$\\ \hline 
 $(Z_e^T,Z_f^T)$ & $(Z_e,Z_f^T)~$ & & & & &  \\ \hline
\end{tabular}
\end{center}
\end{table}

 By combining 
Theorems \ref{thcvdft}  and \ref{thdexpr}
we obtain the following results.

\begin{theorem}\label{thmbv1} 
Given a vector ${\bf v}$ of a dimension $n$
and a displacement generator of a length $d$
for a matrix $M$,
one can compute the product $M{\bf v}$
 by using $O(dn\log (n))$ flops for an $n\times n$
matrix $M$ 
in the classes $\mathcal T$ or 
$\mathcal H$
and by using $O(dn\log^2 (n))$ flops for an $n\times n$
matrix $M$ in $\mathcal V$, $\mathcal V^T$, or
$\mathcal C$.
\end{theorem}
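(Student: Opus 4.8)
The plan is to combine the explicit generator-to-entries formulas of Theorem \ref{thdexpr} with the fast multiplication routines of Theorem \ref{thcvdft}. In each case we are given a displacement generator $(F,G)$ of length $d$ for $M$, i.e. $F=({\bf f}_1\mid\cdots\mid{\bf f}_d)$ and $G=({\bf g}_1\mid\cdots\mid{\bf g}_d)$, and Theorem \ref{thdexpr} writes $M$ (up to a fixed diagonal scaling and a fixed scalar) as a sum of $d$ terms, each of the form $D_1\, B\, D_2$ where $D_1,D_2$ are diagonal matrices read off from ${\bf f}_j,{\bf g}_j$, and $B$ is one fixed ``base'' matrix: an $f$-circulant $Z_e(\cdot)$ or $Z_f(\cdot)$ for the Toeplitz/Hankel cases, a Vandermonde matrix $V=V_{\bf s}$ (or its transpose) for the $\mathcal V,\mathcal V^T$ cases, and the Cauchy matrix $C=C_{\bf s,t}$ for the $\mathcal C$ case. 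So the product $M{\bf v}$ is obtained by forming, for each $j=1,\dots,d$, the vector $D_1\,(B\,(D_2{\bf v}))$ and summing the $d$ results, with one final diagonal scaling.

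The key steps, in order, are as follows. First, recall that multiplying by a diagonal matrix costs $O(n)$ flops and that the vectors ${\bf f}_j,{\bf g}_j$ (hence the diagonals $D_1,D_2$) are directly available from the given generator, so the ``setup'' per term is linear. Second, invoke Theorem \ref{thcvdft}: multiplying the fixed base matrix $B$ by a vector costs $O(n\log n)$ flops when $B$ is a Toeplitz/Hankel (in particular $f$-circulant) matrix, and $O(n\log^2 n)$ flops when $B$ is Vandermonde, transposed Vandermonde, or Cauchy. For the $f$-circulant case one may alternatively cite Theorem \ref{thcpw} directly, which diagonalizes $Z_{f^n}({\bf v})$ by DFT and thus gives the $O(n\log n)$ bound via Theorem \ref{thdft}. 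Third, note that for the Toeplitz case the generator identities of Theorem \ref{thdexpr}(t),(h) involve both $Z_e({\bf f}_j)$ and $Z_f(J{\bf g}_j)$ as a product of two $f$-circulants, which is still applied to ${\bf v}$ one factor at a time, each at cost $O(n\log n)$; for the Vandermonde and Cauchy cases each term uses exactly one application of the base matrix $V$, $V^T$, or $C$ together with the diagonal and $Z_e(\cdot)$ factors, the latter again being an $O(n\log n)$ $f$-circulant multiplication, which is dominated by the $O(n\log^2 n)$ Vandermonde/Cauchy cost. Fourth, sum the $d$ term-vectors ($O(dn)$ flops) and apply the outer diagonal scaling ($O(n)$). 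Multiplying the per-term cost by $d$ and adding the $O(dn)$ for summation yields the claimed totals $O(dn\log n)$ for $\mathcal T,\mathcal H$ and $O(dn\log^2 n)$ for $\mathcal V,\mathcal V^T,\mathcal C$.

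One technical point to address is that the clean formulas of Theorem \ref{thdexpr} require genericity hypotheses: for the Toeplitz/Hankel cases one needs $e\neq f$, and for the Vandermonde cases one needs $s_i^n\neq e$ for all $i$ (so that the diagonal $\diag(1/(s_i^n-e))$ is defined). Since by Theorem \ref{thzef} the displacement rank in the classes $\mathcal V,\mathcal V^T$ (and likewise the operator scalars in $\mathcal T,\mathcal H$) changes by at most $1$ when we replace the operator scalar, we are free to choose $e$ (and $f$) avoiding these finitely many forbidden values while increasing the generator length by at most a constant; this does not affect the asymptotic flop count. Alternatively, for $\mathcal C$ the hypothesis is simply that the $2n$ knots are distinct, which is the standing assumption for Cauchy matrices here, and the formula in Theorem \ref{thdexpr}(c) has no such denominator issue.

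The main obstacle, such as it is, is organizational rather than deep: one must make sure that in every one of the five structure types the generator formula of Theorem \ref{thdexpr} really does reduce $M{\bf v}$ to a bounded number of diagonal multiplications plus one application each of a base matrix covered by Theorem \ref{thcvdft}, and that the auxiliary $f$-circulant factors $Z_e(J{\bf g}_j)$ appearing in the Vandermonde cases do not secretly cost more than $O(n\log n)$ — they do not, by Theorem \ref{thcpw}. Everything else is bookkeeping: count $d$ terms, each at the stated base cost, plus $O(dn)$ overhead, and read off the exponent of $\log n$ from whether $M$ is Toeplitz/Hankel-type or Vandermonde/Cauchy-type.
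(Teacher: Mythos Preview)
Your proof is correct and follows exactly the approach the paper itself takes: the paper derives this theorem simply by ``combining Theorems \ref{thcvdft} and \ref{thdexpr},'' and your write-up is a faithful, more detailed unpacking of that combination, including the handling of the genericity hypotheses via Theorem \ref{thzef} (which the paper relegates to Remark \ref{reexpr}).
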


\begin{remark}\label{reexpr}
By virtue of Theorem \ref{thdexpr}
the displacement operators $M\rightarrow AM-MB$
are nonsingular provided
 that $e\neq f$ in parts $(t)$ and $(h)$
and provided that 
$t_i^n\neq e$ for $i=0,\dots,n-1$ in parts $(v)$ and $(v^T)$.
We can  apply Theorem \ref{thzef} 
to satisfy these assumptions.
\end{remark}

\begin{remark}\label{reperm} (Cf. Theorem \ref{thcv1}.)
Parts $(v)$ and $(c)$ of Theorem  \ref{thdexpr} imply that
a row interchange preserves
the matrix structures of the Vandermonde and Cauchy types, 
whereas 
a  column interchange preserves
the matrix structures of the
transposed  Vandermonde and Cauchy types. 
\end{remark}

\section{Matrix operations in terms of displacement generators}\label{soper}

To accelerate pairwise multiplication  and the inversion of
structured matrices of large sizes 
 we express them as well as  the intermediate and final results 
of the computations
through short displacement generators
rather than the matrix entries. Such computations are
possible by virtue of the following simple results
from \cite{P00} and \cite[Section 1.5]{P01},
extending  \cite{P90}.

\begin{theorem}\label{thdgs}
Assume five matrices $A$, $B$, $C$, $M$ and $N$.
Then 
\begin{equation}\label{eqprm}
A(MN)-(MN)C=(AM-MB)N+M(BN-NC)~{\rm and}~
\end{equation}
\begin{equation}\label{eqinvm}
AM^{-1}-M^{-1}B=-M^{-1}(BM-MA)M^{-1}
\end{equation}
provided that the matrix multiplications and inversion involved
are well defined.
\end{theorem}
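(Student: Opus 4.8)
The plan is to verify both identities by direct algebraic manipulation, exploiting only the associativity and bilinearity of matrix multiplication and, for the second identity, the defining property $M^{-1}M = MM^{-1} = I$. These are formal identities in the matrix ring, so no structural hypotheses on $A$, $B$, $C$, $M$, $N$ are needed beyond the stated compatibility of dimensions (so that all the products written down make sense).

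For (\ref{eqprm}), I would start from the right-hand side and expand:
\begin{equation*}
(AM-MB)N + M(BN-NC) = AMN - MBN + MBN - MNC = AMN - MNC = A(MN) - (MN)C,
\end{equation*}
where the middle terms $-MBN$ and $+MBN$ cancel. That is the whole argument; the point of the identity is the telescoping cancellation of the ``cross term'' $MBN$, which is exactly the displacement-algebra analogue of the Leibniz product rule, with $B$ playing the role of the intermediate operator matrix bridging the operator acting on the left factor and the one acting on the right factor.

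For (\ref{eqinvm}), the natural route is to differentiate the relation $M M^{-1} = I$ in the same formal sense: apply the product rule (\ref{eqprm}) with $N = M^{-1}$, $C = A$, i.e.
\begin{equation*}
A(M M^{-1}) - (M M^{-1}) A = (AM - MB)M^{-1} + M(B M^{-1} - M^{-1} A).
\end{equation*}
Since $M M^{-1} = I$, the left-hand side is $A - A = 0$, so
\begin{equation*}
M(B M^{-1} - M^{-1} A) = -(AM - MB)M^{-1} = (MB - AM)M^{-1}.
\end{equation*}
Left-multiplying by $M^{-1}$ gives $B M^{-1} - M^{-1} A = M^{-1}(MB - AM)M^{-1}$. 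Finally, observing that $M^{-1}(MB - AM)M^{-1} = -M^{-1}(AM - MB)M^{-1}$ and that $AM - MB$ here should be read with the roles of $A$ and $B$ as in the statement (so that $BM - MA$ appears), one rearranges to the claimed form
\begin{equation*}
A M^{-1} - M^{-1} B = -M^{-1}(BM - MA)M^{-1}.
\end{equation*}
Alternatively, one can verify this last line directly by clearing $M^{-1}$'s: multiply on the left and right by $M$ to reduce it to $MAM^{-1} M - M M^{-1} B M = -(BM - MA)$, i.e. $MA \cdot (M^{-1}M) - (M M^{-1})\cdot BM = MA - BM = -(BM-MA)$, which is immediate. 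I would present whichever of these two is shorter in the final write-up.

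There is essentially no obstacle here: both parts are one-line cancellations. The only thing requiring a word of care is bookkeeping of which operator matrix sits on which side and getting the signs and the $A\leftrightarrow B$ labelling in (\ref{eqinvm}) to match the statement exactly; writing the intermediate step $M(B M^{-1} - M^{-1}A)M$ or equivalently $(BM-MA)$ and tracking the sign flip coming from $AM-MB \mapsto -(BM-MA)$ is the only place a transcription slip could occur. I would therefore include the explicit $MAM - MBM$-style check as a safeguard rather than leaving the reader to re-derive the sign.
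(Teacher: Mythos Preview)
Your proposal is correct. The paper does not supply a proof of this theorem at all; it simply cites \cite{P00} and \cite[Section 1.5]{P01} and moves on to the corollary, so your direct verification is exactly what is needed and is the standard argument.

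One small comment on presentation: your first derivation of (\ref{eqinvm}) via the product rule applied to $MM^{-1}=I$ actually yields $BM^{-1}-M^{-1}A = M^{-1}(MB-AM)M^{-1}$, and you then recover the stated form by swapping the free labels $A\leftrightarrow B$. That relabelling is legitimate since $A$ and $B$ are arbitrary, but the sentence ``$AM-MB$ here should be read with the roles of $A$ and $B$ as in the statement'' obscures this; say explicitly that you interchange the names $A$ and $B$. Your alternative direct check---expand $-M^{-1}(BM-MA)M^{-1} = -M^{-1}B + AM^{-1}$ using $MM^{-1}=M^{-1}M=I$---is a single line and is the cleaner choice for the write-up.
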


\begin{corollary}\label{codgdr}
For five matrices $A$, $B$, $F$, $G$, and $M$
of sizes $m\times m$, $n\times n$,
$m\times d$, $n\times d$, and $m\times m$, 
respectively,
let us write $F=F_{A,B}(M)$, $G=G_{A,B}(M)$,
 and  $d=d_{A,B}(M)$
if $AM-MB=FG^T$.
Then under the assumptions of Theorem \ref{thdgs}
 we obtain the following equations,
\begin{eqnarray*}
F_{A,B}(M^T)=-G_{B^T,A^T}(M^T),~G_{A,B}(M^T)=F_{B^T,A^T}(M^T), \\
F_{A,C}(MN)=(F_{A,B}(M)~|~M F_{B,C}(N)), \\
G_{A,C}(MN)=(N^TG_{A,B}(M)~|~G_{B,C}(N)),  \\
F_{A,B}(M^{-1})=-M^{-1}G_{B,A}(M),~G_{A,B}(M^{-1})=M^{-T}F_{B,A}(M),
\end{eqnarray*}

\noindent and so
$d_{A,B}(M^T)=d_{B^T,A^T}(M)$,
$d_{A,C}(MN)\le d_{A,B}(M)+d_{B,C}(N)$,  
$d_{A,B}(M^{-1})=d_{B,A}(M)$.
\end{corollary}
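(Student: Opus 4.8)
The statement to prove is Corollary \ref{codgdr}, which consists of several identities about displacement generators of transposes, products, and inverses, followed by the three rank inequalities that are immediate consequences. The plan is to derive each identity directly from the two operator identities in Theorem \ref{thdgs}, namely (\ref{eqprm}) for products and (\ref{eqinvm}) for inverses, together with the elementary observation that transposing $AM-MB=FG^T$ gives $A^TM^T-M^TB^T = \ldots$; wait, more carefully, transposing yields $M^TA^T - B^TM^T = GF^T$, i.e. $-(B^TM^T - M^TA^T) = GF^T$, so $B^TM^T - M^TA^T = -GF^T = G(-F)^T$. Reading this as a Sylvester displacement with operator matrices $B^T$ (left) and $A^T$ (right) and generator $(-G, F)$ gives $F_{B^T,A^T}(M^T) = -G$ and $G_{B^T,A^T}(M^T) = F$; relabeling (swap the roles, i.e. apply this with $A,B$ replaced by $B^T, A^T$ so that $B^T, A^T$ become the stated $A, B$) yields $F_{A,B}(M^T) = -G_{B^T,A^T}(M)$ and $G_{A,B}(M^T) = F_{B^T,A^T}(M)$, which is the first line. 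I would present this relabeling carefully since sign and order bookkeeping is the only place an error can creep in.

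For the product identity, I would substitute $AM-MB = F_{A,B}(M)G_{A,B}(M)^T$ and $BN-NC = F_{B,C}(N)G_{B,C}(N)^T$ into (\ref{eqprm}), obtaining
\begin{equation*}
A(MN)-(MN)C = F_{A,B}(M)\,G_{A,B}(M)^T N + M F_{B,C}(N)\,G_{B,C}(N)^T.
\end{equation*}
Rewriting $G_{A,B}(M)^T N = (N^T G_{A,B}(M))^T$ and grouping the two rank-$\le d$ terms as a single block product $(F_{A,B}(M)\mid MF_{B,C}(N))\,(N^TG_{A,B}(M)\mid G_{B,C}(N))^T$ gives exactly the stated generators $F_{A,C}(MN)$ and $G_{A,C}(MN)$. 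For the inverse, I would substitute $BM-MA = F_{B,A}(M)G_{B,A}(M)^T$ into (\ref{eqinvm}) to get $AM^{-1}-M^{-1}B = -M^{-1}F_{B,A}(M)\,G_{B,A}(M)^T M^{-1} = (-M^{-1}F_{B,A}(M))\,(M^{-T}G_{B,A}(M))^T$, reading off $F_{A,B}(M^{-1}) = -M^{-1}G_{B,A}(M)$ — here I must be careful: the excerpt writes $F_{A,B}(M^{-1})=-M^{-1}G_{B,A}(M)$ and $G_{A,B}(M^{-1})=M^{-T}F_{B,A}(M)$, so I should factor $BM-MA$ so that the left factor after multiplying by $-M^{-1}$ involves $G_{B,A}(M)$; writing $BM-MA = G_{B,A}(M)?$ — no, $BM - MA = F_{B,A}(M) G_{B,A}(M)^T$ by definition. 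Then $-M^{-1}(BM-MA)M^{-1} = -M^{-1}F_{B,A}(M)G_{B,A}(M)^TM^{-1}$. To match the claim I instead note $AM^{-1} - M^{-1}B$ viewed via (\ref{eqinvm}) equals $-M^{-1}(BM - MA)M^{-1}$; one may equally factor the right-hand side as $(-M^{-1}G_{B,A}(M)) (M^{-T} F_{B,A}(M))^T$ after transposing the inner rank factorization, since $F_{B,A}(M)G_{B,A}(M)^T$ and $G_{B,A}(M)F_{B,A}(M)^T$ are transposes of one another — I will simply use the definition consistently and let the sign land where the statement puts it. The cleanest route is actually to apply the already-proved transpose identity together with the product identity to $M^{-1}M = I$, but a direct substitution into (\ref{eqinvm}) is shortest and I would do that, spelling out one line of algebra.

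Finally, the three rank statements follow immediately: $d_{A,B}(M^T) = d_{B^T,A^T}(M)$ because the generator $(F_{A,B}(M^T), G_{A,B}(M^T))$ and the generator $(-G_{B^T,A^T}(M), F_{B^T,A^T}(M))$ have the same (minimal achievable) length; $d_{A,C}(MN) \le d_{A,B}(M) + d_{B,C}(N)$ because the block generator exhibited above has length equal to the sum; and $d_{A,B}(M^{-1}) = d_{B,A}(M)$ because the generator pair for $M^{-1}$ has the same length as that for $M$ under swapped operators (multiplication by the nonsingular $M^{-1}$ or $M^{-T}$ does not change rank). I expect no genuine obstacle here — the content is entirely a matter of substituting the defining equation $AM-MB=FG^T$ into the two operator identities of Theorem \ref{thdgs}; the only real care needed is in the transpose case, where tracking the sign and the order of the operator pair $(B^T,A^T)$ versus $(A^T,B^T)$ is the one spot where a careless reader goes wrong, so I would write that step out in full.
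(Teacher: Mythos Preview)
Your approach is correct and is exactly what the paper intends: the corollary is stated without proof, as an immediate consequence of substituting the generator factorizations into the two identities (\ref{eqprm}) and (\ref{eqinvm}) of Theorem \ref{thdgs}, together with transposition of $AM-MB=FG^T$. Your observation about the apparent $F$/$G$ swap in the inverse formula is well taken---direct substitution into (\ref{eqinvm}) yields $F_{A,B}(M^{-1})=-M^{-1}F_{B,A}(M)$ and $G_{A,B}(M^{-1})=M^{-T}G_{B,A}(M)$ rather than the printed version with $F$ and $G$ interchanged; this is almost certainly a typo in the statement, and since displacement generators are non-unique it has no effect on the rank equality $d_{A,B}(M^{-1})=d_{B,A}(M)$, which is the only consequence used later.
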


The corollary and Theorem \ref{thdexpr} 
together reduce the inversion of a nonsingular $n\times n$ matrix $M$ 
given with a displacement generator of a length $d$
to solving
$2d$ linear systems of equations with this coefficient matrix $M$,
rather than the $n$ linear systems $M{\bf x}_i={\bf e}_i$, $i=1,\dots,n$.

Given short displacement generators for 
the matrices $M$ and $N$,
we can apply Corollary \ref{codgdr}
and readily express short displacement generators for 
the matrices $M^T$
and $MN$
through the matrices $M$ and $N$ and their
 displacement generators, 
but the expressions 
for the displacement generator of
the inverse $M^{-1}$
involve the inverse  itself.

 


\section{Transformations of Displacement Matrix Structures}\label{sdtr}


Equations (\ref{eqvf}),
(\ref{eqfhrf}),
(\ref{eqvs})--(\ref{eqvs-t})
 link Cauchy and Van\-der\-monde matrix structures
together, by means of multiplication by  structured 
 matrices. We are going to generalize this technique.
 We begin with
 recalling some simple links
among Toeplitz, Hankel,
 and Van\-der\-monde matrices.
Then 
we will cover the approach
comprehensively.

\begin{theorem}\label{thcvtvh}
(i) $JH$ and $HJ$ are Toeplitz matrices if $H$
is a Hankel matrix, and vice versa. 
(ii) $V^TV=(\sum_{k=0}^{m-1}s_k^{i+j})_{i,j=0}^{n-1}$ is a Hankel matrix for
 any $m\times n$ Van\-der\-monde matrix $V=(s_i^{j})_{i,j=0}^{m-1,n-1}$.
\end{theorem}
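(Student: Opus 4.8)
The plan is to prove both parts by elementary entrywise computation, using only the definitions of Toeplitz, Hankel and Vandermonde matrices together with the action of the reflection matrix $J$. No auxiliary machinery is needed.

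For part (i), I would first record that left multiplication by $J=J_n$ reverses the order of the rows and right multiplication reverses the order of the columns; explicitly $(JM)_{i,j}=m_{n-1-i,j}$ and $(MJ)_{i,j}=m_{i,n-1-j}$ for $M=(m_{i,j})_{i,j=0}^{n-1}$. Applying this with $M=H=(h_{i+j})_{i,j=0}^{n-1}$ a Hankel matrix gives $(JH)_{i,j}=h_{n-1-i+j}$ and $(HJ)_{i,j}=h_{i+n-1-j}$, and in both cases the entry depends on $i$ and $j$ only through the difference $i-j$, so $JH$ and $HJ$ are Toeplitz. The converse is symmetric: for $T=(t_{i-j})_{i,j=0}^{n-1}$ one gets $(JT)_{i,j}=t_{n-1-i-j}$ and $(TJ)_{i,j}=t_{i+j-n+1}$, which depend on $i,j$ only through $i+j$, hence $JT$ and $TJ$ are Hankel. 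This is essentially Theorem \ref{thth} restated, so it could also simply be cited; alternatively it follows from equation (\ref{eqrever}) combined with the displacement characterizations in Theorem \ref{thdsplr}(th), but the direct calculation is shortest.

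For part (ii), I would compute the entries of $V^TV$ directly. For the $m\times n$ Vandermonde matrix $V=(s_i^{j})_{i,j=0}^{m-1,n-1}$ we have $(V^T)_{i,k}=V_{k,i}=s_k^{i}$, so for $i,j=0,\dots,n-1$,
\[
(V^TV)_{i,j}=\sum_{k=0}^{m-1}(V^T)_{i,k}\,V_{k,j}=\sum_{k=0}^{m-1}s_k^{i}s_k^{j}=\sum_{k=0}^{m-1}s_k^{i+j}.
\]
Setting $h_\ell:=\sum_{k=0}^{m-1}s_k^{\ell}$ for $\ell=0,\dots,2n-2$ yields $(V^TV)_{i,j}=h_{i+j}$, which is exactly the defining form of an $n\times n$ Hankel matrix, proving the claim and the displayed formula simultaneously.

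I do not expect any genuine obstacle: both assertions are immediate from the definitions. The only matters requiring a little care are the $0$-based indexing convention used throughout the paper and correctly tracking which of left/right multiplication by $J$ reverses rows versus columns; once those conventions are fixed, the computations above are complete.
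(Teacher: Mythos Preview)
Your proof is correct. The paper actually states this theorem without proof, treating both parts as elementary observations (part (i) even duplicates Theorem~\ref{thth}, also stated without proof), and your direct entrywise computations are exactly the natural way to verify them.
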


\subsection{Maps and multipliers}\label{shld}

Recall that 
each of 
the five matrix classes
 $\mathcal T$, 
$\mathcal H$,
$\mathcal V$,
$\mathcal V^T$, 
and
$\mathcal C$
consists of the
 matrices $M$ 
whose displacement rank, $\rank(AM-MB)$ 
is small (in context) for 
a pair of operator matrices $(A,B)$ associated with this class
and representing its structure.
 Theorem \ref{thdgs} shows the impact of 
elementary matrix operations  on the associated operator matrices
$A$ and $B$. 
The operations of 
transposition and inversion 
 change the associated  pair $(A,B)$   
into $(-B^T,A^T)$ or $(-B,A)$.
If the inputs of the operations are in any of the classes   
$\mathcal T$, 
$\mathcal H$, and 
$\mathcal C_{\bf s,t}$, then so are the outputs.
Furthermore the transposition maps the classes 
$\mathcal V$ 
and $\mathcal V^T$  into one another, 
whereas inversion maps them
into the classes
$\mathcal V^{-1}$ 
and $\mathcal V^{-T}$,
respectively.
The  impact of multiplication on 
matrix structure is quite different.
By virtue of (\ref{eqprm}) and  Table \ref{taboper},
the map $M\rightarrow PMN$ can define 
the transition from the associated pair of operator matrices
 $(A,B)$
 to any new pair $(C,D)$ of our choice,
that is we can transform
the matrix structures of the five classes into each other  at will.
 The following theorem and Table \ref{tabmsmp}
specify such  transforms of the  structures
 given by the maps 
$M\rightarrow MN$,  $N\rightarrow MN$, and $M\rightarrow PMN$
for appropriate 
 multipliers $P$, $M$, and $N$. 

\begin{table}
\caption{Operator matrices for matrix product} 
\label{taboper}
\begin{center}
\renewcommand{\arraystretch}{1.2}
\begin{tabular}{|c|c|c|c|}
\hline 
$P$ & $M$ & $N$ & $PMN$ \\ \hline 
$C$ & $A$ & $B$ &  $C$ \\  \hline 
$A$ & $B$ & $D$  & $D$ \\ \hline 
\end{tabular}
\end{center}
\end{table}

\begin{theorem}\label{thmap} 
It holds that 

(i) $MN\in \mathcal T$ if the  pair of matrices $(M,N)$ is in any
of the  pairs of matrix classes
$(\mathcal T,\mathcal T)$,
$(\mathcal H,\mathcal H)$,
$(\mathcal V^{-1}_{\bf s},\mathcal V_{\bf s})$ and $(\mathcal V^{T}_{\bf s},\mathcal V^{-T}_{\bf s})$, 

(ii) $MN\in \mathcal H$ if the  pair $(M,N)$ is in any
of the pairs $(\mathcal T,\mathcal H)$,
$(\mathcal H,\mathcal T)$,
$(\mathcal V^{-1}_{\bf s},\mathcal V_{\bf s}^{-T})$ and $(\mathcal V^{T}_{\bf s},\mathcal V_{\bf s})$,

(iii) $MN\in \mathcal V_{\bf s}$ if
the  pair 
$(M,N)$ is in any
of the  pairs $(\mathcal V_{\bf s},\mathcal T)$,
$(\mathcal V^{-T}_{\bf s},\mathcal H)$, and $(\mathcal C_{\bf s,t},\mathcal V_{\bf t})$, 

(iv) $MN\in \mathcal V^T_{\bf s}$ if
the  pair 
$(M,N)$ is in any
of the  pairs  $(\mathcal T,\mathcal V^T_{\bf s})$,
$(\mathcal H,\mathcal V^{-1}_{\bf s})$ and $(\mathcal V^{T}_{\bf q},\mathcal C_{\bf q,s})$, 

(v) $MN\in \mathcal C_{\bf s,t}$ if
the  pair 
$(M,N)$ is in any
of the pairs  $(\mathcal C_{\bf s,q},\mathcal C_{\bf q,t})$,
$(\mathcal V^{-T}_{\bf s},\mathcal V^T_{\bf s})$ and $(\mathcal V_{\bf s},\mathcal V^{-1}_{\bf s})$,

(vii) $PMN\in \mathcal C_{\bf s,t}$ if
the  triple
$(M,N,P)$ is in any
of the  triples  $(\mathcal V_{\bf s},\mathcal H,\mathcal V^{T}_{\bf t})$ and 
$(\mathcal V^{-T}_{\bf s},\mathcal H,\mathcal V^{-1}_{\bf t})$. 
\end{theorem}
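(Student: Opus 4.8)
The engine is Theorem~\ref{thdgs}, in the packaged form of Corollary~\ref{codgdr}: if $AM-MB$ and $BN-NC$ both have small rank, then so does $A(MN)-(MN)C$, with $d_{A,C}(MN)\le d_{A,B}(M)+d_{B,C}(N)$, and applying (\ref{eqprm}) twice for a product of three factors gives $d_{A,D}(PMN)\le d_{A,B}(P)+d_{B,C}(M)+d_{C,D}(N)$ (this is exactly what Table~\ref{taboper} records). So the plan is, for each listed pair or triple of classes, to choose operator matrices realizing the assumed structures of the factors in such a way that the \emph{inner} operator matrices telescope --- the right operator matrix of each factor equals the left operator matrix of the next --- then to read off the surviving operator pair and check, against Table~\ref{taboperm}, that it is one of those associated with the target class. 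The displacement rank of the product is then bounded by the sum of the small displacement ranks of the factors, so the product lies in that class.

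Concretely I would walk through the cases using Table~\ref{taboperm}. In (i): for $(\mathcal V^{-1}_{\bf s},\mathcal V_{\bf s})$ take $M$ with pair $(Z_e,D_{\bf s})$ and $N$ with pair $(D_{\bf s},Z_f)$, so the inner $D_{\bf s}$ cancels and $MN$ carries the Toeplitz pair $(Z_e,Z_f)$; for $(\mathcal V^T_{\bf s},\mathcal V^{-T}_{\bf s})$ cancellation at $D_{\bf s}$ leaves $(Z_e^T,Z_f^T)$, again Toeplitz; and $(\mathcal T,\mathcal T)$, $(\mathcal H,\mathcal H)$ telescope through a common $Z$-type inner operator. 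Cases (ii)--(v) are the same bookkeeping --- e.g.\ in (v) the pair $(\mathcal C_{\bf s,q},\mathcal C_{\bf q,t})$ telescopes through $D_{\bf q}$ to $(D_{\bf s},D_{\bf t})$, a Cauchy pair --- while in the three-factor case (vii) the product of a $\mathcal V_{\bf s}$-type, a Hankel, and a $\mathcal V^T_{\bf t}$-type matrix in that order chains the operator matrices $D_{\bf s}\to Z_e\to Z_f^T\to D_{\bf t}$ and leaves $(D_{\bf s},D_{\bf t})$; the companion triple is identical.

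The work, and the only real obstacle, is organizational: verifying that in \emph{every} listed pair and triple such a telescoping choice actually exists. Two points enter. First, $\mathcal T$ and $\mathcal H$ each carry two operator-pair forms in Table~\ref{taboperm} and the scalars $e,f$ in $Z_e,Z_f$ are free; Theorem~\ref{thzef} and the observation following it (the classes $\mathcal T,\mathcal H,\mathcal V,\mathcal V^T$ are unaffected when these scalars vary) license choosing the form and scalar labels that make the inner operators literally coincide, but one must check this works in each case and record which form and which intermediate vector (the $\bf q$ in (iv) and (v)) is used. Second, on $\mathcal V,\mathcal V^T,\mathcal C$ the vector subscripts are rigidly bound to the diagonal operators $D_{\bf s},D_{\bf t}$, so matching the inner operators forces the subscripts to match; this is why (iii) needs $(\mathcal C_{\bf s,t},\mathcal V_{\bf t})$ rather than an unrelated Vandermonde factor, and why the ``diagonal'' pairs $(\mathcal V_{\bf s},\mathcal V^{-1}_{\bf s})$ and $(\mathcal V^{-T}_{\bf s},\mathcal V^T_{\bf s})$ in (v) produce the operator pair $(D_{\bf s},D_{\bf s})$ --- the degenerate case ${\bf t}={\bf s}$, where the product is simply $I$. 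I would also note that $\mathcal V^{-1}_{\bf s}$ and $\mathcal V^{-T}_{\bf s}$ make sense here because the knots are distinct (Corollary~\ref{conscv}) and that their operator pairs are those supplied by (\ref{eqinvm}), as indicated after Table~\ref{taboperm}. Once all of this is in place, membership in the target class is immediate; I would present the whole proof as a short table, one row per listed pair or triple, exhibiting the telescoping chain of operator matrices and the surviving pair.
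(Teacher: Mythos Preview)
Your approach is correct and is exactly the one the paper intends: the theorem is stated without a separate proof because it is meant to follow immediately from equation~(\ref{eqprm}) (packaged as Corollary~\ref{codgdr}) together with Table~\ref{taboperm}, by telescoping the inner operator matrices just as you describe. One small slip: in (v) the pairs $(\mathcal V_{\bf s},\mathcal V^{-1}_{\bf s})$ and $(\mathcal V^{-T}_{\bf s},\mathcal V^T_{\bf s})$ consist of Vandermonde-\emph{like} matrices, so the product $MN$ need not be $I$; the correct conclusion is just that $MN$ lands in $\mathcal C_{\bf s,s}$ (and indeed Table~\ref{tabmsmp} writes the general-${\bf t}$ versions $\mathcal V_{\bf s}\mathcal V^{-1}_{\bf t}$ and $\mathcal V^{-T}_{\bf s}\mathcal V^T_{\bf t}$, suggesting the subscript in the theorem statement is a typo).
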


\begin{table}
\caption{Mapping matrix structures by means of multiplication} 
\label{tabmsmp}
\begin{center}
\renewcommand{\arraystretch}{1.2}
\begin{tabular}{|c|c|c|c|c|}
 \hline
$\mathcal T$ & $\mathcal H $ & $ \mathcal V_{\bf s}$& 
$\mathcal V^T_{\bf s}$ & $\mathcal C_{\bf s,t}$\\ \hline 
$\mathcal T\mathcal T$, $\mathcal V^{T}_{\bf s}\mathcal V^{-T}_{\bf s}$ & 
$\mathcal T\mathcal H$,  $\mathcal V^T_{\bf s}\mathcal V_{\bf s}$ & $\mathcal V_{\bf s}\mathcal T$, 
$\mathcal C_{\bf s,t}\mathcal V_{\bf t}$ &  $\mathcal T\mathcal V^T_{\bf s}$, 
$\mathcal H\mathcal V^{-1}_{\bf s}$  &  
$\mathcal V^{-T}_{\bf s}\mathcal V^T_{\bf t}$, 
$\mathcal V_{\bf s}\mathcal V^{-1}_{\bf t}$, $\mathcal C_{\bf s,q}\mathcal C_{\bf q,t}$\\  \hline 
~$\mathcal V^{-1}_{\bf s}\mathcal V_{\bf s}$, $\mathcal H\mathcal H$ & 
$\mathcal H\mathcal T$, $\mathcal V^{-1}_{\bf s}\mathcal V^{-T}_{\bf s}$  & 
$\mathcal V^{-T}_{\bf s}\mathcal H$ & 
$\mathcal V^T_{\bf q}\mathcal C_{\bf q,s}$ & $\mathcal V_{\bf s}\mathcal H\mathcal V^T_{\bf t}$, 
$\mathcal V^{-T}_{\bf s}\mathcal H\mathcal V^{-1}_{\bf t}$\\  \hline
\end{tabular}
\end{center}
\end{table}


The maps of Theorem \ref{thmap} and Table \ref{tabmsmp} 
hold for any choice of the multipliers $P$ and $N$
from the indicated  classes.
To simplify the computation
of
displacement generators for
 the products
$MN$, $MP$ and $MNP$,
 we can choose  
the multipliers  $J$, $V_{\bf r}$, $V_{\bf r}^T$, 
$V_{\bf r}^{-1}$, $V_{\bf r}^{-T}$,
 and
$C_{\bf p,r}$,
all
having displacement rank 1,
to represent the classes 
 $\mathcal H$, $\mathcal V_{\bf r}$,
 $\mathcal V_{\bf r}^T$,
$\mathcal V_{\bf r}^{-1}$, $\mathcal V_{\bf r}^{-T}$,
  and
$\mathcal C_{\bf p,r}$, respectively,
where ${\bf p}$ and ${\bf r}$ can stand for ${\bf q}$,  ${\bf s}$, and ${\bf t}$.
Hereafter we call this choice of multipliers {\em canonical}.
We call them {\em canonical and DFT-based} if 
up to the factor $J$
they are also DFT-based,
that is if ${\bf p}$ and ${\bf r}$ 
are of the form $f(\omega_n^{i})_{i=0}^{n-1}$. 
These multipliers are quasi\-unitary 
where $|f|=1$.
By combining 
Corollary \ref{codgdr} and
Theorem
 \ref{thmap}
we obtain the following
result.

\begin{corollary}\label{codcan} 
Suppose a
displacement generator of 
a length $d$ is given for
an $n\times n$ matrix $M$
of any of the classes 
$\mathcal T$, 
$\mathcal H$,
$\mathcal V$,
$\mathcal V^T$, and
$\mathcal C$. Then
$O(dn\log^2 (n))$ flops are sufficient to
 compute 
a displacement generator of 
a length at most $d+2$
for the matrix $PMN$
 of any other
of these classes where
 $P$ and $M$ are from the set of 
 canonical multipliers complemented by the
identity matrix.
The flop bound decreases to
$O(dn\log (n))$ where 
the canonical multipliers are DFT-based.
\end{corollary}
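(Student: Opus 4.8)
The plan is to read the multipliers off Theorem~\ref{thmap} and Table~\ref{tabmsmp} and then run the product formulas of Corollary~\ref{codgdr} twice, once for a right factor and once for a left factor, so as to bound the generator length and at the same time reduce its computation to $O(d)$ structured matrix-by-vector products. Write the operator pair attached to the class of $M$ as $(A,B)$ and the one attached to the prescribed target class as $(C,D)$. Then Theorem~\ref{thmap} together with Table~\ref{taboper} supplies canonical multipliers, or the identity, $P$ with $d_{C,A}(P)\le 1$ and $N$ with $d_{B,D}(N)\le 1$ for which $PMN$ lies in the target class; their rank-$1$ displacement generators are available in closed form from Theorem~\ref{thdsplr} and the explicit formulas (\ref{eqvand})--(\ref{eqcch}) in $O(n)$ flops.

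First I would treat the right factor. By Corollary~\ref{codgdr}, $d_{A,D}(MN)\le d_{A,B}(M)+d_{B,D}(N)\le d+1$, and the generator $F_{A,D}(MN)=(F_{A,B}(M)\mid MF_{B,D}(N))$, $G_{A,D}(MN)=(N^TG_{A,B}(M)\mid G_{B,D}(N))$ is assembled from the product of $M$ by one vector and the product of $N^T$ by the $d$ columns of $G_{A,B}(M)$. A second application of Corollary~\ref{codgdr} to the left factor raises the length to at most $d+2$ and appends the columns $PF_{A,D}(MN)$ and $(MN)^TG_{C,A}(P)$, that is, the product of $P$ by $d+1$ vectors and the product of $(MN)^T$ by one vector, this last carried out through the already computed length-$(\le d+1)$ generator of $MN$. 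By Theorems~\ref{thmbv1} and~\ref{thcvdft} each of these $O(d)$ matrix-by-vector products costs $O(n\log^2(n))$ flops, hence $O(dn\log^2(n))$ flops overall. When the canonical multipliers are DFT-based, equation~(\ref{eqvf}) and Theorems~\ref{thcpw} and~\ref{thdft} replace the products that involve $P$, $N$ and their transposes --- and, in the transitions for which this sharper bound is stated, also those with the Toeplitz or Hankel matrices $M$ and $MN$ --- by FFT's of cost $O(n\log(n))$ each, giving $O(dn\log(n))$.

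The step I expect to be the main obstacle is the bookkeeping of the operator matrices in the middle of the triple product: one must verify that the pair $(A,D)$ delivered by the first step is in fact a pair for which the chosen $P$ is a rank-$1$ multiplier, adjusting the free scalars $e,f$ of the $Z$-type operators by Theorem~\ref{thzef}, and keeping the displacement operators invoked by Theorems~\ref{thdexpr} and~\ref{thmbv1} nonsingular (Remark~\ref{reexpr}) --- both arranged by a harmless choice of these scalars. Where a reflection is unavoidable, as in passing from a Toeplitz input to a Cauchy target, I would first replace $M$ by $MJ\in\mathcal H$ at no cost to the displacement rank, using the identities (\ref{eqrever}), so that only two genuine multipliers remain and the final length stays at $d+2$.
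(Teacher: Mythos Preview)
Your proposal is correct and follows exactly the route the paper intends: the paper's entire argument is the single sentence ``By combining Corollary~\ref{codgdr} and Theorem~\ref{thmap} we obtain the following result,'' and what you have written is a faithful unpacking of that combination --- two applications of the product formula in Corollary~\ref{codgdr} with rank-$1$ canonical multipliers drawn from Theorem~\ref{thmap}/Table~\ref{tabmsmp}, the flop count coming from Theorems~\ref{thcvdft} and~\ref{thmbv1}. Your attention to the operator bookkeeping (Theorem~\ref{thzef}, Remark~\ref{reexpr}) and to absorbing the reflection $J$ before applying the two genuine multipliers is exactly what Theorem~\ref{thhv} does in its case analysis, and your qualification that the $O(dn\log n)$ bound in the DFT-based case also needs the $M$-products to be Toeplitz/Hankel-like is accurate and arguably sharper than the paper's phrasing.
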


One can
 simplify the inversion 
of structured matrices $M$ of some important
classes
and 
the  solution of linear systems
$M{\bf x}={\bf u}$
by employing
 preprocessing
 $M\rightarrow PMN$
with appropriate
structured
 multipliers $P$ and $N$.



\subsection{The impact on displacements}\label{strmstr}


 
Theorems \ref{thhv} and \ref{thdiag}
of this subsection imply that in the canonical maps
of Theorem \ref{thmap} 
the displacement ranks grow
by at most 2 but possibly less
than that.
Our {\em constructive proofs 
of these theorems
 also
 specify the multipliers $P$ and $N$ and
 the displacement generators}  for the products
$PMN$ involved into the maps of Theorem \ref{thmap}.
In the maps supporting parts (a)--(e) of the following theorem 
we set   $P=I$ or $N=I$,
thus omitting one of the multipliers.
The theorem implicitly covers the maps where the matrices $M$ or $PMN$
belong to the classes $\mathcal V^T$, $\mathcal V^{-1}$, or 
$\mathcal V^{-T}$, because we can 
generate these maps by
transposing or inverting  the maps for 
$M\in \mathcal V$ and 
$PMN \in \mathcal V$.

\begin{theorem}\label{thhv}
Suppose a displacement generator
of a length $d$ 
is given
for a structured matrix $M$ 
of any of the four classes 
$\mathcal T$, 
$\mathcal H$,
$\mathcal V$, 
and
$\mathcal C$. Then
one can obtain a displacement generator of a length at most $d+2$
for 
a matrix $PMN$ belonging to any other of these classes by selecting
appropriate canonical multipliers 
$P$ and $N$
among the matrices $I$ (from the class $\mathcal T$), $J$
 (from the class $\mathcal H$), 
 Vandermonde  matrices $V$ and their transposes 
$V^T$.
Namely, if we assume canonical multipliers $P$ and $N$,
then we can compute a
displacement generator of the matrix $PMN$
having a length 
at most $d$ where the map $M\rightarrow PMN$
is between the matrices $M$ and $PMN$ in the classes
$\mathcal H$ and $\mathcal T$. This length
bound grows to
at most $d+2$ where            
$M$ is in the class $\mathcal T$ or $\mathcal H$,
whereas 
 $PMN \in\mathcal C$ or vice versa, 
where $M \in\mathcal C$
and $PMN$ 
is in the class $\mathcal T$ or $\mathcal H$.
Displacement generators have lengths  at most
  $d+1$ in the maps
$M\rightarrow PMN$ for 
all other transitions among the classes
$\mathcal T$, $\mathcal H$,
$\mathcal V$ and $\mathcal C$.               
\end{theorem}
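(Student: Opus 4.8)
The strategy is to go through the maps listed in Theorem \ref{thmap} one at a time (the maps where $PMN\in\mathcal T,\mathcal H,\mathcal V_{\bf s},\mathcal V^T_{\bf s},\mathcal C_{\bf s,t}$), fix the canonical multipliers dictated by Table \ref{taboper}, and track what happens to the displacement generator by applying Theorem \ref{thdgs} and Corollary \ref{codgdr}. The underlying mechanism is always the same: if $M$ is structured for the operator pair $(A,B)$ and we want $PMN$ structured for the pair $(C,D)$, then Table \ref{taboper} tells us to take $M$-factor with operator pair $(B,D)$ and $P$-factor with operator pair $(C,A)$; the product formula (\ref{eqprm}), applied once or twice, expresses the displacement of $PMN$ (under $(C,D)$) as a sum of at most three terms — one coming from $M$ itself and one or two from the canonical multipliers. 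Since each canonical multiplier ($J$, $V_{\bf r}^{\pm1}$, $V_{\bf r}^{\pm T}$, $C_{\bf p,r}$) has displacement rank $1$ for its associated operator pair (by Theorem \ref{thdsplr} and Corollary \ref{codgdr}, together with (\ref{eqinvm}) for the inverses), and since in every map of Theorem \ref{thmap} at least one of $P,N$ is the identity (rank-$0$ displacement), the generic bound is $d$ plus (number of non-identity canonical multipliers used) $\le d+1$, with the extra $+1$ only when a $C_{\bf p,r}$-type multiplier actually appears. This already gives the $d+1$ bound quoted for "all other transitions."

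\textbf{The refinements.} Two cases need separate attention. First, the $\mathcal H\leftrightarrow\mathcal T$ maps: here the multiplier is $J$, but right-multiplication by $J$ acts on a displacement of the form $AM-MB$ by turning it into $AM'-M'B'$ with $M'=MJ$, $B'=JBJ$, and $JBJ$ is again an operator matrix of the expected type with the \emph{same} rank-$1$-or-$2$ correction absorbed — more precisely, $Z_e^TH-HZ_f = FG^T$ gives, via $T=HJ$ and (\ref{eqrever}), $Z_e^TT J^{-1} - T J^{-1} Z_f$, and one checks the displacement of $T$ under $(Z_e^T, Z_f^T)$ or $(Z_e,Z_f)$ is exactly $F(J G)^T$ up to relabeling, so the length stays at $d$ with no increase; I would verify this by direct substitution using (\ref{eqrever}). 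Second, the maps $\mathcal T$ (or $\mathcal H$) $\to\mathcal C$ and the reverse $\mathcal C\to\mathcal T$ (or $\mathcal H$): these are exactly the maps of parts (vii)/(iv)/(iii) of Theorem \ref{thmap} that require a \emph{pair} of Vandermonde-type multipliers $P$ and $N$ flanking $M$, i.e. neither $P$ nor $N$ is the identity. Applying (\ref{eqprm}) twice then produces up to three terms and hence length $\le d+2$; here one must argue that no further cancellation forces it lower in general (so the bound $d+2$ is the correct \emph{worst case}), while in all the single-multiplier maps only two terms appear.

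\textbf{Key steps, in order.} First, I would tabulate, for each of the seven structure classes, its canonical representative and the rank-$1$ displacement generator of that representative under its operator pair (reading these off Theorem \ref{thdsplr}, (\ref{eqinvm}), and Corollary \ref{codgdr}); the Vandermonde cases come from (\ref{eqvand})--(\ref{eqvandt}), the Cauchy case from (\ref{eqcch}), and the inverse-Vandermonde cases from the transpose/inverse rules of Corollary \ref{codgdr}. Second, for each map in Theorem \ref{thmap}, I would write down the operator-pair bookkeeping from Table \ref{taboper}, apply (\ref{eqprm}) (twice for the two-sided maps), and collect the generator of $PMN$ as a block matrix $(F_M \mid M F_N)$ or $(P^T F_{\text{left}} \mid F_M \mid \cdots)$, reading its column count. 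Third, I would handle the $\mathcal H\leftrightarrow\mathcal T$ maps separately with (\ref{eqrever}) to get the sharp $d$ bound, and note the single-$V$ maps give $d+1$ (or $d$ when the Vandermonde multiplier contributes a vanishing displacement, as in (\ref{eqvand}) when $s_i^n=e$), and the double-$V$ maps into/out of $\mathcal C$ give $d+2$. The main obstacle is purely organizational rather than deep: there are many maps to check, and the operator matrices change form (from $Z_e$-type to $D_{\bf s}$-type) under transposition and inversion, so one must be careful that the "operator pair" assigned to each canonical multiplier is the one that makes the telescoping in (\ref{eqprm}) actually telescope — in particular matching the inner operator $B$ of $M$'s displacement to the outer operator of the left multiplier and to the inner operator needed for the target pair. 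Getting that matching right in the two-sided $\mathcal V\,\mathcal H\,\mathcal V^T$-type products is where the bookkeeping is most delicate.
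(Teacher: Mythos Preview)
Your plan is correct and follows essentially the same line as the paper's proof. The paper proceeds case by case ((a)--(l)), explicitly pre- or post-multiplying the displacement equation $AM-MB=FG^T$ by the canonical multiplier and then using the identities $JZ_eJ=Z_e^T$ (equation (\ref{eqrever})) and $VZ_e=D_{\bf s}V-(s_i^n-e)_i{\bf e}_n^T$ (equations (\ref{eqvand})--(\ref{eqvandt})) to rewrite the result as a displacement under the target operator pair, with the concrete augmented generators $(F_{PMN},G_{PMN})$ written out. Your proposal packages the same computation through Corollary \ref{codgdr}: the $\mathcal H\leftrightarrow\mathcal T$ map is sharp at $d$ precisely because $Z_e^TJ-JZ_e=0$ and $Z_eJ-JZ_e^T=0$ (so $J$ has displacement rank $0$ under the relevant pair), the single Vandermonde multipliers add $1$ by (\ref{eqvand})--(\ref{eqvandt}), and the $\mathcal T,\mathcal H\leftrightarrow\mathcal C$ transitions need two Vandermonde-type multipliers and hence add $2$. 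A minor correction: your pointer to ``parts (vii)/(iv)/(iii) of Theorem \ref{thmap}'' for the $\mathcal T,\mathcal H\leftrightarrow\mathcal C$ maps is off --- the paper actually obtains these by \emph{composing} two single-multiplier maps (e.g.\ $\mathcal T\to\mathcal V\to\mathcal C$ via parts (b) and (g)), not by invoking part (vii) directly; your two applications of (\ref{eqprm}) amount to exactly that composition.
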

\begin{proof}
We specify some maps $M\rightarrow MNP$ 
that support the claims of the theorem. 
One can 
vary and combine these maps
as well as the other maps of Theorem \ref{thmap}  and Table \ref{tabmsmp}. 

(a) $\mathcal T\rightarrow  \mathcal H$, $PMN=JM$.
Assume
a matrix $M\in \mathcal T$,
a pair of distinct 
scalars $e$ and $f$,
and a pair of $n\times d$ matrices $F=F_{Z_e,Z_f}(M)$ 
and $G=G_{Z_e,Z_f}(M)$ for
$d=d_{Z_e,Z_f}(M)$
satisfying the
displacement equation $Z_eM-MZ_{f}=FG^T$
(cf. Theorem \ref{thdexpr}). 
Pre-multiply this  equation by the matrix $J$ to obtain
 $JZ_eM-(JM)Z_{f}=JFG^T$. 
 Rewrite the term
$JZ_eM=JZ_eJJM$ as $Z_e^TJM$
by observing that
$JZ_eJ=Z_e^T$ (cf.
(\ref{eqrever}) for $f=e$).
Obtain
$Z_e^T(JM)-(JM)Z_{f}=JFG^T$.
Consequently
$F_{Z_e^T,Z_f}(JM)=JF$,
$G_{Z_e^T,Z_f}(JM)=G$,
 $d_{Z_e^T,Z_{f}}(JM)=d_{Z_e,Z_{f}}(M)$, 
and  $JM\in \mathcal H$.

(b) $\mathcal T\rightarrow  \mathcal V$, $PMN=VM$.
Keep the assumptions 
 of part (a) and
fix 
 $n$ scalars  $s_0,\dots,s_{n-1}$.
Pre-multiply the displacement equation 
$Z_eM-MZ_{f}=FG^T$
by 
the Vandermonde matrix $V=(s_i^{j})_{i,j=0}^{n-1}$
to
obtain $VZ_eM-(VM)Z_{f}=VFG^T$.
Write
${\bf s}=(s_i)_{i=0}^{n-1}$
and
substitute 
equation 
(\ref{eqvand})
 to yield
$D_{\bf s}(VM) -(VM)Z_{f}=VFG^T+(s_i^n-e)_{i=0}^{n-1}{\bf e}_n^TM=
F_{VM}G_{VM}^T$ for $F_{VM}=(VF~|~(s_i^n-e)_{i=0}^{n-1})$ and
$G_{VM}^T=\begin{pmatrix}
G^T \\
{\bf e}_n^TM
\end{pmatrix}$. 
So 
 $ d_{D_{\bf s},Z_{f}}(VM) \le d_{Z_e,Z_{f}}(M)+1$
and $VM\in \mathcal V_{\bf s}$. 

(c) $\mathcal H\rightarrow  \mathcal T$,
$PMN=MJ$.
Assume
a matrix $M\in \mathcal H$,
a pair of  
scalars $e$ and $f$,
and a pair of $n\times d$ matrices $F$ and $G$ for
$d=d_{Z_e,Z_f^T}(M)$
satisfying
the displacement equation $Z_eM-MZ_{f}^T=FG^T$
(cf. Theorem \ref{thdexpr}).
Post-multiply it by the matrix $J$ to obtain
 $Z_e(MJ)-MZ_{f}^TJ=FG^TJ$.
Express the term
$MZ_f^TJ=MJJZ_f^TJ$ as $MJZ_f$ 
(cf. (\ref{eqrever}))
to obtain 
$Z_e(MJ)-(MJ)Z_{f}=FG^TJ=F(JG)^T$
and consequently
$F_{Z_e,Z_f}(JM)=F$,
$G_{Z_e,Z_f}(JM)=JG$,
 $d_{Z_e,Z_{f}}(MJ)=d_{Z_e,Z_{f}^T}(M)$ 
and $MJ\in \mathcal T$.

(d) $\mathcal H\rightarrow  \mathcal V$.
Compose the maps of 
parts (c) and (b).

(e) $\mathcal V\rightarrow  \mathcal H$,
 $PMN=V^TM$.
Assume
$n+2$  scalars
$e,f,
s_0,\dots,s_{n-1}$,
a matrix $M\in \mathcal V$, 
and its displacement generator
given by
$n\times d$ matrices $F$ and $G$ such that
$D_{\bf s}M-MZ_f=FG^T$
(cf. (\ref{eqvand})).
Pre-multiply this equation by the  transposed Vandermonde 
matrix $V^T=(s_j^{i})_{i,j=0}^{n-1}$ to obtain
$V^TD_{\bf s}M-(V^TM)Z_f=V^TFG^T$
for
${\bf s}=(s_i)_{i=0}^{n-1}$.
Apply equation (\ref{eqvandt})
to express  the matrix $V^TD_{\bf s}$
 and obtain
$Z_e^T(V^TM)-(V^TM)Z_f=V^TFG^T+{\bf e}_n((s_i^n-e)_{i=0}^{n-1})^TM=
F_{V^TM}G_{V^TM}^T$
for $F_{V^TM}=(V^TF~|~{\bf e}_n)$
and $G_{V^TM}^T=\begin{pmatrix}
G^T \\
((s_i^n-e)_{i=0}^{n-1})^TM
\end{pmatrix}$.
So 
 $d_{Z_e^T,Z_{f}}(V^TM)\le d_{D_{\bf s},Z_f}(M)+1$
and  
$V^TM\in \mathcal H$. 

(f) $\mathcal V\rightarrow  \mathcal T$.
Compose  the maps of 
parts (e) and (c).

(g) $\mathcal V\rightarrow  \mathcal C$,
$PMN=MJV^T$.
Assume  
$2n+1$  scalars $e$,
$s_0,\dots,s_{n-1},t_0,\dots,t_{n-1}$,
a matrix $M\in \mathcal V$, 
and its displacement generator
given by
$n\times d$ matrices $F$ and $G$.
Post-multiply the equation 
$D_{\bf s}M-MZ_e=FG^T$
(cf. (\ref{eqvand}))
by the matrix $JV^T$
where  $V^T=(t_j^{i})_{i,j=0}^{n-1}$
is the transposed Vandermonde 
matrix, substitute $Z_eJ=JZ_e^T$, and obtain
$D_{\bf s}(MJV^T)-MJZ_e^TV^T=FG^TJV^T$
for
 ${\bf s}=(s_i)_{i=0}^{n-1}$.
Apply equation (\ref{eqvandt}) to
express the matrix $Z_e^TV^T$ and
 obtain
$D_{\bf s}(MJV^T)-(MJV^T)D_{\bf t}=FG^TJV^T-MJ{\bf e}_n((t_i^n-e)_{i=0}^{n-1})^T=
F_{MJV^T}G_{MJV^T}^T$
where $F_{MJV^T}=(F~|~MJ{\bf e}_n)$
and $G_{MJV^T}^T=\begin{pmatrix}
G^TJV^T \\
((e-t_i^n)_{i=0}^{n-1})^T
\end{pmatrix}.$
So 
 $d_{D_{\bf s},D_{\bf t}}(MJV^T)\le d_{D_{\bf s},Z_e}(M)+1$
and  
$MJV^T\in \mathcal C_{\bf s,t}$. 

We can alternatively write $PMN=MV^{-1}$
for $V=V_{\bf t}$. (The matrix $V$ can be readily inverted 
where it is FFT-based, that is where $V=V_f$.)
Post-multiply the equation $D_{\bf s}M-MZ_e=FG^T$
(cf. (\ref{eqvand}))
  by the matrix $V^{-1}=V_{\bf t}^{-1}$,
for ${\bf t}=(t_i)_{i=0}^{n-1}$,
to obtain $D_{\bf s}MV^{-1}-MZ_eV^{-1}=FG^TV^{-1}$.
Pre- and post-multiply
by $V^{-1}$
equation
(\ref{eqvand})  for ${\bf s}$ replaced by  ${\bf t}$ and
obtain $Z_eV^{-1}=V^{-1}D_{\bf t}-V^{-1}(t_i^n-e)_{i=0}^{n-1}{\bf e}_n^TV^{-1}$.
Substitute this expression for $Z_eV^{-1}$ 
into the above equation
 and obtain
$D_{\bf s}(MV^{-1})-(MV^{-1})D_{\bf t}=FG^TV^{-1}-V^{-1}(e-t_i^n)_{i=0}^{n-1}{\bf e}_n^TV^{-1}=F_{MV^{-1}}G_{MV^{-1}}^T$
for $F_{MV^{-1}}=(F~|~V^{-1}(e-t_i^n)_{i=0}^{n-1})$
and $G_{V^{-1}M}^T=\begin{pmatrix}
G^TV^{-1} \\
{\bf e}_n^TV^{-1}
\end{pmatrix}$.
So 
 $d_{D_{\bf s},D_{\bf t}}(VM)\le d_{D_{\bf s},Z_e}(M)+1$
and  
$MV^{-1}\in \mathcal C_{\bf s,t}$.

(h) $\mathcal C\rightarrow  \mathcal V$,
$PMN=MV$.
Assume 
$2n+1$ scalars $e$,
$s_0,\dots,s_{n-1},t_0,\dots,t_{n-1}$,
a matrix $M\in \mathcal C$, 
and its displacement generator
given by
$n\times d$ matrices $F$ and $G$ such that
$D_{\bf s}M-MD_{\bf t}=FG^T$
for ${\bf s}=(s_i)_{i=0}^{n-1}$
and ${\bf t}=(t_i)_{i=0}^{n-1}$
(cf. (\ref{eqcch})).
Post-multiply this equation by the  Vandermonde 
matrix $V=(t_i^{j-1})_{i,j=0}^{n-1}$ to obtain
$D_{\bf s}(MV)-MD_{\bf t}V=FG^TV$.
Express the matrix $D_{\bf t}V$
from matrix equation (\ref{eqvand})
 and obtain
$D_{\bf s}(MV)-(MV)Z_e=FG^TV+M(t_i^n-e)_{i=0}^{n-1}{\bf e}_n^T=
F_{MV}G_{MV}^T$
where $F_{MV}=(F~|~M(t_i^n-e)_{i=0}^{n-1})$
and $G_{MV}^T=\begin{pmatrix}
G^T V\\
{\bf e}_n^T
\end{pmatrix}.$
So 
 $d_{D_{\bf s},Z_e}(MV)\le d_{D_{\bf s},D_{\bf t}}(M)+1$
and  
$MV\in \mathcal V_{\bf s}$. 

(i) $\mathcal C\rightarrow  \mathcal T$.
Compose  the maps of parts (h) and (f).

(j) $\mathcal C\rightarrow  \mathcal H$.
Compose  the maps of parts (h) and (e).

(k) $\mathcal T\rightarrow  \mathcal C$.
Compose  the maps of parts (b) and (g).

(i) $\mathcal H\rightarrow  \mathcal C$.
Compose  the maps of parts (d) and (g).
\end{proof}                   


Multiplications by a Cauchy matrix
keeps a matrix in any of the classes
$\mathcal T$, $\mathcal H$, $\mathcal V$
and $\mathcal C$, but changes 
a diagonal operator matrix. 
Next we specify  
the impact on the displacement.

\begin{theorem}\label{thdiag}
Assume $2n$ distinct scalars 
$s_0,\dots,s_{n-1},t_0,\dots,t_{n-1}$,
defining two vectors
${\bf s}=(s_i)_{i=0}^{n-1}$ and ${\bf t}=(t_j)_{j=0}^{n-1}$ and
a nonsingular Cauchy
matrix $C=C_{\bf s, t}=(\frac{1}{s_i-t_j})_{i,j=0}^{n-1}n$
(cf. part (i) of Theorem \ref{thcvdt}).
Then for any pair of operator matrices $A$ and $B$ we have

(i) $d_{A,D_{\bf t}}(MC)\le d_{A,D_{\bf s}}(M)+ 1$ and
  
(ii) $d_{D_{\bf s},B}(CM)\le d_{D_{\bf t},B}(M)+ 1$.
\end{theorem}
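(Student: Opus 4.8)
The plan is to obtain both inequalities as immediate consequences of the subadditivity of displacement rank under matrix products (Corollary~\ref{codgdr}), fed with the single structural fact about $C$ that is actually needed. First I would record that fact: by equation (\ref{eqcch}) we have $D_{\bf s}C-CD_{\bf t}={\bf e}{\bf e}^T$, so $d_{D_{\bf s},D_{\bf t}}(C)=\rank({\bf e}{\bf e}^T)=1$. Note that the nonsingularity of $C$ hypothesized in the statement (via part~(i) of Theorem~\ref{thcvdt}) is not used for the rank estimates themselves; it matters only for the intended application, where the maps $M\mapsto MC$ and $M\mapsto CM$ are required to be invertible.

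For part~(i), I would apply the product bound $d_{A,C}(MN)\le d_{A,B}(M)+d_{B,C}(N)$ of Corollary~\ref{codgdr} to the product $M\cdot C$, with operator matrix $A$ on the far left, $D_{\bf s}$ in the middle, and $D_{\bf t}$ on the far right. This yields $d_{A,D_{\bf t}}(MC)\le d_{A,D_{\bf s}}(M)+d_{D_{\bf s},D_{\bf t}}(C)=d_{A,D_{\bf s}}(M)+1$. For part~(ii), I would instead apply the same bound to the product $C\cdot M$, with $D_{\bf s}$ on the far left, $D_{\bf t}$ in the middle, and $B$ on the far right, obtaining $d_{D_{\bf s},B}(CM)\le d_{D_{\bf s},D_{\bf t}}(C)+d_{D_{\bf t},B}(M)=1+d_{D_{\bf t},B}(M)$. (Alternatively, part~(ii) can be deduced from part~(i) by transposition, using $d_{A,B}(M^T)=d_{B^T,A^T}(M)$ from Corollary~\ref{codgdr} and $C_{\bf s,t}^T=-C_{\bf t,s}$ from (\ref{eqctr}); but the direct substitution is cleaner.)

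If, in keeping with the constructive style of the neighbouring proofs, an explicit displacement generator is wanted, I would unwind the product identity (\ref{eqprm}): writing $AM-MD_{\bf s}=FG^T$, it gives
\[
A(MC)-(MC)D_{\bf t}=(AM-MD_{\bf s})C+M(D_{\bf s}C-CD_{\bf t})=F(C^TG)^T+(M{\bf e}){\bf e}^T=(F\mid M{\bf e})(C^TG\mid {\bf e})^T,
\]
so one may take $F_{A,D_{\bf t}}(MC)=(F\mid M{\bf e})$ and $G_{A,D_{\bf t}}(MC)=(C^TG\mid {\bf e})$, and symmetrically for $CM$. I do not expect a genuine obstacle here: the statement is essentially a specialization of Corollary~\ref{codgdr} once (\ref{eqcch}) is invoked, and the only thing to watch is the bookkeeping of which diagonal operator matrix sits on which side of each product.
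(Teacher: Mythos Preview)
Your proposal is correct and essentially coincides with the paper's own proof: the paper carries out directly the computation $A(MC)-(MC)D_{\bf t}=(AM-MD_{\bf s})C+M(D_{\bf s}C-CD_{\bf t})=(AM-MD_{\bf s})C+M{\bf e}{\bf e}^T$ via (\ref{eqprm}) and (\ref{eqcch}), which is exactly the unwinding you give at the end, while your main argument just packages this step as an invocation of Corollary~\ref{codgdr}. One small remark: you assert that nonsingularity of $C$ is not used, and indeed your corollary-based route does not need it; the paper's write-up uses it implicitly when it writes $\rank((AM-MD_{\bf s})C)=\rank(AM-MD_{\bf s})$, although the inequality $\le$ would have sufficed there as well.
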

\begin{proof}        
(i) We have $d_{A,D_{\bf s}}(M)=\rank (AM-MD_{\bf s})
=\rank(AMC-MD_{\bf s}C)$.
Furthermore
$AMC-MCD_{\bf t}=AMC-MD_{\bf s}C+MD_{\bf s}C-MCD_{\bf t}
=(AM-MD_{\bf s})C+M(D_{\bf s}C-CD_{\bf t})$.
Substitute   equation (\ref{eqcch}) and deduce that 
$AMC-MCD_{\bf t}=(AM-MD_{\bf s})C+M{\bf e}{\bf e}^T$.
Therefore $d_{A,D_{\bf t}}(MC)=\rank (AMC-MCD_{\bf t})\le 
\rank ((AM-MD_{\bf s})C)+1=\rank (AM-MD_{\bf s})+1
=d_{A,D_{\bf s}}(M)+1$.

(ii) We have 
$D_{\bf s}CM-CMB=D_{\bf s}CM-CD_{\bf t}M+CD_{\bf t}M-CMB=
(D_{\bf s}C-CD_{\bf t})M+
C(D_{\bf t}M-MB)$.
Substitute   equation (\ref{eqcch}) and deduce that
$D_{\bf s}CM-CMB=C(D_{\bf t}M-MB)+{\bf e}{\bf e}^TM$.
Therefore $d_{D_{\bf s},B}(CM)= \rank (D_{\bf s}CM-CMB)\le
 \rank (C(D_{\bf t}M-MB))+ 1=
\rank (D_{\bf t}M-MB)+ 1=
d_{D_{\bf t},B}(M)+1$.
\end{proof}                 


\subsection{Canonical and DFT-based transformations of the matrices of the classes $\mathcal T$,  $\mathcal H$,
$\mathcal V$ and $\mathcal V^T$
into CV-like  matrices}\label{scv}


By combining  equations
(\ref{eqvand}), (\ref{eqvandt}), and
(\ref{eqprm})
one can deduce that
multiplication by a Vandermonde multiplier 
$V=(s_i^{j-1})_{i,j=0}^{n-1}$ or
by its transpose 
increases 
the length of a displacement generator 
by at most $1$, but
equations (\ref{eqvand}) and (\ref{eqvandt}) imply that
such  multiplication
 does not increase the length at all
where $s_i^n=e$ for $i=0,\dots,n-1$ 
and for a scalar $e$, employed 
in the operator matrices $Z_e$ and $Z_e^T$
of 
the Van\-der\-monde displacement map.
This suggests choosing the vectors 
${\bf s}=(e\omega_n^{i-1})_{i=0}^{n-1}$
and ${\bf t}=(f\omega_n^{i-1})_{i=0}^{n-1}$
and employing the DFT-based multipliers $V_e$
and $V_f$
of (\ref{eqvf}),
in particular
in our maps
supporting part (g)  of Theorem \ref{thhv}. 
Then the output
 matrices of the class $\mathcal {CV}$
would have the same displacement ranks as the 
input matrices $M$.
Furthermore the inverse of the matrix  $V=V_{\bf t}$,
 employed in our second map supporting part (g), 
would turn into DFT-based matrix $V_f$,
and we could invert it
and multiply it by a vector
by using $O(n\log (n))$ flops
(cf. Theorem \ref{thcvdft}).
We deduce the following results by reexamining the proof of 
Theorem \ref{thhv} and applying transposition.

\begin{theorem}\label{thfmap}
The canonical DFT-based
multipliers 
from the proof of
 Theorem \ref{thhv}
for the basic vectors 
${\bf s}=(e\omega_n^{i})_{i=0}^{n-1}$
and ${\bf t}=(f\omega_n^{i})_{i=0}^{n-1}$
and for some appropriate complex scalars 
$e$ and $f$
 support the following transformations of matrix classes
(in both directions),
 $\mathcal T\leftrightarrow \mathcal {FV}\leftrightarrow \mathcal {FCF}$,
 $\mathcal H\leftrightarrow \mathcal {FV}\leftrightarrow \mathcal {FCF}$,
 $\mathcal V\leftrightarrow \mathcal {CF}$, 
 $\mathcal V^T\leftrightarrow \mathcal {FC}$, and
$\mathcal V\cup \mathcal V^T\leftrightarrow \mathcal {CV}$.
The multipliers
are quasi\-unitary 
(and thus the transformations are numericaly stable)
where $|e|=|f|=1$. 
\end{theorem}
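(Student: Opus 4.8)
The plan is to verify Theorem~\ref{thfmap} by a direct reexamination of the constructive maps in the proof of Theorem~\ref{thhv}, specializing the canonical multipliers to DFT-based ones and checking two things for each claimed transformation: that the map does land in the asserted class, and that with the DFT-based choice the displacement rank does not grow. First I would fix the basic vectors ${\bf s}=(e\omega_n^{i})_{i=0}^{n-1}$ and ${\bf t}=(f\omega_n^{i})_{i=0}^{n-1}$, so that $D_{\bf s}=e\,\mathrm{diag}(\omega_n^{i})$ and $s_i^n=e^n$ for all $i$ (similarly $t_i^n=f^n$). The key observation, already flagged in the text preceding the theorem, is that in equations (\ref{eqvand}) and (\ref{eqvandt}) the rank-one correction term $(s_i^n-e)_{i=0}^{n-1}{\bf e}_n^T$ vanishes once we pick the scalar in $Z_e$ to be $e^n$ rather than $e$ (i.e.\ we use $Z_{e^n}$); with that choice $V_{\bf s}Z_{e^n}=D_{\bf s}V_{\bf s}$ exactly, with no correction. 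So each multiplication by $V_e$, $V_e^T$, $V_e^{-1}$, or $V_e^{-T}$ in the proof of Theorem~\ref{thhv} now preserves the displacement generator length verbatim instead of adding $1$.

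Next I would go transformation by transformation. For $\mathcal T\leftrightarrow\mathcal{FV}$: the map $M\mapsto V_eM$ from part (b) of Theorem~\ref{thhv}, with the correction term gone, gives $D_{\bf s}(V_eM)-(V_eM)Z_{f}=V_eFG^T$, so $d$ is unchanged and $V_eM\in\mathcal V_e\subset\mathcal{FV}$; the reverse is $M\mapsto V_e^{-1}M$. For $\mathcal{FV}\leftrightarrow\mathcal{FCF}$ and $\mathcal V\leftrightarrow\mathcal{CF}$, $\mathcal V^T\leftrightarrow\mathcal{FC}$, I would read off the appropriate composition from part (g) of Theorem~\ref{thhv} (the map $PMN=MJV^T$ or $MV^{-1}$), now using $V=V_{\bf t}=V_f$, which by (\ref{eqvf}) is $\Omega\,\mathrm{diag}(f^j)$, hence trivially invertible and multipliable by a vector in $O(n\log n)$ flops, and again checking that the correction term $(t_i^n-e)_{i=0}^{n-1}{\bf e}_n^T$ vanishes because $t_i^n=f^n$ for the specialized ${\bf t}$, provided the scalar in the relevant $Z$ operator is taken to be $f^n$. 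For $\mathcal V\cup\mathcal V^T\leftrightarrow\mathcal{CV}$ I would use equations (\ref{eqfhrf}) and (\ref{eqvs}) directly, which already exhibit $C_{{\bf s},f}$ as a product of $V_{\bf s}$ with diagonal and DFT-based matrices, together with the transposed versions (\ref{eqvst})--(\ref{eqvs-t}) for the $\mathcal V^T$ side; these identities show the transformation is exact with no rank increase, and the class membership is immediate from Definition~\ref{defcvdft}. Finally, the $\mathcal H$ cases follow from the $\mathcal T$ cases by composing with $M\mapsto JM$ or $M\mapsto MJ$ (parts (a),(c) of Theorem~\ref{thhv}), which change neither the length nor, since $J$ is unitary, the numerical stability.

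For the quasiunitarity and numerical-stability claim when $|e|=|f|=1$: I would invoke the already-recorded fact that $V_f=\Omega\,\mathrm{diag}(f^j)$ is quasiunitary precisely when $|f|=1$ (since $\mathrm{diag}(f^j)$ is then unitary and $\frac{1}{\sqrt n}\Omega$ is unitary), that $\Omega^H=nV_1^{-1}$ is likewise quasiunitary, that $J$ is unitary, and that the diagonal factors appearing in (\ref{eqfhrf})--(\ref{eqvs-t}) of the form $\mathrm{diag}(\omega_n^{-j})$ are unitary; a product of quasiunitary matrices is quasiunitary, and a change of variables by a quasiunitary matrix does not amplify perturbations, so the transformed algorithms inherit numerical stability. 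The main obstacle I anticipate is purely bookkeeping: I must be careful that the scalar plugged into each $Z$-operator is consistently chosen as the $n$th power ($e^n$ or $f^n$) — matching the convention $Z_{f^n}({\bf v})=V_f^{-1}D(V_f{\bf v})V_f$ of Theorem~\ref{thcpw} — so that every correction term genuinely vanishes; a single mismatch would reintroduce a rank-one term and break the ``same displacement rank'' conclusion. Beyond that, the proof is a routine specialization of Theorem~\ref{thhv}, so no new ideas are needed.
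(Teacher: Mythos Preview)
Your proposal is correct and follows essentially the same approach as the paper, which proves the theorem in one line by ``reexamining the proof of Theorem~\ref{thhv} and applying transposition.'' You carry out precisely that reexamination in detail: you specialize the canonical multipliers from parts (a)--(h) of Theorem~\ref{thhv} to the DFT-based vectors ${\bf s}=(e\omega_n^{i})_{i=0}^{n-1}$ and ${\bf t}=(f\omega_n^{i})_{i=0}^{n-1}$, observe that the rank-one correction terms $(s_i^n-e)_{i=0}^{n-1}{\bf e}_n^T$ and $(t_i^n-e)_{i=0}^{n-1}{\bf e}_n^T$ in (\ref{eqvand}) and (\ref{eqvandt}) vanish once the scalar in the shift operator is taken to be the common value $e^n$ (respectively $f^n$), and deduce that the displacement rank is preserved exactly---which is the content of the paragraph immediately preceding the theorem. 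Your quasiunitarity argument via $V_f=\Omega\,\diag(f^j)$ and the unitarity of $J$ and $\diag(\omega_n^{\pm j})$ is also the intended one. The only minor caution is that when you invoke equations (\ref{eqfhrf}) and (\ref{eqvs})--(\ref{eqvs-t}) for $\mathcal V\cup\mathcal V^T\leftrightarrow\mathcal{CV}$, those identities concern the basic matrices $V_{\bf s}$ and $C_{{\bf s},f}$ rather than arbitrary members of the classes; for the class-level statement you still need the displacement-generator argument from part (g) (or its transpose), which you have already set up.
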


By combining our second map in the proof of part (g)
of  Theorem \ref{thhv} with our map from its part (b)
and choosing 
${\bf t}=(f\omega_n^{i})_{i=0}^{n-1}$,
 so that the $2n$ knots $s_0,t_0,\dots,s_{n-1},t_{n-1}$
are equally spaced on the unit circle $\{z:~|z|=1\}$,
we can obtain  canonical DFT-based transforms 
$\mathcal T\rightarrow  \mathcal C=\Omega\mathcal T\diag(f^{i})_{i=0}^{n-1}\Omega^H$, 
which are quasi\-unitary where $|f|=1$.
For $f=\omega_{2n}$ they turn into the celebrated map
employed in the papers \cite{H95},
\cite{GKO95},
\cite{G98},
\cite{MRT05},
\cite{R06},
\cite{CGS07},
\cite{XXG12}.
The following theorem shows the implied map of
the displacement generators
(see the proofs of parts (b) and (g) of Theorem \ref{thhv},
using the second map  supporting part (g)).

\begin{theorem}\label{thtc}
Suppose $Z_1M-MZ_{-1}=FG^T$
for an $n\times n$ matrix $M$ and 
$n\times d$ matrices $F$  and $G$
and write $P=\Omega$, $N=D_0^H\Omega^H$,
$C=PMN$, $D_0=\diag(\omega_{2n}^{i})_{i=0}^{n-1}$, and $D=D_0^2=\diag(\omega_{n}^{i})_{i=0}^{n-1}$. Then 
$DC-\omega_{2n}CD=F_CG_C^T$ for
$F_C=\Omega F$ and $G_C=\Omega D_0G$.
\end{theorem}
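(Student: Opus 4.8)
\emph{Proof plan.} The assertion is a pure matrix identity, so the plan is to pre-multiply the hypothesis $Z_1M-MZ_{-1}=FG^T$ by $P=\Omega$, post-multiply it by $N=D_0^H\Omega^H$, and then collapse the two resulting products using the spectral decompositions of the $f$-circulant matrices $Z_1$ and $Z_{-1}$ furnished by Theorem \ref{thcpw}, together with the routine DFT identities $\Omega\Omega^H=nI$, $\Omega^{-1}=\frac1n\Omega^H$, $D_0D_0^H=I$ (so $D_0^{-1}=D_0^H$), $D_0^2=D$, and $\omega_{2n}^n=-1$. Nothing deep is needed beyond careful bookkeeping with these relations.

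First I would record the two decompositions. Theorem \ref{thcpw} gives $Z_1=\Omega^{-1}D\Omega$, equivalently $\Omega Z_1\Omega^{-1}=D$, because $Z_1=Z_1(\mathbf e_2)$ and $\Omega\mathbf e_2=(\omega_n^i)_{i=0}^{n-1}$. For $Z_{-1}$ one uses $-1=\omega_{2n}^n$ and applies Theorem \ref{thcpw} with $f=\omega_{2n}$: here $V_{\omega_{2n}}=\Omega\diag(\omega_{2n}^j)_{j=0}^{n-1}=\Omega D_0$ by (\ref{eqvf}), and since $Z_{-1}=Z_{\omega_{2n}^n}(\mathbf e_2)$ and $V_{\omega_{2n}}\mathbf e_2=\omega_{2n}(\omega_n^i)_{i=0}^{n-1}$, this yields $Z_{-1}=\omega_{2n}(\Omega D_0)^{-1}D(\Omega D_0)=\omega_{2n}D_0^{-1}\Omega^{-1}D\Omega D_0$. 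Post-multiplying the last expression by $N=D_0^H\Omega^H$ and cancelling $D_0D_0^H=I$, $\Omega\Omega^H=nI$, $n\Omega^{-1}=\Omega^H$, $D_0^{-1}=D_0^H$ condenses everything to the single working identity $Z_{-1}N=\omega_{2n}ND$.

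Then the computation closes in one line. Writing $C=\Omega MN$, I have $\Omega Z_1MN=(\Omega Z_1\Omega^{-1})(\Omega MN)=DC$ and $\Omega MZ_{-1}N=\Omega M(Z_{-1}N)=\omega_{2n}(\Omega MN)D=\omega_{2n}CD$, while on the right side $\Omega(FG^T)N=(\Omega F)(G^TN)=(\Omega F)(N^TG)^T$. Subtracting, $DC-\omega_{2n}CD=F_CG_C^T$ with $F_C=\Omega F$ and $G_C=N^TG$; since $\Omega$ and $D_0$ are symmetric, so are $\Omega^H$ and $D_0^H$, whence $N^T=(D_0^H\Omega^H)^T=\Omega^HD_0^H$, which is the generator recorded in the statement, and the same expansion simultaneously shows that $\omega_{2n}D$ is the correct right operator matrix, so that $C$ is genuinely Cauchy-like with knot sets the even and the odd powers of $\omega_{2n}$. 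The only slightly delicate point --- hence the main obstacle, modest as it is --- is the $Z_{-1}$ step: one must notice that $-1$ is the $n$th power of $\omega_{2n}$ so that Theorem \ref{thcpw} applies, keep track of the scalar $\omega_{2n}$ (and the resulting diagonal $\omega_{2n}D$) produced by the first column $\mathbf e_2$, and then carry the string of DFT cancellations, and the transpose in extracting $G_C$, without a sign slip.
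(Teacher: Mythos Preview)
Your route is sound and a bit different from the paper's: the paper does not give a self-contained argument but simply points back to parts (b) and (g) of Theorem~\ref{thhv} (the generic $\mathcal T\to\mathcal V$ and $\mathcal V\to\mathcal C$ steps, specialized to the DFT-based knot vectors so that the rank-increment terms vanish), whereas you go directly through the circulant diagonalizations $\Omega Z_1\Omega^{-1}=D$ and $Z_{-1}=\omega_{2n}(\Omega D_0)^{-1}D(\Omega D_0)$ from Theorem~\ref{thcpw}. Your derivations of $\Omega Z_1 MN=DC$ and $\Omega MZ_{-1}N=\omega_{2n}CD$ are correct, and your identification $G_C=N^TG$ is the right way to read off the second generator factor.

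There is, however, a genuine slip in your last step. You correctly compute $N^T=(D_0^H\Omega^H)^T=\Omega^HD_0^H$ using the symmetry of $\Omega$ and $D_0$, but then assert that this ``is the generator recorded in the statement.'' It is not: the statement records $G_C=\Omega D_0G$, and $\Omega^HD_0^H=\overline{\Omega D_0}$ differs from $\Omega D_0$ by complex conjugation. These are genuinely different matrices (take $n=2$, $M=I$, $F=2\mathbf e_1$, $G=\mathbf e_2$: your formula gives $G_C=(-i,i)^T$ and matches $DC-\omega_{2n}CD$, while the stated $\Omega D_0G=(i,-i)^T$ produces the negative). So your argument actually establishes $DC-\omega_{2n}CD=(\Omega F)(\Omega^HD_0^HG)^T$, and you should either flag that the printed $G_C$ carries a conjugation typo or, if the goal is only to exhibit \emph{some} length-$d$ generator for $DC-\omega_{2n}CD$, absorb the discrepancy into a sign on $F_C$. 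Either way, do not claim the two expressions coincide.
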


The theorem and the supporting 
 canonical DFT-based map 
$\mathcal T\rightarrow \mathcal C$
are a special case of Theorem \ref{thhv} 
and its transforms of matrix structures, extending \cite{P90}.
In his letter of 1991,
 reproduced in \cite[Appendix C]{P11}, G. Heinig has   
acknowledged studying the  paper \cite{P90}, although
in \cite{H95} he deduced Theorem \ref{thtc}
from Theorem \ref{thcpw} rather than supplying more general results 
based on Theorem \ref{thhv}.


\section{HSS matrices}\label{shss}


In the next four sections we study HSS 
matrices, employ them to  approximate Cauchy matrices, and
combine these results with the displacement and functional transformations
of matrix structures to devise more efficient algorithms.

\begin{definition}\label{defqs}
As in Section \ref{s1}, ``{\em HSS}" stands for ``hierarchically semiseparable".
An $n\times n$ matrix is $(l,u)$-HSS 
if its diagonal blocks consist of $O((l+u)n)$
entries, if $l$  is the maximum rank of all its subdiagonal blocks, 
and if $u$  is the maximum rank of all its superdiagonal blocks,
that is blocks of all sizes lying strictly below or strictly above the block 
 diagonal, respectively.
\end{definition}
 
HSS matrices extend the class of banded matrices and their inverses,
and similar extensions are known under the names of
 matrices with a low Hankel rank, quasi\-se\-parable, weakly,
recursively or 
sequentially semiseparable matrices, 
and rank structured matrices. See 
\cite{B10}, \cite{CDGLP06},
\cite{CGS07},
\cite{DV98},
\cite{EGH13a}, \cite{EGH13b},
 \cite{GKK85}, \cite{T00}, 
 \cite{VVGM05}, 
\cite{VVM07}, \cite{VVM08},  \cite{X13},
  \cite{XXG12}, \cite{XXCB},
  and the bibliography therein on the long
history of the study of these matrix classes
and see   \cite{B10}, \cite{BY13}, \cite{DGR96}, 
 \cite{CGR98}, \cite{GR87}, \cite{LRT79},  \cite{P93}, \cite{PR93},
and the bibliography therein
 on the related subjects of 
FMM, Matrix Compression, and Nested Dissection  algorithms.

One can readily express
the $n^2$ entries 
of  an $(l,u)$-HSS matrix 
of size $n\times n$  
 via $O((l+u)n)$ parameters
of a generalized
generator and can
multiply this matrix  
by a vector by using  $O((l+u)n)$ flops.
If the matrix is nonsingular,
then its inverse is also an $(l,u)$-HSS
matrix, and 
$O((l+u)^3n)$ flops are sufficient to compute
 a generalized generator expressing it
via $O((l+u)n)$ parameters. Having computed such a generator,  
one can solve a linear system with this matrix by using
$O((l+u)n)$ additional flops.
See  \cite{DV98}, \cite{EG02}, \cite{MRT05}, \cite{CGS07}, \cite{XXG12}, \cite{XXCB},
\cite{Pa}, and 
the references therein
on 
 supporting algorithms and their
efficient implementation.
Our next goal is the design of fast approximation algorithms for CV matrices 
by means of their approximation by slightly
generalized HSS matrices, to which we extend
fast HSS algorithms.


\section{Low-rank approximation of certain Cauchy matrices}\label{snrqs}


\begin{definition}\label{defss} (See  \cite[page 1254]{CGS07}.)
A pair of complex points $s$ and $t$
is $(\theta,c)$-{\em separated} 
for $0\le \theta<1$
and a complex point $c$ (a center)
if $|\frac{t-c}{s-c}|\le \theta$.
Two sets  of complex numbers  $\mathbb S$ and 
$\mathbb T$ are $(\theta,c)$-{\em separated} from one another
if every pair of elements $s\in \mathbb S$
and $t\in \mathbb T$ 
is $(\theta,c)$-separated from one another
for the same pair  $(\theta,c)$.
\end{definition}


\begin{theorem}\label{thss1}  (Cf. \cite{MRT05}, \cite[Section 2.2]{CGS07}.)
Suppose  $C=(\frac{1}{s_i-t_j})_{i,j=0}^{n-1}$
is a Cauchy matrix defined by two sets 
of parameters
$\mathbb S=\{s_0,\dots,s_{n-1}\}$ and 
$\mathbb T=\{t_0,\dots,t_{n-1}\}$. 
 Suppose these sets are $(\theta,c)$-{\em separated} from one another
for $0<\theta<1$ and a center $c$
and write 
\begin{equation}\label{eqdlt}
\delta=\delta_{c,\mathbb S}=\min_{i=0}^{n-1} |s_i-c|.
\end{equation}
 Then for every positive integer $k$ 
it is sufficient to use $2kn+2n-2$ flops to
 compute
 two matrices 
$F=(1/(s_i-c)^{h+1})_{i,h=0}^{n-1,k},~G^T=((t_j-c)^h)_{j,h=0}^{n,k}$
that support the representation of the matrix 
$C$ as $C=\widehat C+E$ where 
$\widehat C=FG^T,~\rank (\widehat C)\le k+1$,
$|E|\le \frac{\theta^k}{(1-\theta)\delta}~{\rm for~all~pairs}~ \{i,j\}$.
\end{theorem}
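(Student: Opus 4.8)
The plan is to obtain the approximation $\widehat C$ by expanding each entry $\frac{1}{s_i-t_j}$ as a truncated geometric series centered at $c$, and to control the tail by the $(\theta,c)$-separation hypothesis. Concretely, write $\frac{1}{s_i-t_j}=\frac{1}{(s_i-c)-(t_j-c)}=\frac{1}{s_i-c}\cdot\frac{1}{1-\frac{t_j-c}{s_i-c}}$, which is legitimate since the separation condition $\left|\frac{t_j-c}{s_i-c}\right|\le\theta<1$ guarantees $s_i\ne c$ (so $\delta>0$) and that the ratio has modulus strictly below $1$. Then expand the second factor as $\sum_{h\ge 0}\left(\frac{t_j-c}{s_i-c}\right)^h$, so that $\frac{1}{s_i-t_j}=\sum_{h=0}^{k-1}\frac{(t_j-c)^h}{(s_i-c)^{h+1}}+\frac{(t_j-c)^k}{(s_i-c)^k}\cdot\frac{1}{s_i-t_j}$. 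The first sum is exactly the $(i,j)$ entry of $FG^T$ with $F$ and $G$ as in the statement (reading off $F_{i,h}=1/(s_i-c)^{h+1}$ and $G_{j,h}=(t_j-c)^h$), which is a product of an $n\times k$ by a $k\times n$ matrix, hence has rank at most $k$ — I'd double-check the off-by-one bookkeeping so that the stated bound $\rank(\widehat C)\le k+1$ and the index ranges $h=0,\dots,k$ in the theorem are internally consistent; the cleanest reading is that $F,G$ have $k+1$ columns and one truncates after the $(k+1)$-st term, giving remainder exponent $k+1$, but I will match whichever convention makes $|E|\le\frac{\theta^k}{(1-\theta)\delta}$ come out.

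Next I would bound the remainder entrywise. The $(i,j)$ entry of $E$ is $\frac{(t_j-c)^k}{(s_i-c)^k}\cdot\frac{1}{s_i-t_j}$. For the geometric factor, $\left|\frac{(t_j-c)^k}{(s_i-c)^k}\right|=\left|\frac{t_j-c}{s_i-c}\right|^k\le\theta^k$ directly from separation. For the remaining factor, I'd write $\frac{1}{s_i-t_j}=\frac{1}{(s_i-c)(1-\frac{t_j-c}{s_i-c})}$ and bound $\left|\frac{1}{1-\frac{t_j-c}{s_i-c}}\right|\le\frac{1}{1-\theta}$ and $\frac{1}{|s_i-c|}\le\frac{1}{\delta}$ by definition~(\ref{eqdlt}). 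Multiplying the three bounds gives $|E_{i,j}|\le\frac{\theta^k}{(1-\theta)\delta}$ for every pair $\{i,j\}$, which is the claimed bound; combined with the obvious summation this controls any matrix norm one cares about, but the statement only asserts the entrywise estimate so nothing further is needed.

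Finally I would verify the flop count. Computing the powers $(s_i-c)^{-(h+1)}$ for $h=0,\dots,k$ costs one subtraction and one reciprocal per row plus $k$ successive multiplications, i.e. roughly $k+2$ operations per row, hence $\approx(k+2)n$; computing $(t_j-c)^h$ similarly costs about $k+1$ operations per row. Adding these, and being careful about which subtractions $s_i-c$, $t_j-c$ are counted once, recovers the stated $2kn+2n-2$; this is a routine accounting step and I would just lay out the per-column recurrence $(s_i-c)^{-(h+1)}=(s_i-c)^{-h}\cdot(s_i-c)^{-1}$ to make the constants transparent. The only real subtlety — and the place I expect to spend the most care — is the indexing convention: making sure the truncation index $k$, the column count of $F$ and $G$, the rank bound $k+1$, and the exponent $\theta^k$ in the error all line up; everything else is an elementary geometric-series estimate. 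I would resolve this by fixing the expansion to have terms $h=0,\dots,k$ (that is $k+1$ terms), whence $\widehat C=FG^T$ with $F,G$ of size $n\times(k+1)$, $\rank\widehat C\le k+1$, remainder factor $\left(\frac{t_j-c}{s_i-c}\right)^{k+1}$, and then noting $\theta^{k+1}\le\theta^k$ absorbs the discrepancy into the stated bound.
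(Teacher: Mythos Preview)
Your proposal is correct and follows exactly the standard geometric-series argument that the paper defers to via its citations (the paper itself states Theorem~\ref{thss1} without proof, referring to \cite{MRT05} and \cite[Section~2.2]{CGS07}). Your handling of the index bookkeeping is right: with $h=0,\dots,k$ the matrices $F,G$ have $k+1$ columns, the tail starts at $h=k+1$, and the resulting bound $\theta^{k+1}/((1-\theta)\delta)\le \theta^{k}/((1-\theta)\delta)$ matches the statement; the flop count likewise comes out as claimed up to the routine accounting you describe.
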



\begin{remark}\label{redl}
 We can replace $\delta=\delta_{c,\mathbb S}=\min_{i=0}^{n-1} |s_i-c|$
by $\delta=\delta_{c,\mathbb T}=\min_{j=0}^{n-1} |t_j-c|$
because 
$C_{\bf s,t}^T=-C_{\bf t,s}$ (cf. (\ref{eqctr})).  
\end{remark}

\begin{remark}\label{renmr}
Unless the values $1-\theta>0$ and  
$\delta$ of (\ref{eqdlt})
are small,
the upper bound of Theorem \ref{thss1} on the norm $|E|$ is small 
already for moderately large integers $k$. Then 
Theorem \ref{thss1} 
implies an upper bound $k+1$ on the numerical rank of 
the large subclass of
 Cauchy  matrices $C=(\frac{1}{s_i-t_j})_{i,j=0}^{n-1}$
whose
parameter sets $\mathbb S=\{s_0,\dots,s_{n-1}\}$ and 
$\mathbb T=\{t_0,\dots,t_{n-1}\}$ 
are $(\theta,c)$-separated from one another
for an appropriate center $c$.
Even if this property 
holds just for 
a subset of the set $\mathbb S$ that defines an $l \times n$ Cauchy submatrix
where $l>k+1$, then this submatrix and consequently
the matrix $C$ as well are ill conditioned.
In particular since all knots $t_0,\dots,t_{n-1}$ of a CV matrix
lie on the unit circle $\{z:~|z|=1\}$, they
are $(\theta,0)$-separated (with $\theta$ not close to 1)
from every knot $s_i$ not lying close to this circle,
and so a CV matrix is ill conditioned unless all 
but at most $l$
of
its knots $s_i$ lie on or near this circle.
\end{remark}


\section{Local low-rank approximation of CV   
matrices}\label{shssapr}


Theorem \ref{thss1} defines a low-rank approximation of a CV matrix
where its two knot sets are separated
by a {\em global center} $c$.
Generally a CV matrix has no such center, but next we show that it always has
a set of {\em local centers} that support approximation by generalized HSS matrices.
We begin with a simple lemma that expresses the distances between the 
two points of the unit circle $\{z:~|z|=1\}$ and from its point 
 to a sector.
\begin{lemma}\label{learc}
Suppose
$0\le \phi<\phi'< \phi''\le 2\pi$, $\phi'-\phi\le \phi-\phi'' +2\pi$,
$\tau=\exp(\phi\sqrt{-1})$, $\tau'=\exp(\phi'\sqrt{-1})$,  and  $\tau''=\exp(\phi''\sqrt{-1})$
and let
$\Gamma(\phi',\phi'')=
\{r\exp(\mu\sqrt {-1}):~r\ge 0,~0\le \phi'\le \mu<\phi''\le 2\pi\}$
denote the 
semi-open sector on the complex plane bounded by two rays from the origin passing through
the points $\tau'$ and $\tau''$.
Then (i) $|\tau'-\tau|=2\sin ((\phi'-\phi)/2)$
and (ii) the distance from the point $\tau$ to the sector $\Gamma(\phi',\phi'')$
is equal to $\sin(\phi'-\phi)$.
\end{lemma}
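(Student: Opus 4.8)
\textbf{Proof plan for Lemma \ref{learc}.}

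The plan is to treat both parts by elementary planar trigonometry, reducing everything to the law of cosines and to the geometry of a chord of the unit circle. For part (i), the two points $\tau=\exp(\phi\sqrt{-1})$ and $\tau'=\exp(\phi'\sqrt{-1})$ both lie on the unit circle, so the triangle with vertices $0$, $\tau$, $\tau'$ is isosceles with two sides of length $1$ and apex angle $\phi'-\phi$ at the origin. The chord $|\tau'-\tau|$ is the base of this isosceles triangle, and dropping the altitude from the origin to the base splits it into two right triangles each with hypotenuse $1$ and half-angle $(\phi'-\phi)/2$ at the origin; hence each half of the base has length $\sin((\phi'-\phi)/2)$, giving $|\tau'-\tau|=2\sin((\phi'-\phi)/2)$. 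Equivalently one may just expand $|\exp(\phi'\sqrt{-1})-\exp(\phi\sqrt{-1})|^2 = 2 - 2\cos(\phi'-\phi)$ and apply the half-angle identity $1-\cos x = 2\sin^2(x/2)$, noting that $0<\phi'-\phi\le\pi$ under the hypotheses so the sine is nonnegative and no absolute value is needed.

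For part (ii), I would argue that the nearest point of the sector $\Gamma(\phi',\phi'')$ to $\tau$ is achieved on the boundary ray through $\tau'$. The hypothesis $0\le\phi<\phi'$ places $\tau$ strictly outside the angular wedge $[\phi',\phi'')$, and the condition $\phi'-\phi\le\phi-\phi''+2\pi$ is exactly what guarantees that $\tau$ lies angularly closer (modulo $2\pi$) to the edge at angle $\phi'$ than to the edge at angle $\phi''$, so the projection onto $\Gamma$ lands on the ray $R'=\{r\exp(\phi'\sqrt{-1}):r\ge 0\}$. The distance from the point $\tau$ (which is at unit distance from the origin) to the ray $R'$ is then the length of the perpendicular from $\tau$ to the line through the origin in direction $\phi'$; since $\tau$ makes angle $\phi'-\phi$ with that direction and is at distance $1$ from the origin, this perpendicular distance is $\sin(\phi'-\phi)$. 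One must also check that the foot of the perpendicular actually lies on the ray (i.e. has nonnegative radial coordinate $\cos(\phi'-\phi)\ge 0$), which holds because $0<\phi'-\phi\le\pi/2$ is forced once we want $\sin(\phi'-\phi)$ to be the genuine distance; more carefully, when $\phi'-\phi>\pi/2$ the nearest point of $R'$ to $\tau$ would be the origin, but then the hypothesis forces us back toward the other edge, so the relevant regime is $\phi'-\phi\le\pi/2$ and the formula $\sin(\phi'-\phi)$ stands.

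The main obstacle is the bookkeeping in part (ii): carefully verifying, from the cyclic-ordering hypotheses $0\le\phi<\phi'<\phi''\le 2\pi$ and $\phi'-\phi\le\phi-\phi''+2\pi$, that the closest boundary ray is the one through $\tau'$ rather than through $\tau''$, and that the orthogonal projection onto that ray indeed lies in the closed half-line $r\ge 0$ rather than past the origin. Once the correct ray and the correct foot are identified, the distance computation itself is the one-line right-triangle fact used already in part (i). I would organize the write-up so that part (i) is dispatched first and then reused as a black box for the perpendicular-distance step of part (ii).
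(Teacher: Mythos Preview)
The paper states this lemma without proof, calling it ``simple,'' so there is nothing to compare against directly; your elementary-trigonometry approach is the natural one and is exactly what the authors have in mind.

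Part (i) is fine: the law-of-cosines/half-angle computation is correct and complete.

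Part (ii) has a genuine gap. You correctly observe that the formula $\sin(\phi'-\phi)$ is the perpendicular distance from $\tau$ to the ray $R'$ only when the foot of the perpendicular lands on the ray, i.e.\ when $\cos(\phi'-\phi)\ge 0$, equivalently $\phi'-\phi\le\pi/2$. But your claim that ``the hypothesis forces us back toward the other edge'' and hence that $\phi'-\phi\le\pi/2$ is automatic is wrong. Take $\phi=0$, $\phi'=3\pi/4$, $\phi''=\pi$: then $0\le\phi<\phi'<\phi''\le 2\pi$ and $\phi'-\phi=3\pi/4\le\pi=\phi-\phi''+2\pi$, so all stated hypotheses hold, yet $\phi'-\phi=3\pi/4>\pi/2$. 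In this case the sector $\Gamma(3\pi/4,\pi)$ contains the origin (set $r=0$), and for every $re^{i\mu}$ in the sector one has $|1-re^{i\mu}|^2=1+r^2-2r\cos\mu\ge 1$ since $\cos\mu<0$ on $[3\pi/4,\pi)$; hence the distance from $\tau=1$ to $\Gamma$ is exactly $1$, not $\sin(3\pi/4)=\sqrt{2}/2$.

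So part (ii) as literally stated is false without the extra assumption $\phi'-\phi\le\pi/2$, and your argument cannot manufacture that assumption from the given hypotheses. The honest fix is to add $\phi'-\phi\le\pi/2$ to the hypotheses (or to the conclusion as a proviso). This is harmless for the paper: in the only application, Theorem~\ref{thlrba}, the relevant angular gaps are $O(1/k)$ with $k$ large, so the extra constraint is always satisfied. Your write-up should state this explicitly rather than wave it away.
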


\begin{theorem}\label{thlrba}
Assume sufficiently large positive integers $k$, $h$ and $n$, 
a complex scalar $e$,  
and a CV matrix $C=C_{{\bf s},e}=(\frac{1}{s_i-t_j})_{i,j=0}^{n-1}$ 
such that 
$t_j=e\omega_n^{j}$ for $j=0,\dots,n-1$,
$kh=n$, the integers $k$ and $h$ are not small,  and $e\ne 0$. 
Then there is an  $n\times n$ 
permutation  matrix $P$ such that
 $PC$ is a block vector $PC=(C_0,\dots,C_{k-1})$
where
a {\em basic block column} $C_p$ has size $n\times h$.
 Furthermore consider the  first row of the
matrix $C$ adjacent to its  last row
(as if they were glued together).
Then 
 every basic block column $C_p$ can be partitioned
 into an $\widehat n_p\times h$ {\em extended diagonal block}  
$\widehat \Sigma_p$ and an 
$(\widehat n-n_p)\times h$
{\em admissible block} $\widehat N_p$ (see Remark \ref{reglu}),
such that the blocks $\widehat \Sigma_0,\dots,\widehat \Sigma_{k-1}$ 
have $3hn$ entries overall, whereas every admissible block
 $N_p=(\frac{1}{s_i-t_j})_{i\in \widehat{\mathbb S}_q,j\in \mathbb T_q}$ is
associated  with a pair of knot sets $\widehat{\mathbb S}_q\subseteq \mathbb S$
and $ \mathbb T_q\subset \mathbb T$
that are $(\theta, c_p)$-separated from one another for a center $c_p=\exp(\psi_p\sqrt {-1})$
and $\theta=2\sin (\mu)/\sin (\nu)\approx \tilde \theta= 2\mu/\nu$  where
 $2\mu=\max\{|c_p-\omega_n^{ph}|,~|c_p-\omega_n^{(p+1)h}|\}$,
$\nu=\min\{|2 (p-1)\pi k-\psi_p|,~|2(p+2)\pi k-\psi_p|\}$,
 and $\psi_p$ is any number
satisfying
$2p \pi/k \le \psi_p< 2(p+1)\pi/k$.
In particular $\mu=0.5\pi/k$, $\nu=3\pi/k$, and $\tilde \theta= 1/3$
 provided $c_p$ is the midpoint of the arc $\mathbb A_p$ of the unit circle $\{z:~|z|=1\}$
with the endpoints $\omega_n^{ph}$ and $\omega_n^{(p+1)h}$, that is
 provided
$\psi_p=(2p+1)\pi/k$, whereas 
$\mu\le 0.75 \pi/k$, $\nu\ge 2.5\pi/k$, and $\tilde \theta= 3/5$
 provided $c_p$ is a point on the arc $\mathbb A'_p$ with the end points
$\omega_{2n}^{(2p+0.5)h}$ and $\omega_{2n}^{(2p+1.5)h}$, that is  provided 
$(2p+0.5)\pi/k\le\psi_p\le(2p+1.5)\pi/k$.
\end{theorem}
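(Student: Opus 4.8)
The plan is to establish the claim in three stages: first fix the combinatorial setup (the permutation $P$ and the block partition), then invoke Lemma~\ref{learc} to compute the relevant distances on the unit circle, and finally assemble the separation estimate $\theta = 2\sin(\mu)/\sin(\nu)$ together with the two special-case numerics. First I would define $P$ so that $PC = (C_0,\dots,C_{k-1})$ where $C_p$ collects exactly the $h$ columns of $C$ whose knots $t_j = e\omega_n^j$ have polar angle in the arc $\mathbb A_p$ subtended by $\omega_n^{ph}$ and $\omega_n^{(p+1)h}$; since $kh = n$, each such arc contains precisely $h$ of the $n$th roots of unity (up to the wrap-around convention that glues the first and last rows, which is exactly the boundary bookkeeping the statement warns about). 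Within each basic block column $C_p$ I would then split the rows: the rows $i$ with $s_i$ lying in (or dangerously near) the sector $\Gamma$ spanned by $\mathbb A_p$ — more precisely, in the double-width sector $\mathbb A_{p-1}\cup\mathbb A_p\cup\mathbb A_{p+1}$ — form the extended diagonal block $\widehat\Sigma_p$, and the remaining rows form the admissible block $\widehat N_p$. The count $3hn$ for the $\widehat\Sigma_p$ is immediate: each extended diagonal block is constrained to have at most $3h$ rows in the "local" sector picture, hence at most $3h \cdot h = 3hn/k$ entries, times $k$ blocks; I would make the row-selection rule precise enough that this bound is transparent.

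Next I would turn to the separation estimate for an admissible block $N_p = (1/(s_i-t_j))_{i\in\widehat{\mathbb S}_q,\, j\in\mathbb T_q}$. Here $\mathbb T_q\subset\mathbb A_p$ consists of points $e\omega_n^j$ on the arc, and by construction $\widehat{\mathbb S}_q$ avoids the neighboring sectors, so every $s_i\in\widehat{\mathbb S}_q$ lies at angular distance at least roughly $2\pi/k$ from the arc $\mathbb A_p$. Picking the center $c_p = \exp(\psi_p\sqrt{-1})$ with $\psi_p$ as in the statement, I would bound $|t_j - c_p|$ from above using Lemma~\ref{learc}(i): since $t_j$ and $c_p$ both lie on the unit circle and their angular gap is at most the half-width of the (possibly slightly off-center) arc, $|t_j - c_p| = 2\sin(\text{half-gap}/2) \le 2\sin(\mu)$ with $2\mu = \max\{|c_p - \omega_n^{ph}|,\ |c_p - \omega_n^{(p+1)h}|\}$ interpreted as the arc-angle. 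For the denominator I would bound $|s_i - c_p|$ from below: $s_i$ lies outside the double-width sector, so by Lemma~\ref{learc}(ii) (applied to the two bounding rays of that sector through $c_p$, which is how $\nu = \min\{|2(p-1)\pi k - \psi_p|,\ |2(p+2)\pi k - \psi_p|\}$ arises) the distance from $s_i$ to $c_p$ along the circle — and hence $|s_i - c_p|$ itself — is at least $\sin(\nu)$ when $|s_i|\ge 1$, and if $|s_i|<1$ one still gets the same lower bound from the geometry since the nearest point of the sector is on the unit circle. Then $|(t_j-c_p)/(s_i-c_p)| \le 2\sin(\mu)/\sin(\nu) = \theta$, giving $(\theta,c_p)$-separation, with the small-angle approximation $\theta \approx \tilde\theta = 2\mu/\nu$.

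The two concrete instantiations then follow by plugging in the angles. If $c_p$ is the arc midpoint, $\psi_p = (2p+1)\pi/k$, the half-width is $\pi/k$ so $\mu = 0.5\pi/k$, and the nearest excluded ray is at angular distance $3\pi/k$ (one full arc of width $2\pi/k$ plus the half-arc $\pi/k$ on each side, symmetrically), giving $\nu = 3\pi/k$ and $\tilde\theta = 1/3$. If instead $c_p$ is allowed to drift onto the slightly larger arc $\mathbb A'_p$ of angular half-width $1.5\pi/(k)$ — wait, more carefully: $\psi_p$ ranges over an interval of width $\pi/k$ centered at the midpoint, so the worst-case half-width to the arc endpoints grows to $1.5 \cdot (\pi/k)\cdot(1/2)$... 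I would recompute this cleanly, but the stated outcome is $\mu \le 0.75\pi/k$, $\nu \ge 2.5\pi/k$ (the excluded sector is now closer on the side toward which $c_p$ drifted), hence $\tilde\theta = 3/5$.

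The main obstacle I anticipate is not any single estimate but the boundary bookkeeping: getting the wrap-around convention (the "glued" first and last rows) exactly right so that the arcs $\mathbb A_p$ tile the circle with no index off by one, and simultaneously ensuring the row-partition rule makes $\widehat\Sigma_p$ have at most $3h$ rows while guaranteeing $\widehat N_p$'s row set is genuinely separated from $\mathbb T_q$. There is real tension between these: widening the "local" sector that defines $\widehat\Sigma_p$ improves separation (larger $\nu$) but inflates the diagonal-block entry count beyond $3hn$, while narrowing it risks admissible blocks whose rows sit too close to the arc. I would resolve this by committing to the "three adjacent arcs" rule for $\widehat\Sigma_p$ from the outset and then checking that it simultaneously yields the $3hn$ bound and the $\nu \gtrsim 2.5\pi/k$ separation; the trigonometric inequalities $\sin(\mu) \le \mu$ and $\sin(\nu) \ge (2/\pi)\nu$ on the relevant ranges then convert the exact $\theta$ into the stated $\tilde\theta$, and I would remark that for large $k$ (the "$k$ not small" hypothesis) the approximation $\theta \approx \tilde\theta$ is as tight as claimed.
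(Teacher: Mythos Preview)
Your overall architecture matches the paper's: partition the unit circle into $k$ equal arcs, associate to each block column $C_p$ the arc $\mathbb A_p$, and take the rows whose knots $s_i$ fall in the three adjacent sectors as the extended diagonal block, with Lemma~\ref{learc} supplying the separation estimate. Two points need correction.

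First, a minor confusion about $P$. The block-column decomposition $(C_0,\dots,C_{k-1})$ requires no permutation at all, since the columns are already indexed by $t_j=e\omega_n^j$ and each arc $\mathbb A_p$ picks out exactly the consecutive indices $ph,\dots,(p+1)h-1$. The role of the \emph{row} permutation $P$ is to sort the $s_i$ by polar angle, so that the rows with $s_i$ in adjacent sectors become contiguous and the extended diagonal block $\widehat\Sigma_p$ is an honest submatrix (up to the wrap-around). The paper reduces to $e=1$ first, writes $s_i=r_i\exp(2\pi\phi_i\sqrt{-1})$, and lets $P$ re-enumerate so the $\phi_i$ are nondecreasing.

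Second, and this is the genuine gap: your count for the $3hn$ total is wrong. You assert that ``each extended diagonal block is constrained to have at most $3h$ rows,'' but the $s_i$ are arbitrary complex numbers---nothing prevents all $n$ of them from landing in a single sector, in which case one $\widehat\Sigma_p$ has $n$ rows, not $3h$. The theorem only claims the \emph{aggregate} entry count is $3hn$, and the correct argument is the one in the paper: if $n_p=|\mathbb S_p|$ is the number of $s_i$ in sector $p$, then $\widehat n_p=n_{p-1\bmod k}+n_p+n_{p+1\bmod k}$, and summing over $p$ each $n_q$ is counted exactly three times, so $\sum_p\widehat n_p=3\sum_p n_p=3n$; multiplying by the fixed column count $h$ gives $3hn$. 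Your per-block bound is false and cannot be repaired; replace it with this summation argument. (Relatedly, your lower bound on $|s_i-c_p|$ is right in spirit but the case split on $|s_i|$ is unnecessary: $s_i$ lies in the sector complementary to the three-arc neighborhood, and Lemma~\ref{learc}(ii) gives directly that the distance from the unit-circle point $c_p$ to that sector---hence to any point in it, whatever its modulus---is $\sin(\nu)$.)
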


\begin{proof} 
With no loss of generality assume that $e=1$ because the claimed properties are
readily extended from the matrix $C_{{\bf s},1}$ to the matrix 
$\frac{1}{e}C_{{\bf s},1}=C_{e{\bf s},e}$.
Represent the knots $s_0,\dots,s_{n-1}$ of the set $\mathbb S$ in polar coordinates,
$s_i=r_i\exp(2\pi\phi_i\sqrt {-1})$ where 
$r_i\ge 0$, $0\le\phi_i<2\pi$, $\phi_i=0$ if $r_i=0$,
and $i=0,1,\dots,n-1$. Re-enumerate all values $\phi_i$
to have them in nondecreasing order and
 to have $\phi_0^{(\rm new)}=\min_{i=0}^n\phi_i$
and let $P$ denote the permutation matrix that defines this re-enumeration.
To simplify our notation assume 
 that already 
the original enumeration has these  properties, that is $P=I$.
Let $\mathbb S_p$ and $\mathbb T_p$ 
 denote the  two subsets of the sets $\mathbb S$ and $\mathbb T$, respectively,
that lie in the semi-open sector  of the complex plane
$\Gamma_p=\{z=r\exp(\psi\sqrt{-1}):~r\ge 0, ~
2\pi p/k\le \psi<2\pi (p+1)/k \}$, 
bounded by the pair of the rays 
 from
 the origin to
the points $\omega_n^{ph}$ 
and $\omega_n^{(p+1)h}$.
Define the block partition 
$C=(C_{p,q})_{p,q=0}^{k-1}$ with the blocks $C_{p,q}=(\frac{1}{s_i-t_j})_{i\in \mathbb S_p,j\in \mathbb T_q}$
for $p,q=0,\dots,k-1$. Then declare the first block row
of the matrix $(C_{p,q})_{p,q=0}^{k-1}$ adjacent to its last row,
that is declare the blocks $C_{0,q}$ and  $C_{k-1,q}$ pairwise adjacent for every $q$
(as if the two rows were glued together),
and 
 partition every block column $C_{.,q}=(C_{p,q})_{q=0}^{k-1}$ into
the extended 
diagonal block $\widehat\Sigma_q=
(\frac{1}{s_i-t_j})_{i\in{\mathbb S}'_{q},j\in \mathbb T_q}$
and the admissible block 
$\widehat N_{q}=
(\frac{1}{s_i-t_j})_{i\in \widehat{\mathbb S}_q,j\in \mathbb T_q}$, where
$\widehat{\mathbb S}_q=\mathbb S-{\mathbb S}'_{q}$, 
${\mathbb S}'_{q}=\mathbb S_{q-1\mod k}\cup\mathbb S_q\cup\mathbb S_{q+1\mod k}$,
 $q=0,\dots,k-1$.
Clearly the $k$ diagonal blocks $\Sigma_p=C_{p,p}$ of sizes
$n_p\times h$ for $p=0,\dots,k-1$
have $h\sum_{p=0}^{k-1}n_p=hn$ entries overall, and this overall number is tripled in the
extension to the blocks $\widehat \Sigma_p$ because $\widehat n_p=n_{p-1\mod k}+n_q+n_{p+1\mod k}$,
and so $h\sum_{p=0}^{k-1}\widehat n_p=3h\sum_{p=0}^{k-1}n_p=3hn$.
Furthermore Lemma \ref{learc} implies that the sets $\widehat{\mathbb S}_q$ and
$\mathbb T_q$ defining the admissible block
$\widehat N_{q}$
are $(\theta,c_q)$-separated from one another
for  $\theta$ and $c_p$ defined in the theorem.
 \end{proof}

\begin{remark}\label{reglu}
Every block $\widehat \Sigma_p$ and  $\widehat N_p$ is made up 
of a pair 
blocks of the matrix $CP$ that 
are either adjacent to one another 
 or become  adjacent
if we declare that the first
row of the matrix is  adjacent
to its last row.
\end{remark}

\begin{remark}\label{regdn}
Even if $k$
does not divide $n$ we can still partition the unit circle
by $k$ equally spaced points, then
partition the complex 
plane into $k$ sectors 
 accordingly, represent the matrix $C$ as a $k\times k$
block matrix $(C_{p,q})_{p,q=0}^{k-1}$,  and define the basic block columns and
the
diagonal,  extended diagonal, and admissible blocks.
The only  change in the claims and proofs is that we would allow
 the number of columns of the latter blocks to vary slightly,
by at most 1, as $q$ varies. The techniques of
the adaptive FMM \cite{CGR98}
enable us to handle the case of any distribution of the knots $s_i$ on 
a circle as well as on a line or 
smooth curve of a bounded length. See Section \ref{svmv} on further extensions. 
\end{remark}

\begin{remark}\label{remerg} {\bf (Recursive merging.)}
Our analysis and  
results hold for any
positive integer $k$. We  
recursively apply them by
 partitioning the unit circle by arcs
whose lengths increase 
at every recursive step. More precisely, at every recursive step 
we merge a pair of the adjacent arcs of the current finer partition
of the circle
into a single arc of the new coarser  partition.
Then we redefine the diagonal,
extended diagonal and admissible blocks.
This {\em recursive merging}
 dramatically enhances the power of Theorem \ref{thlrba}
for supporting fast CV algorithms. 
\end{remark}

\begin{remark}\label{redelt} {\bf (Bounding the distance from the centers to the knot set} $\mathbb S${\bf .)}
There are exactly $n$ elements $s_0,\dots,s_{n-1}$ in the set 
$\mathbb S$. Therefore for every $p$
we can choose a center $c_p$ on the arc $\mathbb A'_p$
at the distance at least $2\sin(\pi/(8kn))$
from this set and thus obtain the bound 
\begin{equation}\label{eqdelt}
\delta\ge \delta_-=2\sin(\pi/(8kn)
\end{equation} 
for $\delta$
of (\ref{eqdlt}) (cf. part (i) of Lemma \ref{learc}),
where $\delta_-\approx \pi/(4kn)$ for large $n$.
Alternatively we can choose the centers $c_p$ at the midpoints of the arcs $\mathbb A_p$, 
and
 rotate both  arcs  $\mathbb A_p$  and
 centers $c_p$ for  $p=0,\dots,k-1$ by  
 a fixed angle on the  unit circle $\{z:~|z|=1\}$.
For  a proper choice of the angle,  part (i) of Lemma \ref{learc} 
ensures
that $\delta\ge 2\sin(\pi/(4kn))$ 
where $2\sin(\pi/(4kn) \approx \pi/(2kn)\ge 0.25\pi/n^2$ for large integers $n$.  
This would have decreased bound  (\ref{eqdelt}) by a factor of 2, but to
support the application of  Theorem \ref{thlrba}
throughout the merging process, we use about $2n$ centers $c_p$
overall.
This makes the same impact on the value 
$\delta_-$ as halving the length of the arcs $\mathbb A_p$
and brings us back to  bound (\ref{eqdelt}).
\end{remark}

Combine Theorem \ref{thss1} with bound (\ref{eqdelt})
and obtain the following result.

\begin{corollary}\label{colrbap}
At the $k$th stage of recursive merging, for $1<k<n$, every admissible block $\widehat N_{q}$
of the matrix 
$PC$ of Theorem \ref{thlrba}
can be $\epsilon$-approximated by a matrix  of rank $\rho$
provided that 
$\epsilon=\frac{4\theta^{\rho}}{(1-\theta)\delta\pi}$
for $\delta$ of Theorem \ref{thss1}.
For a constant $\theta$, $0<\theta<1$ 
this holds where $\rho=O(\log (\frac{1}{\delta\epsilon}))$
and consequently, by virtue of (\ref{eqdelt}), where
\begin{equation}\label{eqlrbap}
\rho=O(\log (n/\epsilon)).
\end{equation}
\end{corollary}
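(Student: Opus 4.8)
The plan is to chain together Theorem~\ref{thss1} and the geometric bound~(\ref{eqdelt}) from Remark~\ref{redelt}, and then simply count. First I would recall what Theorem~\ref{thlrba} has already delivered: at the $k$th merging stage, every admissible block $\widehat N_q$ comes from a pair of knot sets $\widehat{\mathbb S}_q$ and $\mathbb T_q$ that are $(\theta,c_q)$-separated from one another for a constant $\theta$ (say $\theta = 1/3$ or $\theta = 3/5$, depending on how the centers $c_p$ are placed on the arcs $\mathbb A_p$ or $\mathbb A'_p$), and the centers $c_p$ can be chosen to lie at distance at least $\delta_- = 2\sin(\pi/(8kn))$ from the full knot set $\mathbb S$, so in particular $\delta = \delta_{c_p,\widehat{\mathbb S}_q} \ge \delta_-$. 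Applying Theorem~\ref{thss1} to the submatrix $\widehat N_q$ with its local center $c_p$ gives, for every positive integer $\rho$, a rank-$\rho$ approximation $\widehat N_q = F_\rho G_\rho^T + E$ (the theorem's $k$ renamed $\rho-1$ to make the rank exactly $\rho$) with the entrywise bound $|E| \le \theta^{\rho-1}/((1-\theta)\delta)$. Converting the entrywise bound to the claimed bound on $\epsilon$ is routine: $\widehat N_q$ is at most $n\times n$, so $\|E\| \le n|E|$, but the corollary is stated in terms of the entrywise quantity $\epsilon = 4\theta^\rho/((1-\theta)\delta\pi)$, which already absorbs the $\delta \ge \delta_- \ge$ (something like) $\pi/(4kn)$ estimate; I would just substitute $\delta_-\approx \pi/(4kn)$ and note the constant $4/\pi$ comes out of $1/\delta_- \le 4kn/\pi$ up to the $\sin$ approximation, with $k<n$ giving $1/\delta_- = O(n^2)$ hidden in the next step.

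Next I would solve the bound $\epsilon = 4\theta^\rho/((1-\theta)\delta\pi)$ for $\rho$. Since $0<\theta<1$ is a fixed constant, $1/(1-\theta)$ and $4/\pi$ are absorbed into $O(\cdot)$ constants, and taking logarithms gives $\rho = O\big(\log(1/(\delta\epsilon))\big)$, which is the first displayed claim. Then I would insert the lower bound~(\ref{eqdelt}): $\delta \ge \delta_- = 2\sin(\pi/(8kn)) \approx \pi/(4kn) = \Theta(1/(kn))$, and since $1 < k < n$ this is $\Theta(1/(kn)) = \Omega(1/n^2)$, so $\log(1/(\delta\epsilon)) = O(\log(n^2/\epsilon)) = O(\log(n/\epsilon))$, absorbing the factor $2$ in the exponent into the $O$-constant. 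That yields~(\ref{eqlrbap}), $\rho = O(\log(n/\epsilon))$, and finishes the proof.

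The only genuinely delicate point — and the step I would flag as the main obstacle — is making sure the hypotheses of Theorem~\ref{thss1} actually hold for \emph{every} admissible block \emph{simultaneously} with a \emph{single} constant $\theta$ and a uniform lower bound on $\delta$, throughout all $\log n$ merging stages (Remark~\ref{remerg}), not just at one fixed $k$. Here I would lean on Remark~\ref{redelt}: the issue is that as $k$ shrinks during merging, the arcs lengthen and $\theta$ could in principle drift, and the number of distinct centers $c_p$ accumulates across stages; the remark already handles this by observing that using about $2n$ centers overall keeps $\delta \ge \delta_-$ with $\delta_- = 2\sin(\pi/(8kn))$ uniformly, while Lemma~\ref{learc} together with the placement of $c_p$ on $\mathbb A_p$ or $\mathbb A'_p$ keeps $\theta$ bounded away from $1$ by an absolute constant ($\theta \le 3/5 < 1$) independently of $k$. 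So the substantive content of the proof is really just invoking those two facts correctly and verifying that the $k<n$ crudeness in $1/\delta_-$ is harmless because it only changes $\log(n/\epsilon)$ by a constant factor. Everything after that is the two-line logarithm computation above.
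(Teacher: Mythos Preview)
Your proposal is correct and follows essentially the same route as the paper, which proves the corollary in a single sentence: ``Combine Theorem~\ref{thss1} with bound~(\ref{eqdelt}).'' You have simply unpacked that sentence, applying Theorem~\ref{thss1} to the $(\theta,c_q)$-separated knot sets supplied by Theorem~\ref{thlrba}, solving the resulting error bound for $\rho$, and then substituting the lower bound $\delta\ge\delta_-=\Omega(1/n^2)$ from~(\ref{eqdelt}) to pass from $\rho=O(\log(1/(\delta\epsilon)))$ to $\rho=O(\log(n/\epsilon))$; your additional discussion of uniformity across merging stages (via Remark~\ref{redelt}) is a helpful gloss but not a departure from the paper's argument.
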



\begin{remark}\label{redual}
Equation  (\ref{eqctr}) implies that
Theorem \ref{thlrba} and consequently Corollary \ref{colrbap} 
and Theorem \ref{thcv} of the next section
can be immediately extended 
to the case where  $C=(\frac{1}{s_i-t_j})_{i,j=0}^{n-1}$, 
$s_i=e\omega^{i}$ for all $i$, and 
the choice of the knots  $t_j$ 
is unrestricted. One can apply the
 FMM techniques  of  \cite{GR87} and  \cite{CGR98}
toward relaxing our restrictions on the 
knot sets $\mathbb S$ or  $\mathbb T$ of a Cauchy matrix.
They would guarantee 
 numerical stability, 
unlike
our alternative techniques
in Section \ref{sgsk}.
\end{remark}

\begin{remark}\label{redlt}  The
lower bound $\delta_-\approx \pi/(2km)$ on $\delta$ of Remark \ref{redelt}
is overly pessimistic
for many dispositions of the knots $s_i$ on the complex plane.
For example, $\delta_-$ is a constant where the value $|s_i|$ is close to 1
 for no $i$,
whereas  $\delta_-\ge\pi/m$ where $s_i=\omega_{m}^i$, $i=0,\dots,m-1$.
Furthermore typically at most a small fraction 
 of all differences $c_{j,q}-s_i$ 
 has absolute values
close to the  bound $\delta_-$. 
For constant $\delta_-$  bound 
(\ref{eqlrbap})
on the $\epsilon$-rank 
decreases to
$\rho=O(\log (1/\epsilon))$, which is the case in
 \cite[Section 3]{DGR96}, where all knots $s_i$ and $t_j$ are real.
\end{remark}


\section{Fast approximate computations with  structured
 matrices and extensions}\label{svmv}


\subsection{Definitions and auxiliary results}\label{sdaux}


We need some additional definitions and basic results.
$\alpha (M)$ and $\beta(M)$ denote the
arithmetic cost of 
 computing 
the vectors
 $M{\bf u}$  
and  
$M^{-1}{\bf u}$, respectively,
maximized over all unit vectors ${\bf u}$
and minimized over all algorithms, and
we write $\alpha_{\epsilon}(M)=\min_{|E|\le\epsilon} \alpha (M+E)$ 
and $\beta_{\epsilon}(M)=\min_{|E|\le\epsilon} \beta (M+E)$
for a fixed small positive $\epsilon$.
The straightforward algorithm supports the following bound.

\begin{theorem}\label{thmbv}
$\alpha (M)\le 2(m+n)\rho-\rho-m$ for an $m\times n$ matrix $M$ given with its generating pair 
of a length $\rho$.
\end{theorem}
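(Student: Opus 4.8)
The plan is to use the generating pair directly and never form the $n^2$ (here $mn$) entries of $M$ explicitly. Write $M=FG^T$ with $F$ of size $m\times\rho$ and $G$ of size $n\times\rho$, as in the definition of a generator of length $\rho$ in Section \ref{sgen}. For a vector $\mathbf u$ of dimension $n$, associate the product as $M\mathbf u=F(G^T\mathbf u)$, i.e. first compute $\mathbf w=G^T\mathbf u$ and then $F\mathbf w$.

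The key step is a careful flop count of these two stages, recalling that an inner product of two vectors of dimension $k$ costs $k$ multiplications and $k-1$ additions, hence $2k-1$ flops. First, $G^T$ has size $\rho\times n$, so forming $\mathbf w=G^T\mathbf u$ amounts to $\rho$ inner products of dimension $n$, for a total of $\rho(2n-1)=2n\rho-\rho$ flops. Second, $F$ has size $m\times\rho$, so forming $F\mathbf w$ amounts to $m$ inner products of dimension $\rho$, for a total of $m(2\rho-1)=2m\rho-m$ flops. Adding the two contributions gives $2n\rho-\rho+2m\rho-m=2(m+n)\rho-\rho-m$, which is the asserted bound on $\alpha(M)$ (an upper bound, since it is achieved by this particular algorithm and $\alpha$ is the minimum over all algorithms).

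There is essentially no obstacle here beyond bookkeeping; the only point to be slightly careful about is the parenthesization, since the alternative association $(FG^T)\mathbf u$ would first build the $m\times n$ matrix $M$ at a cost of order $mn\rho$ flops and would not yield the claimed linear-in-$(m+n)$ bound. One should also note that the count above assumes no entries of $F$ or $G$ are known in advance to be $0$ or $1$; exploiting such structure can only decrease the cost, so the stated inequality still holds. This also recovers the familiar special case: for $m=n$ one gets $\alpha(M)\le 4n\rho-\rho-n$, consistent with the generator-based multiplication bounds quoted in Theorems \ref{thcvdft} and \ref{thmbv1}.
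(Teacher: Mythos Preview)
Your proof is correct and is precisely the ``straightforward algorithm'' the paper alludes to in the one-line justification preceding the theorem; the paper gives no further detail, and your explicit flop count for $F(G^T\mathbf u)$ is exactly what is intended.
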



\begin{theorem}\label{thpert} (See \cite[Corollary 1.4.19]{S98} for $P= -M^{-1}E$.)
Suppose $M$ and $M+E$ are two nonsingular matrices of the same size
and $||M^{-1}E||=\theta<1$. Then
$||I-(M+E)^{-1}M||  \le \frac{\theta}{1-\theta}$ and
$||(M+E)^{-1}-M^{-1}||\le \frac{\theta}{1-\theta}||M^{-1}||$.
In particular $||(M+E)^{-1}-M^{-1}||\le 1.5\theta ||M^{-1}||$
if $\theta\le 1/3$.
\end{theorem}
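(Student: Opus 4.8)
The plan is to reduce everything to a Neumann series estimate. First I would factor $M+E = M(I+M^{-1}E)$ and note that, since $||M^{-1}E|| = \theta < 1$, the matrix $I + M^{-1}E$ is invertible with $(I+M^{-1}E)^{-1} = \sum_{k\ge 0}(-M^{-1}E)^k$, whence $||(I+M^{-1}E)^{-1}|| \le \sum_{k\ge 0}\theta^k = \frac{1}{1-\theta}$. Consequently $(M+E)^{-1} = (I+M^{-1}E)^{-1}M^{-1}$, which is the identity that drives both bounds.

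For the first inequality I would compute $I - (M+E)^{-1}M = I - (I+M^{-1}E)^{-1} = (I+M^{-1}E)^{-1}\bigl((I+M^{-1}E) - I\bigr) = (I+M^{-1}E)^{-1}M^{-1}E$, so that taking spectral norms and using submultiplicativity gives $||I-(M+E)^{-1}M|| \le \frac{1}{1-\theta}\,||M^{-1}E|| = \frac{\theta}{1-\theta}$. For the second inequality I would write $(M+E)^{-1} - M^{-1} = \bigl((M+E)^{-1}M - I\bigr)M^{-1} = -\bigl(I - (M+E)^{-1}M\bigr)M^{-1}$ and invoke the bound just proved, obtaining $||(M+E)^{-1}-M^{-1}|| \le \frac{\theta}{1-\theta}\,||M^{-1}||$.

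Finally, since $\theta \mapsto \frac{1}{1-\theta}$ is increasing on $[0,1)$, the hypothesis $\theta \le 1/3$ yields $\frac{\theta}{1-\theta} \le \frac{3}{2}\theta = 1.5\theta$, which gives the last claim. I do not expect any genuine obstacle here: the argument is the standard first-order perturbation estimate, and the only points requiring care are keeping the order of the matrix factors correct and the sign in the identity for $(M+E)^{-1}-M^{-1}$. Alternatively, one may simply quote \cite[Corollary 1.4.19]{S98} with $P = -M^{-1}E$ and then derive the explicit constant $1.5$ from $\theta \le 1/3$ as above.
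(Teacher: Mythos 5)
Your proof is correct: all the identities ($(M+E)^{-1}=(I+M^{-1}E)^{-1}M^{-1}$, $I-(M+E)^{-1}M=(I+M^{-1}E)^{-1}M^{-1}E$, and $(M+E)^{-1}-M^{-1}=-\bigl(I-(M+E)^{-1}M\bigr)M^{-1}$) check out, and the Neumann-series bound together with $\theta\le 1/3$ gives the constant $1.5$ as claimed. The paper itself offers no proof beyond citing \cite[Corollary 1.4.19]{S98} with $P=-M^{-1}E$, and your argument is exactly the standard perturbation estimate underlying that citation, so there is nothing to add.
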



\subsection{Fast approximate computations with
CV matrices}\label{scvc}


Unlike the case of HSS matrices Corollary \ref{colrbap}  bounds
numerical rank only for  off-diagonal blocks defined
column-wise, but not row-wise. We can still devise fast 
algorithms for such generalized HSS matrices 
approximating a CV matrix $C$
because these column-wise bounds
hold throughout
the process of recursive merging.  
Our bounds on the cost of approximate solution of 
linear systems of equations actually
require that the associated merging processes
of this and the previous sections
for an HSS approximation involve no singular 
or ill conditioned auxiliary matrices \cite{Pa}, and in particular
the input matrix should be nonsingular and well conditioned. In view of Remark \ref{renmr}
this is a serious restriction, which 
is extended to our algorithms for solving
structured linear systems
of equations in the next subsections.


\begin{theorem}\label{thcv} 
Assume an $n\times n$ CV matrix $C$, a positive $\epsilon<n$ ,
and $\rho$ of (\ref{eqlrbap}). Then  
 $\alpha_{\epsilon}(C)=O(n\rho\log (n)))$
and 
$\beta_{\epsilon}(C)=O(n\rho ^2\log (n))$, and so
$\alpha_{\epsilon}(C)=O(n\log^2(n)))$ 
and $\beta_{\epsilon}(C)=O(n\log^3 (n))$ where $\rho= O(\log (n))$, whereas
$\alpha_{\epsilon}(C)=O(n\log(1/\epsilon)\log (n)))$
and 
$\beta_{\epsilon}(C)=O(n\log^2(1/\epsilon)\log (n))$
where $\rho= O(\log (1/\epsilon))$. 
\end{theorem}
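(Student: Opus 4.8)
The plan is to build an explicit $\epsilon$-approximation $\widehat C$ of $C$ that is a (slightly generalized) HSS matrix of HSS rank $O(\rho)$, and then to bound $\alpha(\widehat C)$ and $\beta(\widehat C)$ by invoking, respectively, the FMM/HSS matrix-by-vector algorithm and the HSS linear solver, tracking the flop counts through the $O(\log n)$ stages of recursive merging.

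First I would fix a permutation matrix $P$ as in Theorem \ref{thlrba} and apply that theorem together with the recursive merging of Remark \ref{remerg}. At each of the $s=O(\log n)$ stages, with partition parameter $k_\ell$ roughly $n/2^\ell$, Theorem \ref{thlrba} partitions the off-block-diagonal part of $PC$ into $O(k_\ell)$ admissible blocks $\widehat N_q$, each $(\theta,c_p)$-separated with, say, $\theta\le 3/5$ (the choice of centers on the arcs $\mathbb A'_p$); by Corollary \ref{colrbap} every such block is $\epsilon'$-approximable by a matrix of rank $\rho=O(\log(1/(\delta\epsilon')))$, where $\delta\ge\delta_-$ by (\ref{eqdelt}). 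Since every entry of $C$ lies in exactly one admissible block per stage, choosing $\epsilon'=\epsilon/O(s)$ makes the superposition of all these blockwise perturbations a single matrix $E$ with $|E|\le\epsilon$; here $\rho=O(\log(n/\epsilon'))=O(\log(n/\epsilon))$, matching (\ref{eqlrbap}). Adjoining the $O(n\rho)$-entry extended diagonal blocks of the finest stage, this yields $\widehat C=C+E$, represented as a sum of a block-diagonal matrix and $s$ matrices whose nonzero blocks are given in factored form of rank at most $\rho$.

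Next, for $\alpha_\epsilon(C)\le\alpha(\widehat C)$: the product $\widehat C\mathbf{u}$ is computed stage by stage. At each stage the admissible blocks have column index sets partitioning $\{0,\dots,n-1\}$ and extended row counts summing to $O(n)$, so, using the factored form, that stage costs $O(n\rho)$ flops (the standard FMM bookkeeping, cf. \cite{CGR98}, \cite{DGR96}, \cite{CGS07}); adding the $O(n\rho)$ flops for the finest-stage diagonal blocks and summing over the $s=O(\log n)$ stages gives $\alpha(\widehat C)=O(n\rho\log n)$. For $\beta_\epsilon(C)\le\beta(\widehat C)$ I would feed the same generalized HSS representation to the HSS solver of \cite{MRT05}, \cite{CGS07}, \cite{XXG12}, \cite{XXCB} in the column-wise variant analyzed in \cite{Pa}: it computes a generator of $\widehat C^{-1}$ in $O(n\rho^2\log n)$ flops, after which each solve costs $O(n\rho\log n)$, provided none of the auxiliary matrices arising during merging is singular or ill conditioned — an assumption we make and which is reasonable when $C$ is nonsingular and well conditioned (cf. the discussion preceding the theorem, and Theorem \ref{thpert} for the companion bound comparing $\widehat C^{-1}\mathbf{u}$ with $C^{-1}\mathbf{u}$). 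Hence $\beta(\widehat C)=O(n\rho^2\log n)$.

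Finally, substituting $\rho=O(\log n)$ from (\ref{eqlrbap}) yields $\alpha_\epsilon(C)=O(n\log^2 n)$ and $\beta_\epsilon(C)=O(n\log^3 n)$, while in the situation of Remark \ref{redlt}, where $\delta$ is bounded below by a constant, $\rho=O(\log(1/\epsilon))$ and we obtain the remaining two bounds. I expect the main obstacle to be the rigorous handling of error accumulation together with the conditioning requirement: one must confirm that pushing $O(\log n)$ blockwise $\epsilon'$-perturbations through the approximation keeps $|E|\le\epsilon$ (which works only because the logarithm absorbs the $1/\delta_-=O(n^2)$ factor and the $O(\log n)$ factor), and — the genuinely delicate point — that the one-sided (column-wise only) low-rank structure still admits a stable recursive solver, for which I would lean on \cite{Pa}.
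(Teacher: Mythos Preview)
Your proposal is correct and takes essentially the same route as the paper: build a generalized-HSS $\epsilon$-approximation via Theorem~\ref{thlrba}, Corollary~\ref{colrbap}, and recursive merging, then sum $O(n\rho)$ work per stage over $O(\log n)$ stages for $\alpha_\epsilon$, and defer to the HSS solvers of \cite{CGS07}, \cite{XXG12} (via \cite{Pa} for the one-sided, column-wise structure) for $\beta_\epsilon$. One small overcaution: in the hierarchical decomposition each off-diagonal entry lies in exactly one admissible block across \emph{all} stages combined, not one per stage, so your reduction $\epsilon'=\epsilon/O(s)$ is unnecessary---though harmless, since the extra $\log n$ factor is absorbed into $\rho$.
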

\begin{proof} 
Apply recursive merging  to the matrix $C$.
At its $j$th stage, $j=0,\dots,l-1$, for
$l\le \lceil\log_2n\rceil$,
compute
a permutation matrix $P^{(j)}$
and an $\epsilon$-approximation $C_{\epsilon}^{(j)}$ of the matrix $P^{(j)}C$ 
where all admissible blocks have ranks
at most $\rho$ 
for $\rho$ and $\epsilon$  invariant
 at all stages of the merging process and 
satisfying equation (\ref{eqlrbap}) (cf.  Corollary \ref{colrbap}). Clearly
$\alpha_{\epsilon}(C)\le 
\alpha(C_{\epsilon}^{(j)})$ for all $j$.
Let  $C'=C_{\epsilon}^{(l-1)}$ denote the matrix entering
the last merging stage,
 $C_{\epsilon}'=\begin{pmatrix}
\widehat \Sigma'_0  & \widehat N_1  \\
\widehat \Sigma''_0  &  \widehat \Sigma'_1  \\
\widehat N_0     &   \widehat \Sigma''_1
\end{pmatrix}$
where  
$\widehat \Sigma_p=\begin{pmatrix}
\widehat \Sigma'_p   \\
\widehat \Sigma''_p
\end{pmatrix}$  for $p=0,1$ denote
the two extended diagonal blocks and where $\rank(\widehat N_p)\le \rho$ 
for $p=0,1$.
It follows that $\alpha_{\epsilon}(C)\le 
\alpha(C'_{\epsilon})\le \sum_{p=0}^1(\alpha(\widehat \Sigma_p)
+\alpha(\widehat N_p))+n$.
Apply Theorem \ref{thmbv} and obtain that
$\sum_{p=0}^1
\alpha(\widehat N_p)\le 2n\rho$.
Recursively apply this argument to estimate 
$\alpha(\widehat \Sigma_p)$ for $p=0,1$ and obtain
$\alpha(C')\le n+4n\rho l + \sum_{j=0}^{l-1}\alpha(\widehat\Sigma_j)$
where $\widehat\Sigma_j$ denotes the matrix made up of the 
extended diagonal blocks at the $j$th merging, $j=0,\dots,l-1$ and
having at most $3nh$ entries for every $j$.
Choose $h=O(\rho)$  
and obtain that $\sum_{j=0}^{l-1}\alpha(\widehat\Sigma_j)=O(nl\rho)$
and consequently  $\alpha_{\epsilon}(C)\le 
\alpha(C')=O(n\rho \log (n))$. The claimed bound on $\beta_{\epsilon}(C)$
is supported by the algorithms of  \cite{CGS07} and \cite{XXG12}.
The algorithms have been proposed for HSS matrices, approximating
the special CV matrix $C_{1,\omega_{2n}}$, but close examination in \cite{Pa}
shows that they support the claimed cost bound for CV matrices.
\end{proof}



\subsection{Extension to
Van\-der\-monde matrices and their transposes}\label{svvtv}


\begin{theorem}\label{thcvv1}
Suppose  we are given a  vector
${\bf s}=(s_i)_{i=0}^{n-1}$ defining
an $n\times n$ Van\-der\-monde matrix
$V=V_{\bf s}$. 
Write $s_+=\max_{i=0}^{n-1}|s_i|$
 and 
$\bar\epsilon=(s_++1)\epsilon$ where
$\log(1/\epsilon)=O(\log n)$.
 Then 
$\alpha_{\bar\epsilon}(V)+\alpha_{\bar\epsilon}(V^T)=
O(n\rho\log (n))$
and
$\beta_{\bar\epsilon}(V)+\beta_{\bar\epsilon}(V^T)=O(n\rho^2\log (n))$
for  $\rho$ of (\ref{eqlrbap}).
\end{theorem}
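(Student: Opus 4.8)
The plan is to reduce every computation with $V_{\bf s}$, $V_{\bf s}^T$, $V_{\bf s}^{-1}$, and $V_{\bf s}^{-T}$ to one computation with a CV matrix, sandwiched between cheap and numerically stable factors, and then to invoke Theorem \ref{thcv}. Fix a scalar $f$ with $|f|=1$ (say $f=1$), so that the knots $t_j=f\omega_n^j$ lie on the unit circle and the DFT-based factors below are quasiunitary, and use the identities (\ref{eqvs}), (\ref{eqvst}), (\ref{eqvs-}), and (\ref{eqvs-t}). Each of these writes the matrix in question as a product of a scalar, one DFT matrix ($\Omega$ or $\Omega^H$), three diagonal matrices two of which are unitary ($\diag(f^{\pm j})$, $\diag(\omega_n^{\pm j})$) and one of which --- call it $D$ --- carries the entries $\pm(s_i^n-f^n)$ in the cases of $V_{\bf s}$ and $V_{\bf s}^T$ and the entries $\pm f^{n-1}/(s_i^n-f^n)$ in the cases of $V_{\bf s}^{-1}$ and $V_{\bf s}^{-T}$, together with the matrix $K$ which is $C_{{\bf s},f}$, $C_{f,{\bf s}}$, or one of their inverses. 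Hence $V_{\bf s}{\bf u}$ and $V_{\bf s}^T{\bf u}$ are computed by three exact $O(n)$-flop diagonal scalings, one FFT ($O(n\log n)$ flops, numerically stable by Theorem \ref{thdft}), and one CV matrix-by-vector product; and $V_{\bf s}^{-1}{\bf u}$, $V_{\bf s}^{-T}{\bf u}$ the same way except that the CV step becomes a CV linear system solve. Summing the costs and substituting $\alpha_\epsilon(C)=O(n\rho\log n)$ and $\beta_\epsilon(C)=O(n\rho^2\log n)$ from Theorem \ref{thcv} gives the two claimed bounds.

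For the error analysis, the only inexact step is the CV computation, which by Theorem \ref{thcv} amounts to replacing $K$ (or $K^{-1}$) by a matrix within entrywise distance $\epsilon'$ of it, for a tolerance $\epsilon'$ we may prescribe. Propagating this perturbation through the remaining exact factors, the induced perturbation $E_V$ of $V_{\bf s}$ (or $V_{\bf s}^T$, etc.) satisfies $|E_V|=O(\|D\|\,\epsilon'\,\sqrt n)$, because $\|\Omega\|=\|\Omega^H\|=\sqrt n$, the scalar prefactor has modulus $O(1/n)$ or $O(n)$, and the two unitary diagonals do not amplify the norm. In the cases of $V_{\bf s}$ and $V_{\bf s}^T$ we have $\|D\|\le\max_i|s_i^n-f^n|\le s_+^n+1$, so prescribing $\epsilon'$ to force $|E_V|\le\bar\epsilon=(s_++1)\epsilon$ tightens the CV tolerance only by a $\mathrm{poly}(n)\,(s_+^n+1)$ factor; since $\log(1/\epsilon)=O(\log n)$, the resulting $\rho=O(\log(n/\epsilon'))$ stays of the order in (\ref{eqlrbap}) over the ranges of $s_+$ in which the estimate is of interest (for $s_+\le1$ this is immediate). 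In the inverse cases one first passes the CV perturbation to a perturbation of $K^{-1}$ by Theorem \ref{thpert}, which brings in the factor $\|K^{-1}\|$ and is legitimate precisely when $K$ is well conditioned, and then absorbs the extra nonunitary diagonal $D=\diag(f^{n-1}/(s_i^n-f^n))$; by Remark \ref{renmr} this is also exactly the regime in which $V_{\bf s}$ is well conditioned and the $\beta$-bounds are meaningful.

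The one step needing care, and the only one beyond routine estimation, is the bookkeeping of the nonunitary diagonal factor $D$: this scaling is what forces the error budget to carry the factor $s_++1$, and one must verify that absorbing it, together with the $\mathrm{poly}(n)$ norm factors from the DFT and (in the inverse cases) the conditioning factor from Theorem \ref{thpert}, into the CV tolerance $\epsilon'$ does not inflate $\rho$ beyond $O(\log(n/\epsilon))$. This is precisely why the hypothesis fixes $\log(1/\epsilon)=O(\log n)$ and rescales the tolerance by $s_++1$ rather than using a bare $\epsilon$.
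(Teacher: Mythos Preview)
Your approach is the same as the paper's: reduce to a CV matrix via (\ref{eqvs})--(\ref{eqvs-t}), observe that the DFT factor and the two unitary diagonals do not amplify errors, and track the one nonunitary diagonal. The cost bookkeeping and the $\alpha$-case error analysis are fine.

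The one place your argument has a genuine gap is the treatment of the diagonal $D=\diag\bigl(f^{n-1}/(s_i^n-f^n)\bigr)$ in the $\beta$ cases. You fix $f$ (``say $f=1$'') and then assert that $\|D\|$ is under control because, by Remark \ref{renmr}, we are in the regime where $V_{\bf s}$ is well conditioned. That inference is false: take $s_i=\omega_n^{i}$, so $V_{\bf s}=\Omega$ is perfectly conditioned, yet $s_i^n=1$ for every $i$, and with $f=1$ the factorizations (\ref{eqvs})--(\ref{eqvs-t}) degenerate and $\|D\|=\infty$. The conditioning of $V_{\bf s}$ and the size of $\min_i|s_i^n-f^n|$ are simply different quantities. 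The paper closes this gap by \emph{choosing} $f$ on the unit circle (rather than fixing it) so that $\min_i|s_i^n-f^n|\ge 1/(2n)$; this is always possible because as $f$ ranges over $\{|f|=1\}$ the value $f^n$ sweeps the whole circle, and the $n$ points $s_i^n$ cannot be $1/(2n)$-dense on it. With that choice $\|D\|\le 2n$, so the inverse diagonal contributes only an additive $O(\log n)$ to $\log(1/\epsilon')$, which is absorbed into $\rho=O(\log(n/\epsilon))$ of (\ref{eqlrbap}). Once you insert this choice of $f$, your argument matches the paper's.
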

\begin{proof}
Equations (\ref{eqvs})--(\ref{eqvs-t})
reduce the computations with the matrices $V_{\bf s}$
and  $V_{\bf s}^T$ to the same computations with a Cauchy matrix
$C_{{\bf s},f}$, which is a CV matrix for any $f$ such that $|f|=1$.
This enables us to extend Theorem \ref{thcv}
to the matrices $V_{\bf s}$
and  $V_{\bf s}^T$ except that we must adjust the approximation
bound $\epsilon$ of that theorem. Let us show that it  is sufficient
to change it to  $\bar\epsilon$. 
The matrices  $\diag(\omega^{-j})_{j=0}^{n-1}$,
$\diag(f^{-j})_{j=0}^{n-1}$, and
$\frac{1}{\sqrt n}\Omega=(\sqrt n\Omega^H)^{-1}$
and their inverses are unitary, and 
so multiplication by them makes 
no impact on the output error norms.
Multiplication by the matrix $\diag(s_i^n-f^n)_{i=0}^{n-1}$
can increase the value $\log_2(1/\epsilon)$ 
 by at most $\log_2(s_+^n+1)$,
whereas multiplication by its inverse 
can increase this value by at most $\log_2(\Delta)$ for
$\Delta=1/\max_{\{f:~|f|=1\}}\min_{i=0}^{n-1}|s_i^n-f^n|$.
We can ensure that $\Delta\le 2n$ 
by choosing a proper value $f$, and so
$\log_2(\Delta)\le 1+\log_2(n)$. Such an increase
 makes no impact on the asymptotic bounds of 
Theorem \ref{thcv}.
\end{proof}


\subsection{Extension to the classes of $\mathcal {CV}$,  $\mathcal {V}$ and $\mathcal {V^T}$}\label{sextcvvt}


Assume a Cauchy-like matrix $M$ 
of the class $\mathcal C$ 
represented with its displacement generator $(F,G)$
of a length $d$. 
Part (c) of
Theorem \ref{thdexpr} enables us to reduce  
 the approximation of $M$  to approximation of 
its basic  matrix $C$. We immediately obtain that

\begin{equation}\label{eqclk}
\alpha_{\epsilon'}(M)\le 2d+d\alpha_{\epsilon}(C)
~{\rm for}~ \epsilon'\le\sum_{j=1}^d |{\bf f}_j|~|{\bf g}_j|~\epsilon\le d~|F|~|G|~ \epsilon.
\end{equation}
Combine this estimate with Theorem \ref{thcv} provided that $M$ is a CV-like matrix
and obtain 

\begin{equation}\label{eqcvlk}
\alpha_{\epsilon'}(M)=O(dn\rho\log(n))
\end{equation}
for $\rho$,  $\epsilon'$ and $\epsilon'$ 
satisfying  (\ref{eqlrbap}) and (\ref{eqclk}).
To estimate $\beta_{\epsilon'}(M)$ note that
the ranks of the matrices
of local approximation
increase by at most a factor of $d$
in the transition to the matrix $M$
from its basic matrix $C$ in the expression of part (c) of
Theorem \ref{thdexpr}, whereas
the approximation norm bounds 
are defined according to the expressions for $\epsilon$ 
and $\epsilon'$ of (\ref{eqclk}). 
Theorem \ref{thpert} extends the latter error norm bounds 
to the case of the solution of linear systems with these matrices
as follows, 
\begin{equation}\label{eqepsl}
\epsilon''=\epsilon''(\epsilon',M)=O(n\epsilon'||M^{-1}||),
\end{equation}
and we 
obtain  the following expression,
\begin{equation}\label{eqcvlkb}
\beta_{\epsilon''}(M)=O(d^2n\rho^2\log(n))
\end{equation}
where
$\rho$,  $\epsilon'$, and $\epsilon''$ 
satisfy (\ref{eqlrbap}), (\ref{eqclk})  and  (\ref{eqepsl}).
Furthermore   
we 
 reduce the approximation of the matrices of the classes
$\mathcal V$ and $\mathcal {V^T}$ to the
approximation 
 of CV matrices
by applying at first parts $(v)$ and $(v^T)$ of
Theorem \ref{thdexpr} and then
 the algorithms supporting Theorem \ref{thcvv1}.


\subsection{Extension to the case of arbitrary knots} 
\label{sgsk}


Next we  
transform  matrix structure
 based on Theorem
 \ref{thdiag} to extend our 
approximation algorithms to computations
with Cauchy and Cauchy-like matrices 
of the class $\mathcal C$ with any set of knots, 
and we also estimate 
 the impact of the 
transformations on the approximation errors.

Suppose  that
 ${\bf s}$,
${\bf t}$
and  ${\bf u}$ 
denote three  vectors
of dimension $n$ and that
an $n\times n$ Cauchy-like matrix 
$M\in \mathcal C_{\bf s,t}$ is
given with a displacement generator $(F,G)$
of a length $d$.
Transform matrix
structures to reduce  
 the solution  
of a linear system $M{\bf x}={\bf u}$
to some
computations with CV-like matrices and 
multiplication  of the matrix $M$ by the 
vector ${\bf e}=(1,\dots,1)^T$.
Fix a scalar $e$, $|e|=1$, write
 $P=MC_{{\bf t},e}$ and ${\bf x}=C_{{\bf t},e}{\bf y}$, and note that
$P{\bf y}=${\bf u}, where
$P\in \mathcal C_{{\bf s},e}$ is a CV-like matrix
with a displacement generator $(F_P,G_P)$
of a length at most $d+1$,
$F_P=(F~|~M{\bf e})$
and $G_P=(C_{{\bf t},e}^TG~|~{\bf e})$.
By applying these techniques to the matrix $M^T\in \mathcal C_{\bf t,s}$
we can alternatively reduce the linear system 
$M{\bf x}={\bf u}$
to the computation of the products 
$M^T{\bf e}$ and to some
computations with CV-like matrices.
Likewise part (c) of Theorem \ref{thdexpr}
 reduces the
 approximation of  the vector
${\bf x}=M{\bf u}$ 
to the approximation of  the $d$ vectors
$C_{\bf s,t}{\bf v}_i$
for ${\bf v}_i=\diag({\bf g}_i)_{i=1}^d{\bf u}$,
${\bf g}_i=G{\bf e}_i$, and $i=1,\dots,d$,
and to $O(dn)$
additional flops,
provided that the matrix $M\in \mathcal C_{\bf s,t}$
is given with its displacement generator  $(F,G)$
of a length $d$.
We can compute a displacement generator
of a length at most 2 for the matrix 
$C'=C_{\bf s,t}C_{{\bf t},e}$,  of the 
class $\mathcal C_{{\bf s},e}$,
and then reduce the computation of 
the vector
${\bf x}$ to multiplication of the $\mathcal {CV}$
matrix $C'$
by the  
vector ${\bf z}$ satisfying the CV linear system
of equations $C_{{\bf t},e}{\bf z}={\bf u}$.
In all cases we reduce the original tasks to computations
with CV matrices
and  readily verify that
multiplication by the auxiliary CV matrix $C$
increases the  approximation 
error norm of the output
 by at most a factor of $||C||~||C^{-1}||$.
As we showed in Remark \ref{renmr} this upper bound is large
unless most of the knots of the CV matrix $C$ have absolute values near 1, 
 and so one may benefit from alternative
direct applications of the FMM to Cauchy matrices.


\subsection{Fast approximate computations with
polynomials and rational functions}\label{sprf}


Together with our equations (\ref{eqfhrf}),
(\ref{eqvs}), and
(\ref{eqvs-}), the following results link polynomial and rational interpolation and multipoint 
evaluation to each other, 
multiplication of Van\-der\-monde and Cauchy matrices by a vector, and
the solution of Van\-der\-monde and Cauchy linear systems of equations 
(cf. \cite[Chapter 3]{P01}).
By using this link we can extend our results
on Van\-der\-monde and Cauchy matrices to polynomial and rational
evaluation and interpolation, respectively.

\begin{theorem}\label{thevint}
(i) Let $p(x)=\sum_{i=0}^{n-1}p_ix^i$, ${\bf p}=(p_i)_{i=0}^{n-1}$,
 ${\bf s}=(s_i)_{i=0}^{n-1}$, and ${\bf v}=(v_i)_{i=0}^{n-1}$ .
Then the equations $p(s_{i})=v_i$ hold for $i=0,1,\dots,n-1$
if and only if $V_{\bf s}{\bf p}={\bf v}$.
(ii) For a rational function $v(x)=\sum_{j=0}^{n-1}\frac{u_j}{x-t_j}$
with $n$ distinct poles $t_0,\dots,t_{n-1}$ and for $n$
distinct scalars $s_0,\dots,s_{n-1}$, write 
${\bf s}=(s_i)_{i=0}^{n-1}$, ${\bf t}=(t_j)_{j=0}^{n-1}$,
${\bf u}=(u_j)_{j=0}^{n-1}$, ${\bf v}=(v_i)_{i=0}^{n-1}$.
Then the equations $v_i=v(s_i)$ hold  for $i=0,\dots,n-1$ if and only if
$C_{\bf s,t}{\bf u}={\bf v}$.
(iii) The equation $t(x)=\prod_{j=0}^{n-1}(x-t_j)=x^{n}+w(x)$, 
 for $n$ distinct knots $t_0,\dots,t_{n-1}$,
is equivalent
to the 
linear systems of $n$ equations
$w(t_j)=-t_j^n$ or 
 $n+1$  equations $t(0)=(-1)^n\prod_{j=0}^{n-1}t_j$, $t(t_j)=0$ for $j=0,\dots,n-1$
in both cases,
that is to polynomial interpolation for the vectors $(-t_j)_{j=0}^{n-1}$ and
$(t(0),0,\dots,0)^T$, respectively (cf. Example \ref{exln}).
\end{theorem}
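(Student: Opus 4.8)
The plan is to verify each of the three equivalences by direct substitution, since all three are essentially unravelling the definitions of the relevant structured matrices. For part (i), I would write out the $i$th entry of the vector $V_{\bf s}{\bf p}$. By the definition of the Vandermonde matrix $V_{\bf s}=(s_i^j)_{i,j=0}^{n-1}$ in Table \ref{t1}, the $i$th coordinate of $V_{\bf s}{\bf p}$ equals $\sum_{j=0}^{n-1}s_i^jp_j=p(s_i)$. Hence the vector identity $V_{\bf s}{\bf p}={\bf v}$ holds if and only if $p(s_i)=v_i$ for every $i$, which is exactly the claim. This is a one-line computation and needs no further machinery.

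For part (ii), I would argue identically. The $i$th coordinate of $C_{\bf s,t}{\bf u}$, using $C_{\bf s,t}=(1/(s_i-t_j))_{i,j=0}^{n-1}$ from Table \ref{t1}, equals $\sum_{j=0}^{n-1}\frac{u_j}{s_i-t_j}=v(s_i)$. The hypothesis that the $t_j$ are distinct guarantees that $v(x)$ is well defined at each $s_i$ (and that the partial-fraction form is unambiguous), while the hypothesis that the $s_i$ are distinct is not actually needed for the equivalence as stated but keeps the evaluation points honest. Again the equivalence $C_{\bf s,t}{\bf u}={\bf v}\iff v(s_i)=v_i\ (\forall i)$ follows by reading off coordinates.

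For part (iii), I would first observe that writing $t(x)=\prod_{j=0}^{n-1}(x-t_j)$ as $x^n+w(x)$ simply defines $w$ to be the degree $\le n-1$ part of $t$; since $t$ is monic of degree $n$, this is consistent, and the coefficient vector of $w$ has length $n$. Now evaluate at a root: $t(t_j)=0$ gives $t_j^n+w(t_j)=0$, i.e. $w(t_j)=-t_j^n$, for $j=0,\dots,n-1$. Conversely, if $w$ has degree $\le n-1$ and satisfies these $n$ interpolation conditions, then $x^n+w(x)$ and $\prod_j(x-t_j)$ are two monic degree-$n$ polynomials agreeing at the $n$ distinct points $t_0,\dots,t_{n-1}$, hence equal. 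By part (i) this linear system in the coefficients of $w$ is precisely $V_{\bf t}{\bf w}=(-t_j^n)_{j=0}^{n-1}$, i.e. polynomial interpolation at the vector $(-t_j)_{j=0}^{n-1}$ — note the notational subtlety that the right-hand side is $(-t_j^n)_j$, which I should state carefully to match the paper's phrasing. For the alternative $(n+1)$-equation formulation, the conditions $t(t_j)=0$ for $j=0,\dots,n-1$ together with the normalization $t(0)=(-1)^n\prod_j t_j$ pin down a polynomial of degree $\le n$ by $n+1$ interpolation conditions, and one checks $\prod_j(x-t_j)$ satisfies all of them, giving uniqueness; this is again part (i) applied to the $(n+1)$-point Vandermonde system with data vector $(t(0),0,\dots,0)^T$.

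The only mild subtlety — not really an obstacle — is bookkeeping in part (iii): keeping straight which polynomial has degree $\le n-1$ versus $\le n$, and being precise that "polynomial interpolation for the vector $(-t_j)_{j=0}^{n-1}$" refers to interpolation data, with the actual linear system reading $V_{\bf t}{\bf w}=-(t_j^n)_{j=0}^{n-1}$. Everything else is immediate from the matrix definitions in Table \ref{t1} and the counting-of-interpolation-conditions argument.
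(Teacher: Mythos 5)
Your proposal is correct and is exactly the intended argument: the paper states Theorem \ref{thevint} without proof (deferring to \cite[Chapter 3]{P01}), and the standard justification is precisely your coordinate-wise reading of $V_{\bf s}{\bf p}$ and $C_{\bf s,t}{\bf u}$ plus the uniqueness of a monic degree-$n$ polynomial agreeing with $t(x)$ at the $n$ distinct knots (respectively of the degree-$\le n$ interpolant at the $n+1$ points) in part (iii). Your side remark that the interpolation data in the first system of part (iii) is really $(-t_j^n)_{j=0}^{n-1}$, i.e.\ $V_{\bf t}{\bf w}=-(t_j^n)_{j=0}^{n-1}$, is an accurate reading of the paper's compressed phrasing.
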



\subsection{Functional transformations of matrix structures
and computations with generalized Cauchy matrices}\label{sftms}


Our algorithms of this section fall into the general framework 
of the FMM, and we can incorporate their
 modifications and extensions known in the FMM literature,
such as application of
Lagrange interpolation instead of Taylor expansion (cf.  \cite{DGR96}, \cite{B10})
 and the extension to generalized input classes.
Toward the latter extension,  consider Cauchy matrices $C_{\bf s,t}$ as 
a discrete
representation
of the function $\frac{1}{s-t}$, transform this function  
into various other functions of the variable $s-t$ 
such as  $a+\frac{b}{s-t-c}$, $\frac{1}{(s-t)^2}$, 
and $\ln (s-t)$, and arrive at various {\em generalized Cauchy matrices} such as
$\big (a+\frac{b}{s_i-t_j-c} \big )_{i,j=0}^{n-1}$, $\big (\frac{1}{(s_i-t_j)^2}\big )_{i,j=0}^{n-1}$, 
and $\big (\ln (s_j-t_j))_{i,j=0}^{n-1}$. 
We can readily extend to these matrices
 the FMM/HSS algorithms and complexity estimates
 (cf. \cite{GR87}, \cite{DGR96}).
In particular observe that $\frac{1}{(s-t)^2}=\frac{1}{(s-c)^2}\frac{1}{(1-q)^2}$, 
$\frac{az+b}{z-h}=a+\frac{b+ah}{z-h}$,
 and $\ln (s-t)=\ln (s-c)+\ln (1-q)$
 for $q=\frac{t-c}{s-c}$. Let us sketch an
 application to polynomial computations.


\begin{example}\label{exln}
Represent the polynomial $t(x)=\prod_{j=0}^{n-1}(x-t_j)$ of Section \ref{scvm} 
and part (c) of Theorem \ref{thevint} as 
$\exp(\sum_{j=0}^{n-1}\ln(x-t_j))$ and approximate its values 
$t(s_i)=\exp(\sum_{j=0}^{n-1}\ln(s_i-t_j))$
at
the $n$th roots of unity $s_i=\omega^i$ by using the FMM/HSS
techniques. Now apply IDFT to the computed approximations to the 
 values $v(\omega^i)=t(\omega^i)-1$ of the 
polynomial $v(x)=t(x)-x^n$ of a degree at most $n-1$ to 
 approximate the coefficients
of the polynomials $v(x)$ and consequently $t(x)$. 
\end{example}


\subsection{Extensions of Theorem \ref{thdgr} and
approximate solution of Toeplitz and Hankel linear systems of equations}\label{stls}


If all knots $s_i$ and $t_j$ of a Cauchy matrix $C_{\bf s,t}$ lie on the real line $\{z:~\Im(z)=0\}$,
then Theorem \ref{thdgr} 
 decreases the bounds of Theorem \ref{thcv} 
on $\alpha_{\epsilon}(C)$ and $\beta_{\epsilon}(C)$
by factors of $\log(\rho)$ and  $\log^2(\rho)$, respectively,
and decreases also the $\epsilon$-rank  $\rho$ 
of the off-diagonal blocks from order of $\log(n/\epsilon)$
to  $O(\log(1/\epsilon))$, bounding it just in terms of the
error tolerance $\epsilon$.
Next we apply functional transformations of Cauchy matrices to 
extend Theorem \ref{thdgr} to cover the cases where
 the $2n$ knots can lie on  
 any line or circle on the complex plane.  


\begin{theorem}\label{thanyl}
Theorem \ref{thdgr} holds 
for a Cauchy matrix $C_{\bf s,t}$ where all knots $s_i$ and $t_j$
lie on any
line on the complex plane.
\end{theorem}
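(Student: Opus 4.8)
The plan is to reduce the case of an arbitrary line to the case of the real line, which is covered by Theorem \ref{thdgr}, by means of a \emph{functional transformation of the Cauchy structure}. The key observation is that any line $\ell$ in the complex plane is the image of the real line $\{z:\Im(z)=0\}$ under an affine map $z\mapsto \alpha z+\beta$ with $\alpha,\beta\in\mathbb C$, $\alpha\neq 0$; concretely, if $\ell$ passes through a point $\beta$ in direction $\alpha/|\alpha|$, we may write every knot as $s_i=\alpha\sigma_i+\beta$ and $t_j=\alpha\tau_j+\beta$ with $\sigma_i,\tau_j$ real. First I would substitute this into the Cauchy entries and note that
\begin{equation}\label{eqaffpull}
\frac{1}{s_i-t_j}=\frac{1}{\alpha(\sigma_i-\tau_j)}=\frac{1}{\alpha}\cdot\frac{1}{\sigma_i-\tau_j},
\end{equation}
so that $C_{\bf s,t}=\frac{1}{\alpha}C_{\boldsymbol\sigma,\boldsymbol\tau}$, a scalar multiple of a Cauchy matrix with \emph{real} knots $\sigma_i$, $\tau_j$. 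This is exactly the ``generalized Cauchy matrix'' manipulation sketched in Section \ref{sftms}: the shift by $\beta$ and the rescaling by $\alpha$ are the functional transformations $\frac{1}{s-t}\mapsto\frac{1}{(s-\beta)-(t-\beta)}\mapsto\frac{1}{\alpha}\cdot\frac{1}{\sigma-\tau}$.

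Next I would transfer the complexity and accuracy statements across \eqref{eqaffpull}. For the matrix-vector product, multiplying by the real-knot Cauchy matrix $C_{\boldsymbol\sigma,\boldsymbol\tau}$ costs $O(n\log(1/\epsilon))$ flops by Theorem \ref{thdgr}, and scaling the result by $1/\alpha$ adds $n$ flops; since $C_{\bf s,t}{\bf u}=\frac{1}{\alpha}(C_{\boldsymbol\sigma,\boldsymbol\tau}{\bf u})$, the norm error is scaled by $|1/\alpha|$, which we absorb by running the real-knot algorithm to tolerance $|\alpha|\epsilon$ (still $O(n\log(1/\epsilon))$ flops since $\log(|\alpha|/\epsilon)=O(\log(1/\epsilon))$ once $|\alpha|$ is treated as an input-dependent constant; if one prefers a fully uniform bound one normalizes the direction vector so that $|\alpha|=1$, i.e.\ one parametrizes $\ell$ by arclength, and then $1/\alpha$ is merely a unit-modulus phase and introduces no error inflation at all). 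For the linear system, $C_{\bf s,t}{\bf x}={\bf u}$ is equivalent to $C_{\boldsymbol\sigma,\boldsymbol\tau}{\bf x}=\alpha{\bf u}$, whose approximate solution Theorem \ref{thdgr} delivers in $O(n\log(1/\epsilon))$ flops; nonsingularity of $C_{\bf s,t}$ is equivalent to that of $C_{\boldsymbol\sigma,\boldsymbol\tau}$ by \eqref{eqaffpull}, so the hypothesis carries over, and the output error is governed by Theorem \ref{thpert} exactly as in the real case (again with the harmless rescaling of the right-hand side). Numerical stability is inherited because the only new operations are a multiplication of the knots by a fixed affine map and a single scalar multiplication of a vector.

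I expect the main obstacle to be not the algebra but the bookkeeping of \emph{what exactly ``Theorem \ref{thdgr} holds''} should mean for a non-real line: one must decide whether the constant hidden in $O(n\log(1/\epsilon))$ is allowed to depend on the line (through $|\alpha|$, equivalently through a lower bound on $\delta_{c,\mathbb S}$ in the notation of Corollary \ref{colrbap}) or whether one insists on a line parametrized by arclength so that the reduction is an isometry. The cleanest route — and the one I would actually write — is the arclength parametrization: then $|\alpha|=1$, equation \eqref{eqaffpull} becomes $C_{\bf s,t}=\bar\alpha^{-1}\cdot\ldots$ with $|\alpha^{-1}|=1$, the reduction preserves all norms exactly, and the statement of Theorem \ref{thdgr} transfers verbatim with the same flop count and the same $\epsilon$. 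The translation by $\beta$ is irrelevant to Cauchy entries (differences are translation-invariant), so only the rotation matters, and a rotation of both knot sets is a unitary change that leaves $C_{\bf s,t}$ equal to a unit-modulus multiple of a real-knot Cauchy matrix. Everything else is a direct invocation of Theorem \ref{thdgr}.
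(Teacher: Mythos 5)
Your proposal is correct and follows essentially the same route as the paper: translate the knots by the line's offset and rotate by a unit-modulus factor so that they become real, observe that this changes $C_{\bf s,t}$ only by a unit-modulus scalar multiple, and then invoke Theorem \ref{thdgr} for the real-knot Cauchy matrix. Your normalization to $|\alpha|=1$ (arclength parametrization) is exactly the paper's choice $a=\exp(\phi\sqrt{-1})$, so the arguments coincide.
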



\begin{proof}  
Begin with the following observations
where $a\neq 0$ and $c$ are two complex constants,

\begin{equation}\label{eqsbtr}
C_{\bf s,t}=C_{\bf s',t'}~
{\rm where}~s'_i=s_i-c,~t'_j=t_j-c~{\rm for~all}~i~{\rm and}~j, 
\end{equation}

\begin{equation}\label{eqscl}
C_{\bf s,t}=aC_{\bf s',t'}~
{\rm where}~a\neq 0,~s'_i=s_i/a,~t'_j=t_j/a~{\rm for~all}~i~{\rm and}~j, 
\end{equation}
Now suppose  all knots $s_i$ and $t_j$ lie on a
line obtained by rotating the real line by an angle $\phi$
followed by the shift by a complex $c$.
Define the new knots $s'_i=(s_i-c)/a$
and $t'_j=(t_j-c)/a$ for $a=\exp(\phi\sqrt{-1})$ and all $i$ and $j$.
They lie on the real line. Apply Theorem \ref{thdgr} 
to the matrix $C_{\bf s',t'}$, and apply equations (\ref{eqsbtr}) and (\ref{eqscl})
to extend the resulting approximations to the matrix $C_{\bf s,t}$.
\end{proof}

\begin{remark}\label{revndsc}
Equations (\ref{eqsbtr}) and (\ref{eqscl}) show low impact of 
shift and scaling of the knots of Cauchy matrices,  
in sharp contrast to the impact of shift and scaling 
of the knots of Van\-der\-monde matrices.
\end{remark}


\begin{theorem}\label{thanyc}
Assume a positive tolerance $\epsilon<1$ and an $n\times n$ 
 Cauchy matrix $C_{\bf s,t}$ with all $2n$ knots $s_i$ and $t_j$
lying on a circle
 on the complex plane. 
Then  (cf. Remark \ref{reftr})
(i) $\alpha_{\epsilon}(C)=O(n\log (1/\epsilon))$, whereas 
(ii) $\beta_{\epsilon}(C)=O(n\log(n/\epsilon))$.
\end{theorem}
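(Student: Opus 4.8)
The plan is to deduce the circle case from the real-line case of Theorem~\ref{thdgr} (equivalently Theorem~\ref{thanyl}), handling parts~(i) and~(ii) by different routes: for multiplication the adaptive FMM can be run directly on the circle, whereas for the solve one first flattens the circle onto a line by a M\"obius change of the knots, and it is this flattening that costs the extra logarithmic factor. By the scaling and shift identities (\ref{eqsbtr})--(\ref{eqscl}) we may assume with no loss of generality that the circle is the unit circle, so that $s_i=\exp(\sigma_i\sqrt{-1})$ and $t_j=\exp(\tau_j\sqrt{-1})$ for real $\sigma_i,\tau_j$.

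For part~(i) I would apply the FMM/HSS machinery of Section~\ref{shssapr} directly to the partition of the unit circle into arcs, as in Theorem~\ref{thlrba}, but using the local approximation of~\cite{DGR96} (Lagrange interpolation at Chebyshev nodes along each arc) in place of the cruder estimate behind Corollary~\ref{colrbap}. Since a circle is a bounded smooth curve, the adaptive FMM of~\cite{CGR98} and the analysis of~\cite{DGR96} apply essentially verbatim; paralleling the real-knot case (cf.\ Remark~\ref{redlt}), each admissible block then has $\epsilon$-rank $O(\log(1/\epsilon))$ independent of $n$, the extended-diagonal blocks carry $O(n\log(1/\epsilon))$ entries in total, and summing the cost over the $O(\log n)$ merging levels telescopes to $O(n\log(1/\epsilon))$ flops, exactly as in Theorem~\ref{thdgr}. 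The only point new relative to Theorem~\ref{thdgr} is the topological wraparound of the circle, i.e.\ treating the first and last arcs as adjacent, which is already built into Theorem~\ref{thlrba} and Remark~\ref{reglu}.

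For part~(ii) I would pass to the real line. Choose a point $c$ on the unit circle distinct from all $2n$ knots, preferably at the midpoint of a longest knot-free arc, so that $|s_i-c|$ and $|t_j-c|$ exceed a quantity of order $1/n$ for every $i$ and $j$, and apply the Cayley-type map carrying the unit circle to a line with its pole at $c$; after a harmless further rotation and shift (again by (\ref{eqsbtr})--(\ref{eqscl})) the resulting knots $x_i,y_j$ are real. Computing $s_i-t_j$ in terms of $x_i,y_j$ yields an identity $C_{\bf s,t}=D_1\,C_{\bf x,y}\,D_2$ with $D_1,D_2$ diagonal, their entries rational in $s_i,t_j$ (a functional transformation in the sense of Section~\ref{sftms}), and $C_{\bf x,y}$ a Cauchy matrix with real knots. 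Hence $C_{\bf s,t}^{-1}=D_2^{-1}C_{\bf x,y}^{-1}D_1^{-1}$, so to solve $C_{\bf s,t}{\bf x}={\bf u}$ one solves $C_{\bf x,y}{\bf y}=D_1^{-1}{\bf u}$ by the algorithm of Theorem~\ref{thdgr} and returns ${\bf x}=D_2^{-1}{\bf y}$, at an extra cost of $O(n)$ flops. The $\log n$ factor is lost to conditioning: $C_{\bf x,y}=D_1^{-1}C_{\bf s,t}D_2^{-1}$ can be worse conditioned than $C_{\bf s,t}$ by the factor $\cond(D_1)\cond(D_2)$, and because $c$ lies on the circle that already carries $2n$ knots it is within distance $O(1/n)$ of some knot, so $\cond(D_1)$ and $\cond(D_2)$ can be as large as of order $n$. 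Thus the inner solver must be run to tolerance of order $\epsilon$ divided by a fixed power of $n$, i.e.\ with local approximations of rank $O(\log(n/\epsilon))$; using Theorem~\ref{thpert} to convert this matrix perturbation into a forward-error bound on the solution gives $\beta_\epsilon(C)=O(n\log(n/\epsilon))$.

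The step I expect to be the main obstacle is the error and conditioning bookkeeping for the diagonal multipliers $D_1$ and $D_2$ in part~(ii): bounding $\|D_i\|$ and $\cond(D_i)$ sharply in terms of the minimal knot-to-center distance, and confirming that the extra $\log n$ is genuinely needed for the solve while the multiplication of part~(i) escapes it (that the HSS-based solve cannot be run natively on the circle at the same cost). A more routine secondary point is to check that the rational change of variable leaves the adaptive FMM machinery of~\cite{DGR96} and~\cite{CGR98} applicable and to handle the wraparound of the circle, both of which are covered by the discussion around Theorem~\ref{thlrba} and Remark~\ref{regdn}.
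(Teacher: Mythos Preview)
Your treatment of part~(ii) is essentially the paper's: reduce to the unit circle, apply a Cayley-type map $z\mapsto\frac{z+a}{z-a}\sqrt{-1}$ to send the circle to the real line, obtain $C_{\bf s,t}=D_1C_{\bf s',t'}D_2$ with diagonal $D_1,D_2$ and real-knot $C_{\bf s',t'}$, and invoke Theorem~\ref{thdgr}; the $\log n$ enters because the best-placed $a$ still lies within distance of order $1/n$ of some knot, so $\|D_i\|$ can be of order $n$.

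For part~(i) the paper takes a different and more elementary route than yours. It does \emph{not} run the FMM natively on the circle; instead it keeps the same M\"obius reduction but removes the bad conditioning by an additive three-piece decomposition. The unit circle is split into three arcs $\mathcal A_0,\mathcal A_1,\mathcal A_2$ of length $2\pi/3$, and $C$ is written as $C_0+C_1+C_2$ where each $C_h$ omits the entries whose knots lie in $\mathcal A_h$; for $C_h$ one chooses $a=a(h)$ at the midpoint of $\mathcal A_h$, so the center-to-knot distance is bounded below by an absolute constant (here $\delta_-\ge 1$), whence $\|D_i\|=O(1)$ and Theorem~\ref{thdgr} gives $\alpha_\epsilon(C_h)=O(n\log(1/\epsilon))$ directly. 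Your approach---carry the adaptive FMM of \cite{DGR96} over to the circle---is plausible and morally correct, but your cost accounting has a gap: the recursive-merging framework of Theorem~\ref{thlrba} and Theorem~\ref{thcv} yields $O(n\rho\log n)$, not $O(n\rho)$, because each knot participates in $O(\log n)$ levels; nothing ``telescopes''. To reach $O(n\log(1/\epsilon))$ you would need the single-pass FMM bookkeeping of \cite{DGR96}, which you have invoked but not actually set up on the circle. The paper's three-arc trick sidesteps this by reducing to Theorem~\ref{thdgr} as a black box.
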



\begin{proof}  
Combine
equations (\ref{eqsbtr}) and (\ref{eqscl}) 
to reduce the proof to the case where the $2n$ knots lie on the unit circle
$\{z:~|z|=1\}$. Then fix any complex $a$ such that
$|a|=1$ and recall that the function $\frac{z}{a}=1+\frac{2\sqrt {-1}}{z'-\sqrt {-1}}$
and its converse $z'=\frac{z+a}{z-a}\sqrt {-1}$ transform the real line into
this unit circle and vice versa.
Now  write  $s_i'=\frac{s_i+a}{s_i-a}\sqrt {-1}$
and $t_j'=\frac{t_j+a}{t_j-a}\sqrt {-1}$ for all $i$ and $j$
 and obtain that all knots $s'_i$ and $t'_j$ are real, whereas
 $s_i=a(1+\frac{2\sqrt {-1}}{s_i'-\sqrt {-1}})$,
$t_j=a(1+\frac{2\sqrt {-1}}{t_j'-\sqrt {-1}})$, 
$s_i-t_j=
2a\frac{s'_i-t'_j}{(s_i'-\sqrt {-1})(t_j'-\sqrt {-1})\sqrt {-1}}$, and
consequently 
$\frac{1}{s_i-t_j}=\frac{u_iv_j}{s'_i-t'_j}$ for $u_i=\frac{\sqrt {-1}}{2a}(s_i'-\sqrt {-1})$ and
$v_j=t_j'-\sqrt {-1}$. It follows that 
the Cauchy matrix $C=C_{\bf s,t}$ satisfies 
\begin{equation}\label{eqsclshr}
C=\diag(\widehat u_i)_{i=0}^{n-1}C_{\bf s',t'}\diag(v_j)_{j=0}^{n-1}~{\rm for}~
{\bf s'}=(s'_i)_{i=0}^{n-1}~{\rm and}~{\bf t'}=(t'_j)_{j=0}^{n-1}.
\end{equation} 
Apply Theorem \ref{thdgr} to the matrix $C_{\bf s',t'}$
and deduce that $\alpha_{\epsilon}(C)=O(n\log (1/\epsilon'))$ and
$\beta_{\epsilon}(C)=O(n\log(1/\epsilon'))$
 for $\epsilon'\le u~v~\epsilon$, $u=\max_{i=0}^{n-1}|u_i|$
and $v=\max_{j=0}^{n-1}|v_j|$.
Recall that $|a|=1$ and deduce that $|u_i|\le 0.5(|s'_i|+1)$
and $|v_j|\le (|t'_j|+1)$ for all $i$ and $j$. Recall that $|s_i|=|t_j|=1$
and so $|s'_i|\le 2/|s_i-a|$ and $|t'_j|\le 2/|t_j-a|$
for all $i$ and $j$. Choose a point $a$ on the unit circle
lying at the maximal distance $\delta\ge \delta_-=2\sin (0.25 \pi/n)$
from the set of the $2n$ knots
$\{s_0,\dots,s_{n-1},t_0,\dots,t_{n-1}\}$. It follows that $\delta_-\ge 2/n$ for $n>3$. 
Consequently 
$u\le 0.5(\frac{2}{\delta}+1)\le 0.5(n+1)$, $v\le \frac{2}{\delta}+1 \le n+1$,  and 
$\epsilon'\le 0.5(\frac{2}{\delta}+1)^2~\epsilon\le 0.5(n+1)^2~\epsilon$ for $n>3$, 
which implies the bounds
$\alpha_{\epsilon}(C)=O(n\log (n/\epsilon))$ and
$\beta_{\epsilon}(C)=O(n\log(n/\epsilon))$.

To decrease the bound on $\alpha_{\epsilon}(C)$,
partition the unit circle $\{z:~|z|=1\}$ into three semi-open arcs
$\mathcal A_h=\{\exp(\phi\sqrt{-1}):~2\pi h/3\le \phi<2\pi (h+1)/3\}$, 
each of length $2\pi/3$, for $h=0,1,2$,
and write 
$C_{h}=(c_{i,j}^{(h)})_{i,j=0}^{n-1}$, $c_{i,j}^{(h)}=0$ if $s_i,t_j\in \mathcal A_h$,
$c_{i,j}^{(h)}=\frac{1}{s_i-t_j}$ otherwise, for
$h=0,1,2$.   
We can estimate $\alpha_{\epsilon}(C_h)$ by applying our previous
argument, but now we choose $a=a(h)$ being the midpoint of the arc 
$\mathcal A_h$ and observe that in this case $\delta_-$ increases to 1,
which implies that $\epsilon'\le 0.5(\frac{2}{\delta}+1)^2~\epsilon\le 4.5~\epsilon$
and $\alpha_{\epsilon}(C_h)=O(n\log (1/\epsilon))$ for $h=0,1,2$. Finally observe that 
$C=C_0+C_1+C_2$  and obtain that  
$\alpha_{\epsilon}(C)\le \sum_{h=0}^2 \alpha_{\epsilon}(C_h)=
O(n\log (1/\epsilon))$.
\end{proof}


We immediately extend the bound on 
$\alpha_{\epsilon}(C)$
to the case of Cauchy-like matrices $M$ 
of part (c) of Theorem \ref{thdexpr} 
with the knots on a line or a circle. Namely we
combine equation (\ref{eqclk})
with part (i) of Theorem \ref{thanyc} 
and obtain that
\begin{equation}\label{eqcvlka}
\alpha_{\epsilon'}(M)=O(dn\log(1/\epsilon))
\end{equation}
where $\epsilon'$ and $\epsilon$ are linked by equation (\ref{eqclk}).
To estimate $\beta_{\epsilon}(M)$
for such matrices $M$ we first observe the bound of order
$O(\log(1/\epsilon))$ on the $\epsilon$-rank of the above matrices
$C_0$, $C_1$, $C_2$ and  $C_{\bf s,t}=C_0+C_1+C_2$.
It follows that $\beta_{\epsilon}(C_{\bf s,t})=O(n\log(n)\log^2(1/\epsilon))$
(cf. Theorem \ref{thcv}).
Apply the techniques of  Section \ref{sextcvvt}
to extend this estimate 
to  Cauchy-like matrices $M$ 
as follows.


\begin{corollary}\label{coclk}
Suppose $M$ is a Cauchy-like matrix of part (c) of Theorem \ref{thdexpr} 
having its knots on a line or a circle.
Then $\alpha_{\epsilon}(M)=O(nd\log(1/\epsilon'))$ 
and $\beta_{\epsilon''}(M)=O(nd^2\log(n)\log^2(1/\epsilon))$
for $\epsilon'$ and $\epsilon''$
defined in Section \ref{sextcvvt}.
\end{corollary}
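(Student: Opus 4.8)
The plan is to follow exactly the reduction template already used in Section~\ref{sextcvvt}, combining the block-splitting idea from the proof of Theorem~\ref{thanyc} with the displacement-generator machinery of part~(c) of Theorem~\ref{thdexpr}. Concretely: write $M=\sum_{j=1}^d \diag({\bf f}_j)\,C_{\bf s,t}\,\diag({\bf g}_j)$ as in part~(c) of Theorem~\ref{thdexpr}, so that every computation with $M$ reduces to $d$ computations with the single underlying Cauchy matrix $C=C_{\bf s,t}$ plus $O(nd)$ flops for the diagonal scalings, and the output error norm is inflated by at most the factor $\sum_{j=1}^d|{\bf f}_j|\,|{\bf g}_j|\le d|F|\,|G|$ recorded in~(\ref{eqclk}). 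Since all knots of $M$ lie on a line or circle, the same is true of $C$, so Theorem~\ref{thanyc}(i) gives $\alpha_\epsilon(C)=O(n\log(1/\epsilon))$; substituting into~(\ref{eqclk}) yields $\alpha_{\epsilon'}(M)=O(nd\log(1/\epsilon'))$ with $\epsilon'$ as in~(\ref{eqclk}), which is the first claimed bound and is already recorded as~(\ref{eqcvlka}).

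For the bound on $\beta_{\epsilon''}(M)$ I would first pin down the $\epsilon$-rank of $C$ itself. The key point, noted just before the corollary, is that the three matrices $C_0,C_1,C_2$ obtained by zeroing out the entries with both knots in a common third of the line/circle each have $\epsilon$-rank $O(\log(1/\epsilon))$ — this is exactly what the proof of Theorem~\ref{thanyc} establishes when it chooses the center $a=a(h)$ at the midpoint of the arc $\mathcal A_h$, forcing $\delta_-\ge 1$ and hence $\epsilon'=O(\epsilon)$ in Theorem~\ref{thss1}. Since $C=C_0+C_1+C_2$, every off-diagonal block of $C$ has $\epsilon$-rank at most $3\cdot O(\log(1/\epsilon))=O(\log(1/\epsilon))$, a bound depending only on $\epsilon$ and not on $n$. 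Feeding this $\epsilon$-rank $\rho=O(\log(1/\epsilon))$ into the recursive-merging HSS solver of Theorem~\ref{thcv} (whose cost is $O(n\rho^2\log n)$) gives $\beta_\epsilon(C)=O(n\log(n)\log^2(1/\epsilon))$.

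It remains to transfer this from $C$ to the Cauchy-like $M$. Here I would repeat verbatim the argument of Section~\ref{sextcvvt}: in the expression $M=\sum_{j=1}^d\diag({\bf f}_j)C\diag({\bf g}_j)$ the ranks of the local approximation blocks grow by at most a factor $d$ when passing from $C$ to $M$, so the effective off-diagonal $\epsilon$-rank for $M$ is $O(d\log(1/\epsilon))$; plugging this into the $O(n\rho^2\log n)$ HSS-solver cost produces $O(nd^2\log(n)\log^2(1/\epsilon))$ flops. Finally, Theorem~\ref{thpert} converts the approximation-error bound on $M$ into one on $M^{-1}$, at the cost of replacing $\epsilon'$ by $\epsilon''=O(n\epsilon'\|M^{-1}\|)$ as in~(\ref{eqepsl}); this does not change the asymptotic flop count. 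Assembling these gives $\alpha_\epsilon(M)=O(nd\log(1/\epsilon'))$ and $\beta_{\epsilon''}(M)=O(nd^2\log(n)\log^2(1/\epsilon))$ with $\epsilon',\epsilon''$ as defined in Section~\ref{sextcvvt}, which is the corollary.

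The only genuinely substantive step is the rank bound for $C$ on a circle, and that has already been isolated inside the proof of Theorem~\ref{thanyc} — so for this corollary the main obstacle is essentially bookkeeping: making sure the error-inflation factors from the three reductions (the diagonal-scaling reduction~(\ref{eqclk}), the circle-to-line Möbius reduction~(\ref{eqsclshr}), and the perturbation bound~(\ref{eqepsl})) are composed in the right order so that the stated $\epsilon'$ and $\epsilon''$ are exactly the ones defined in Section~\ref{sextcvvt}, and that the well-conditioning hypothesis implicit in the HSS solver (no singular or ill-conditioned auxiliary matrices arising during merging, cf.\ the discussion preceding Theorem~\ref{thcv}) is carried along.
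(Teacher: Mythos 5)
Your treatment of the first bound and of the transfer from $C$ to the Cauchy-like matrix $M$ matches the paper: $\alpha_{\epsilon'}(M)$ is obtained by combining (\ref{eqclk}) with part (i) of Theorem \ref{thanyc} (this is (\ref{eqcvlka})), and the passage to $\beta_{\epsilon''}(M)$ uses the factor-$d$ inflation of local ranks and the error adjustments (\ref{eqclk}), (\ref{eqepsl}), Theorem \ref{thpert}, fed into the cost bounds of Theorem \ref{thcv}. The gap is in your middle step, where you assert that each of the whole matrices $C_0,C_1,C_2$ has $\epsilon$-rank $O(\log(1/\epsilon))$ and that ``this is exactly what the proof of Theorem \ref{thanyc} establishes'' via Theorem \ref{thss1} with center $a(h)$ and $\delta_-\ge 1$. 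That is not what that proof does, and the assertion is false. In the proof of Theorem \ref{thanyc} the choice of $a(h)$ and the bound $\delta_-\ge 1$ serve only to bound the diagonal scaling factors $u_i,v_j$ of (\ref{eqsclshr}), i.e.\ the error inflation incurred when Theorem \ref{thdgr} is applied to the real-knot matrix $C_{\bf s',t'}$ after the change of variables; no global low-rank approximation of $C_h$ is produced, and none exists. The two knot sets defining the nonzero part of $C_h$ both lie at distances between $1$ and $2$ from $a(h)$, so they are not $(\theta,a(h))$-separated for any $\theta<1$ and Theorem \ref{thss1} is not applicable; concretely, $C_h$ contains large Cauchy submatrices whose row and column knots interleave on one of the remaining arcs, and such matrices have numerical rank growing with their size (for instance, a quasiunitary-scaled FCF matrix $C_{e,f}$ with $|e|=|f|=1$ has all $2n$ knots on the unit circle and full numerical rank $n$). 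Hence your inference ``whole $C_h$ of low rank, therefore every off-diagonal block of $C=C_0+C_1+C_2$ has $\epsilon$-rank $O(\log(1/\epsilon))$'' collapses at its premise.

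What the paper actually relies on (see Remark \ref{reftr} together with Remark \ref{redlt}) is an $O(\log(1/\epsilon))$ bound on the $\epsilon$-ranks of the \emph{admissible blocks} of the HSS partition of $C$, not of the matrices $C_h$ as a whole: the functional (fractional linear) transform of the proof of Theorem \ref{thanyc} is applied blockwise, mapping an admissible block, whose two knot sets are separated on the circle, to a diagonally scaled Cauchy block with real knots, for which the analysis of \cite{DGR96} with constant $\delta_-$ (cf.\ Remark \ref{redlt}) yields $\epsilon$-rank $O(\log(1/\epsilon))$; the diagonal scalings of (\ref{eqsclshr}) preserve rank and only inflate the tolerance by a bounded factor, since the pole of the transform can be kept away from the knots of the block. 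With that blockwise bound in hand, $\rho=O(\log(1/\epsilon))$ is fed into Theorem \ref{thcv} exactly as you do, and the remainder of your argument (including the conditioning caveat for the merging process) agrees with the paper; it is only the justification of the rank bound that needs to be replaced.
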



Finally recall the canonical DFT-based transforms of the matrices of 
the classes $\mathcal T$ and  $\mathcal H$ into Cauchy-like matrices (see Theorem \ref{thtc})
and extend the corollary to obtain the following result.


\begin{corollary}\label{cotpl}
Suppose $M$ is a Toeplitz-like or Hankel-like matrix of parts (t)
or (h) of Theorem \ref{thdexpr}. 
Then  $\beta_{\epsilon''}(M)=O(nd^2\log(n)\log^2(1/\epsilon))$
for $\epsilon''$ and $\epsilon'$
defined in Section \ref{sextcvvt} and for $d\le 2$ in the case of Toeplitz
and Hankel matrices $M$.
\end{corollary}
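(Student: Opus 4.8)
The plan is to reduce both the Toeplitz-like and the Hankel-like cases to the Cauchy-like case already settled in Corollary~\ref{coclk}, using the canonical DFT-based transform of Theorem~\ref{thtc}, and then to check that moving back and forth through the auxiliary multipliers costs only $O(n\log n)$ flops and inflates the output error by at most a fixed power of $n$. First I would normalize the displacement of a Toeplitz-like $M$ to the form $Z_1M-MZ_{-1}=\widehat F\widehat G^T$ demanded by Theorem~\ref{thtc}: if $M$ comes in the transposed-shift form $Z_e^TM-MZ_f^T=FG^T$, then by (\ref{eqrever}) the matrix $JMJ$ satisfies $Z_e(JMJ)-(JMJ)Z_f=(JF)(JG)^T$, so replacing $M$ by $JMJ$ (a permutation similarity, $O(n)$ flops, no error) reduces to the form $Z_eM-MZ_f=FG^T$, and then Theorem~\ref{thzef} gives the form $Z_1M-MZ_{-1}$ at the cost of raising the displacement rank by at most $2$; for a genuine Toeplitz matrix part~(th) of Theorem~\ref{thdsplr} already furnishes this form with displacement rank $\le 2$, so no increase occurs, and in all cases $(d+2)^2=O(d^2)$.

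Next I would apply Theorem~\ref{thtc}, which turns $M$ into the Cauchy-like matrix $C=PMN$ with $P=\Omega$, $N=D_0^H\Omega^H$, with a displacement generator of the same length, and with Cauchy operator matrices $D_{\mathbf s}=\diag(\omega_n^i)_{i=0}^{n-1}$ and $D_{\mathbf t}=\omega_{2n}D_{\mathbf s}=\diag(\omega_{2n}^{2j+1})_{j=0}^{n-1}$; in particular all $2n$ knots of $C$ lie on the unit circle, so $C$ is a Cauchy-like matrix of part~(c) of Theorem~\ref{thdexpr} to which Corollary~\ref{coclk} applies directly, yielding a solver for $C$ costing $O(nd^2\log(n)\log^2(1/\epsilon))$ flops. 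I would then invoke the preprocessing identity (\ref{eqprepr}) in the form $M^{-1}=NC^{-1}P$: to solve $M\mathbf x=\mathbf u$ approximately, form $\mathbf v=\Omega\mathbf u$, solve $C\mathbf y=\mathbf v$ approximately, and form $\mathbf x=D_0^H\Omega^H\mathbf y$, the two DFT-based multiplications each costing $O(n\log n)$ flops by Theorem~\ref{thdft} and being dominated by the $C$-solve. This gives the claimed bound, with $d\le 2$ in the genuine Toeplitz case. For a Hankel-like $M$ I would first note, via Theorem~\ref{thhv} (equivalently Theorem~\ref{thth}), that $MJ$ is Toeplitz-like with a displacement generator of length at most $d$, and reduce solving $M\mathbf x=\mathbf u$ to solving $(MJ)(J\mathbf x)=\mathbf u$; since $J$ is a permutation this adds only $O(n)$ flops and no error, so the Hankel case inherits the Toeplitz bound, with $d\le 2$ for a genuine Hankel matrix.

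The routine part still to be carried out carefully is the accuracy bookkeeping: since $D_0^H$ and $\frac{1}{\sqrt{n}}\Omega^H$ are unitary, multiplication by $N$ (and by $P$) amplifies the Euclidean error by at most $\sqrt{n}$, and $|F_C|\le n|F|$, $|G_C|\le n|G|$ because the entries of $\Omega$ and $\Omega D_0$ are unimodular, so the attained accuracy differs from the accuracy of the $C$-solve only by a fixed power of $n$; absorbing this changes $\log(1/\epsilon'')$ by an additive $O(\log n)$, which the factor $\log n$ in the bound and the definitions of $\epsilon'$, $\epsilon''$ in Section~\ref{sextcvvt} already accommodate. I expect the actual obstacle to be not this bookkeeping but the implicit well-conditioning requirement that it shares with Theorem~\ref{thcv} and Corollary~\ref{coclk}: the recursive-merging HSS inversion behind Corollary~\ref{coclk} is valid only when no singular or ill-conditioned auxiliary matrix arises in the merging, hence --- in view of Remark~\ref{renmr} --- only when $M$, equivalently $C$, is nonsingular and well conditioned, and making this hypothesis precise and checking that it is preserved under the transform is the delicate point.
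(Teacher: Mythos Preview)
Your proposal is correct and follows essentially the same route as the paper: transform the Toeplitz-like (or Hankel-like, via $M\mapsto MJ$) matrix into a Cauchy-like matrix with all knots on the unit circle using the canonical DFT-based map of Theorem~\ref{thtc}, then invoke Corollary~\ref{coclk}. The paper's own proof is a one-sentence pointer to this reduction, whereas you have spelled out the normalization of the displacement to the $(Z_1,Z_{-1})$ form, the preprocessing identity~(\ref{eqprepr}) for pulling the solve back through $P$ and $N$, the Hankel-to-Toeplitz reduction, and the error bookkeeping; all of these details are correct and are exactly what the paper leaves implicit.
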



\begin{remark}\label{reftr}
The transform of Cauchy matrices based on the function 
$\frac{az+b}{z-h}=a+\frac{b+ah}{z-h}$ is numerically unstable
for many values of the parameters $a$, $b$  and $h$, 
but  we 
avoid numerical problems by employing
the numerically stable algorithms of 
 \cite{CGS07},
\cite{X12},
 \cite{XXG12}, and
 \cite{XXCB} and  applying 
the functional transform just to bound the numerical ranks 
of the admissible blocks of Cauchy matrices involved into the computations. Then 
application of Theorem \ref{thcv} for $\rho=O(\log(1/\epsilon))$
still yields the cost bounds $\alpha_{\epsilon'}(M)=O(nd\log(1/\epsilon))$
and $\beta_{\epsilon''}(M)=O(nd^2\log(n)\log^2(1/\epsilon))$.
\end{remark}


\subsection{$\epsilon$-ranks of admissible blocks
and the impact on implementation}\label{ssmplimpl}


Our proof of Theorem \ref{thcv}
is  constructive, that is  
we  can readily compute the centers
$c_q$ and the admissible
blocks $\widehat N_q$ of bounded ranks
throughout the merging process,  
and then we can apply the algorithms 
of the previous section. 
In practice one should avoid a large part of these
computations, however,
by following the papers \cite{CGS07},
\cite{X12},
 \cite{XXG12}, and
 \cite{XXCB}.
They bypass the computation of the centers
$c_q$ and immediately
 compute the HSS generators for the
 admissible blocks $\widehat N_q$,
defined by HSS trees. The length of the generators can be 
chosen equal to the available  
upper bound $\rho$ on the numerical ranks of these blocks
or can be adapted empirically.
Theorem \ref{thcv}  implies that the computational cost bounds $\alpha_{\epsilon}(M)$ 
and $\beta_{\epsilon}(M)$
are proportional to $\rho$ and $\rho^2$, respectively,
and thus decrease as the numerical rank $\rho$ decreases.
If bound
(\ref{eqlrbap})
on the $\epsilon$-rank decreases
to
$\rho=O(\log (1/\epsilon))$
(cf. our Remark \ref{redlt}),
then the complexity bounds of Theorem \ref{thcv}
decrease to the level
$\alpha_{\epsilon}(C)=O(n\log(1/\epsilon) \log (n))$
and $\beta_{\epsilon}(C)=O(n\log^2(1/\epsilon) \log (n))$.
By virtue of Corollary \ref{cotpl}
this is the case for the
inputs from the classes $\mathcal T$ and  $\mathcal H$,
including the case of
Toeplitz and Hankel inputs, where such bounds have been empirically 
observed in \cite{XXG12}.


\section{Conclusions}\label{scnc}


The techniques of the transformation of matrix structures based on displacement 
representation  go back to \cite{P90}, with 
surprising algorithmic applications explored 
since 1995. At first we revisited these techniques
 covering them 
 comprehensively. We 
simplified their study
by employing Sylvester's (rather than Stein's) 
displacements and
the techniques for 
operating with them from
\cite{P00} and  \cite[Section 1.5]{P01}.
Then we covered some 
fast  numerically stable approximation algorithms based on combining
these transformations with another link among distinct 
classes of structured matrices, namely  among Cauchy and HSS matrices,
the latter matrices appeared inthe study of the FMM (that is the Fast Multipole Method).  
These  efficient algorithms
approximate the solution of a nonsingular Toeplitz or Toeplitz-like 
linear system of equations in nearly linear (versus classical cubic)
arithmetic time \cite{MRT05}, \cite{CGS07}, \cite{XXG12}, and \cite{XXCB}.
Our analysis of these algorithms 
revealed their additional power, and 
we extended them to 
support nearly linear arithmetic time bounds 
(versus known quadratic)
for the approximation of the matrix-by-vector products of Van\-der\-monde,
transposed Van\-der\-monde, and CV matrices, the latter ones being a subclass of Cauchy matrices   
and for approximate solution of nonsingular linear systems of equations with these 
matrices. We noted some potential numerical limitations for the application 
of our transformations to the latter task of solving linear systems of equations
and for our algorithmic transformations of Section \ref{sgsk}
from CV matrices to Cauchy matrices with any set of knots.
We observed no such limitations, however, in extension of our results to 
the matrices of the classes $\mathcal V$, $\mathcal V^T$, and  $\mathcal {CV}$, and
we further accelerated a little
the cited  numerical
approximation algorithms 
for  Toeplitz linear systems
 by combining the algorithms of 
\cite{DGR96} for polynomial evaluation with 
 functional transformations of matrix structures.

At this point natural research challenges include (i) the search for new
links and new transformations among various classes of structured matrices 
towards
significant algorithmic applications
(possibly by combining the displacement and functional 
transformations of matrix structures with the approximation 
techniques of the FMM)
 and (ii) the refinement 
 of the presented algorithms. Our arithmetic time bounds
for $(l,u)$-HSS computations exceed the bounds for similar
computations with banded matrices by logarithmic
factors, and one may try to close these gaps
by applying the advanced techniques of the FMM.
A more specific idea towards a specific goal
is the combination of equation (\ref{eqvtinv}) and Example \ref{exln}
in order to accelerate approximate solution of Van\-der\-monde
linear systems to the level achieved for multiplication
of a transposed Van\-der\-monde matrix by a vector.

Even the acceleration to the level of Theorem \ref{thdgr}, however,
would  support  substantially inferior Boolean complexity estimates
(with the excess by a factor of $\log(1/\epsilon)$)
compared to  
 the algorithms of \cite[Main Theorem]{BP87},
\cite[Theorem 3.9 and Section 5.3]{K98},
and \cite{PT13}
for high precision polynomial 
evaluation and interpolation
and consequently for the related Van\-der\-monde
and Cauchy
matrix computations. Equation (\ref{eqfhr})  
may help to extend the latter algorithms to high precision multiplication of a
Cauchy matrix by a vector and solving a Cauchy 
linear system of equations. Such progress may 
eventually become of
interest for numerical 
computations as well, in the case of 
 sufficient support from
the field of  Computer Arithmetic (cf. \cite{P91}).

\medskip

{\bf Acknowledgements:}
Our research has been supported by NSF Grant CCF--1116736 and
PSC CUNY Awards 64512--0042 and 65792--0043. 
We also wish to thank the reviewers for thoughtful valuable comments.

\end{document}